\begin{document}
\setlength{\baselineskip}{16pt}

\parindent 0.5cm
\evensidemargin 0cm \oddsidemargin 0cm \topmargin 0cm \textheight 22.5cm \textwidth 16cm \footskip 2cm \headsep
0cm

\newtheorem{theorem}{Theorem}[section]
\newtheorem{lemma}{Lemma}[section]
\newtheorem{proposition}{Proposition}[section]
\newtheorem{definition}{Definition}[section]
\newtheorem{example}{Example}[section]
\newtheorem{corollary}{Corollary}[section]

\newtheorem{remark}{Remark}[section]

\numberwithin{equation}{section}

\def\p{\partial}
\def\I{\textit}
\def\R{\mathbb R}
\def\C{\mathbb C}
\def\u{\underline}
\def\l{\lambda}
\def\a{\alpha}
\def\O{\Omega}
\def\e{\epsilon}
\def\ls{\lambda^*}
\def\D{\displaystyle}
\def\wyx{ \frac{w(y,t)}{w(x,t)}}
\def\imp{\Rightarrow}
\def\tE{\tilde E}
\def\tX{\tilde X}
\def\tH{\tilde H}
\def\tu{\tilde u}
\def\d{\mathcal D}
\def\aa{\mathcal A}
\def\DH{\mathcal D(\tH)}
\def\bE{\bar E}
\def\bH{\bar H}
\def\M{\mathcal M}
\def\E{\mathcal{E}}
\renewcommand{\labelenumi}{(\arabic{enumi})}

\def\disp{\displaystyle}
\def\undertex#1{$\underline{\hbox{#1}}$}
\def\card{\mathop{\hbox{card}}}
\def\sgn{\mathop{\hbox{sgn}}}
\def\exp{\mathop{\hbox{exp}}}
\def\OFP{(\Omega,{\cal F},\PP)}
\newcommand\JM{Mierczy\'nski}
\newcommand\RR{\ensuremath{\mathbb{R}}}
\newcommand\CC{\ensuremath{\mathbb{C}}}
\newcommand\QQ{\ensuremath{\mathbb{Q}}}
\newcommand\ZZ{\ensuremath{\mathbb{Z}}}
\newcommand\NN{\ensuremath{\mathbb{N}}}
\newcommand\PP{\ensuremath{\mathbb{P}}}
\newcommand\abs[1]{\ensuremath{\lvert#1\rvert}}

\newcommand\normf[1]{\ensuremath{\lVert#1\rVert_{f}}}
\newcommand\normfRb[1]{\ensuremath{\lVert#1\rVert_{f,R_b}}}
\newcommand\normfRbone[1]{\ensuremath{\lVert#1\rVert_{f, R_{b_1}}}}
\newcommand\normfRbtwo[1]{\ensuremath{\lVert#1\rVert_{f,R_{b_2}}}}
\newcommand\normtwo[1]{\ensuremath{\lVert#1\rVert_{2}}}
\newcommand\norminfty[1]{\ensuremath{\lVert#1\rVert_{\infty}}}
\newcommand{\ds}{\displaystyle}

\title{Stability, bifurcation and spikes of  stationary solutions in a chemotaxis system with  singular sensitivity and logistic source}

\author{Halil Ibrahim  Kurt\\
Department of Mathematics and Statistics\\
Auburn University\\
Auburn, AL 36849, USA\\
\\
   Wenxian Shen\\
   Department of Mathematics and Statistics\\
Auburn University\\
Auburn, AL 36849, USA\\
\\
and\\
Shuwen Xue\\
Department of Mathematical Sciences\\
Northern Illinois University\\
DeKalb, IL 60115, USA
 }

\date{}

 \maketitle

\begin{abstract}

In the current paper, we study  stability, bifurcation, and spikes of positive stationary solutions of {the} following parabolic-elliptic  chemotaxis system with singular sensitivity and logistic source,
\begin{equation}
\label{abstract-eq}
\begin{cases}
u_t=u_{xx}-\chi (\frac{u}{v} v_x)_x+u(a-b u), &  0<x<L, \, t>0,\cr
0=v_{xx}- \mu v+ \nu u, &  0<x<L, \, t>0 \cr
u_x(t,0)=u_x(t,L)=v_x(t,0)=v_x(t,L)=0, & t>0, \cr
\end{cases}
\end{equation}
where $\chi$, $a$, $b$, $\mu$, $\nu$ are positive constants.
Among others, we prove there are $\chi^*>0$ and $\{\chi_k^*\}\subset [\chi^*,\infty)$
($\chi^*\in\{\chi_k^*\}$)
such that the constant solution $(\frac{a}{b},\frac{\nu}{\mu}\frac{a}{b})$ of \eqref{abstract-eq} is locally stable when $0<\chi<\chi^*$ and is unstable when $\chi>\chi^*$, and  under some generic condition,  for each $k\ge 1$, a (local) branch of non-constant stationary solutions of \eqref{abstract-eq} bifurcates   from $(\frac{a}{b},\frac{\nu}{\mu}\frac{a}{b})$ when $\chi$ passes through $\chi_k^*$, and global extension of the local bifurcation branch is obtained. We also prove that any sequence of non-constant {positive} stationary solutions
$\{(u(\cdot;\chi_n),v(\cdot;\chi_n))\}$ of \eqref{abstract-eq} with $\chi=\chi_n(\to \infty)$ develops spikes at any $x^*$ satisfying $\liminf_{n\to\infty} u(x^*;\chi_n)>\frac{a}{b}$.
Some numerical analysis is carried out. It is observed numerically that the local bifurcation branch bifurcating from  $(\frac{a}{b},\frac{\nu}{\mu}\frac{a}{b})$ when
$\chi$ passes through $\chi^*$ can be extended to $\chi=\infty$ and the stationary solutions on this  global bifurcation extension are locally stable when $\chi\gg 1$ and develop
spikes as $\chi\to\infty$.
\end{abstract}

\noindent {\bf Keywords}:  Chemotaxis, singular sensitivity,  logistic source,  stationary solutions, stability, local bifurcation,
global bifurcation,  spikes.

\medskip

\noindent {\bf Mathematics Subject Classification.}   35B20, 35B32, 35B40, 35Q92, 92C17, 92D25.

\section{Introduction}

The current paper is devoted to the study of stability, bifurcation, and spikes of positive stationary solutions of {the} following parabolic-elliptic  chemotaxis system with singular sensitivity and logistic source {on a bounded interval $[0,L]$   complemented with Neumann boundary condition},
\begin{equation}
\label{main-eq0}
\begin{cases}
u_t=u_{xx}-\chi (\frac{u}{v} v_x)_x+u(a-b u), &  0<x<L\cr
0=v_{xx}- \mu v+ \nu u, &  0<x<L \cr
u_x(t,0)=u_x(t,L)=v_x(t,0)=v_x(t,L)=0,
\end{cases}
\end{equation}
{where $u(t,x)$ and $v(t,x)$ represent the cellular density and chemical concentration at time $t$ and location $x$, $\chi>0$ is the chemotaxis sensitivity coefficient, $a>0$ is the growth rate of the cell population, $b>0$ is the self-limitation rate of the cell population,  $\mu  > 0$ represents the degradation rate of the chemical signal substance and $\nu >0$ is the production rate of the chemical signal substance by the cell population.}

Chemotaxis models are used to describe the movements of cells or living organisms in response to gradients of some chemical  substances.   Various chemotaxis systems, also known as Keller-Segel  systems, have been
widely studied since the pioneering works   \cite{Keller-0, Keller-00} by Keller and Segel at the beginning of 1970s
on the mathematical modeling of the aggregation process of Dictyostelium discoideum.

Consider the following parabolic-elliptic  chemotaxis system with singular sensitivity and logistic source in general dimensional setting,
\begin{equation}
\label{main-eq-intro}
\begin{cases}
u_t=\Delta u-\chi\nabla\cdot (\frac{u}{v} \nabla v)+ u(a-bu),\quad &x\in \Omega\cr
0=\Delta  v- \mu v+ \nu u,\quad &x\in \Omega \quad \cr
\frac{\p u}{\p n}=\frac{\p v}{\p n}=0,\quad &x\in\p\Omega,
\end{cases}
\end{equation}
where  $\Omega\subset\R^N$ is a bounded  smooth domain, and $\chi, \mu$ and $\nu$ are positive constants, and $a,b$ are nonnegative constants.
A considerable amount of research has been carried out on the global existence and boundedness of positive classical solutions  of \eqref{main-eq-intro}.

For example, in the case that $a=b=0$ and $\mu=\nu=1$,  Biler in \cite{Bil}  proved  the global existence of positive  solutions of \eqref{main-eq-intro}   when $\chi\le 1$ and $N=2$, or
 $\chi<2/N$ and $N\ge 2$.    Fujie,  Winkler, and  Yokota  in \cite{FuWiYo} proved the boundedness of globally defined
positive  solutions  of \eqref{main-eq-intro}   when $\chi<\frac{2}{N}$ and $N\ge 2$. Fujie and Senba in \cite{FuSe1} proved
 the global existence and  boundedness of
classical positive  solutions of \eqref{main-eq-intro}   for the case of $N=2$ for any $\chi>0$.

In the case that $a$ and $b$ are positive constants, when $N=2,$ Fujie, Winkler, and
Yokota in \cite{FuWiYo1} proved that finite-time blow-up does not occur in \eqref{main-eq-intro}, and moreover, if
\begin{equation}
\label{intro-assumption}
    a>
\begin{cases}
\frac{\mu \chi^2}{4} &\text{if $0< \chi \leq 2$}\\
\mu (\chi-1) &\text{if $\chi>2$},
\end{cases}
\end{equation}
then any globally defined positive solution of  \eqref{main-eq-intro}  is bounded.
Furthermore, Cao et al. in \cite{CaWaYu} proved that,    if $$a>2(\sqrt{\chi+1}-1)^2+\frac{\chi^2}{16\eta|\Omega|}$$ with the constant $\eta$ depending  on $\Omega$,  then  for any nonnegative initial data $u_0 \in C(\bar \Omega)$,  {where
\begin{equation}\label{Cfunction}
C(\bar \Omega):=\{u\,|\, u {\rm \,\ is \,\ continuous \,\ on\,\ \bar \Omega }\},
\end{equation}}
and $u_0 \not\equiv 0$ satisfying that $\int_{\Omega} u_0^{-1}<16 \eta b |\Omega|^2/\chi^2$,  the globally defined positive solution of \eqref{main-eq-intro}  converges to $\big( \frac{a}{b},\frac{\nu}{\mu} \frac{a}{b} \big)$ as $t\to\infty$ exponentially (see \cite[Theorem 1]{CaWaYu} for details).

{Let \begin{equation}\label{Cplus}
C^+(\bar \Omega):=\{u\in C(\bar\Omega)\,|\, u(x)\ge 0,\,\, \int_\Omega u(x)dx>0\}.
\end{equation}
Recently,  Kurt and Shen proved in \cite{HKWS} that  classical solutions of \eqref{main-eq-intro} with initial functions $u_0\in C^+(\bar\Omega)$ exist globally and stay bounded as time evolves, provided that $u_0$ is not too small and $a$ is large relative to $\chi$ (see \cite[Theorem 1.2(3)]{HKWS}), which is an interesting biological phenomenon.
 Moreover, some qualitative properties of \eqref{main-eq-intro} have  been obtained in \cite{HKWS2}. For example, assuming that $a$ is large relative to $\chi$,
 it is  shown in \cite{HKWS2}  that any globally defined positive
solution of \eqref{main-eq-intro}  is bounded above and below eventually by some positive constants independent of its initial functions provided that they are not too small.
}

There are still many interesting dynamical issues to be studied for  \eqref{main-eq-intro}. For example, whether any globally defined positive solution of  \eqref{main-eq-intro}  is bounded
without {the assumption that $a$ is large relative to $\chi$}; whether chemotaxis induces non-constant positive stationary solutions and if so, what about the stability of non-constant positive stationary solutions and whether non-constant stationary solutions develop spikes as $\chi\to \infty$, etc. {Some of those issues are strongly related to the stability and bifurcation of
stationary solutions of \eqref{main-eq-intro}.}

{There are many  works on bifurcation and spikes of stationary solutions of various chemotaxis models. For example, local and global bifurcation of constant positive stationary solutions and existence of spiky steady states for chemotaxis models with certain
regular sensitivity and with or without logistic source are studied in
\cite{KaKoWa, KoWeAl, KoWa, KoWeXu, KoWeXu1, WaYaGa, WaXu}, etc.  In particular,
consider
the following chemotaxis system with regular sensitivity and logistic source,
\begin{equation}
\label{regular-eq1}
\begin{cases}
u_t=u_{xx}-\chi (u v_x)_x+u(a-b u), &  0<x<L\cr
0=v_{xx}- \mu v+ \nu u, &  0<x<L \cr
u_x(t,0)=u_x(t,L)=v_x(t,0)=v_x(t,L)=0.
\end{cases}
\end{equation}
The authors of \cite{KoWeXu} proved the existence of single boundary spikes to \eqref{regular-eq1}  when $\chi$ is sufficiently large via the standard Lyapunov–Schmidt
reduction.  Among others, the authors of \cite{WaYaGa}  carried
out bifurcation analysis on
\eqref{regular-eq1} and  obtained the explicit formulas of bifurcation values and small amplitude nonconstant
positive solutions.}

{
There are  also some studies on bifurcation and spikes of stationary solutions of chemotaxis models with singular sensitivity but without logistic source. For example,  in \cite{Li} and \cite{Xu},  local and global bifurcation of positive constant solutions and  existence of spiky steady states are studied
for parabolic-parabolic  chemotaxis models with singular sensitivity and without logistic source.
It is shown in \cite{Li}  that positive monotone steady states exist as long as $\chi$  is larger than
the first bifurcation value $\chi^*$, and  that  the cell density function forms a spike.
The results in \cite{Li} apply to \eqref{main-eq0} with $a=b=0$.  The paper \cite{CaLiWa}
investigated the existence and nonlinear stability of boundary spike-layer solutions of a chemotaxis-consumption type system with logarithmic singular sensitivity in the half space, where the physical zero-flux
and Dirichlet boundary conditions are prescribed.}

{
However,  there is little study on bifurcations and spiky solutions of \eqref{main-eq0} with $a,b>0$.  Bifurcation analysis of \eqref{main-eq0} with $a,b>0$ is of great importance  in several aspects.  For example,   it  will show what types of bifurcations may occur in \eqref{main-eq0} as $\chi$ changes and what types of  spiky patterns may be developed in \eqref{main-eq0} as $\chi\to\infty$. It will discover some  intrinsic  similarity and/or difference between the singular and regular chemotaxis sensitivities. Note that, considering \eqref{main-eq-intro}, for a positive classical solution $(u(t,x),v(t,x))$ to exist globally and stay bounded, it requires that $u(t,x)$ does not become arbitrarily large and $v(t,x)$ (equivalently, $u(t,x)$) does not become arbitrarily small on any finite time interval. Consider the following chemotaxis model with regular sensitivity,
\begin{equation}
\label{main-eq-intro1}
\begin{cases}
u_t=\Delta u-\chi\nabla\cdot (u \nabla v)+ u(a-bu),\quad &x\in \Omega\cr
0=\Delta  v- \mu v+ \nu u,\quad &x\in \Omega \quad \cr
\frac{\p u}{\p n}=\frac{\p v}{\p n}=0,\quad &x\in\p\Omega.
\end{cases}
\end{equation}
For a positive classical solution $(u(t,x),v(t,x))$ of \eqref{main-eq-intro1} to exist globally and stay bounded, it only requires that $u(t,x)$ does not become arbitrarily large on any finite time interval.
It is of great interest to explore intrinsic similarity and difference between singular and regular chemotaxis sensitivity via various analysis.}

In this paper,  {we carry out some bifurcation analysis of \eqref{main-eq0} and some relevant study on stationary solutions of \eqref{main-eq0}}. In particular, we study the local stability and instability of the positive constant solution $(\frac{a}{b},\frac{\nu}{\mu}\frac{a}{b})$ of \eqref{main-eq0};   bifurcation solutions of \eqref{main-eq0}  from  the positive constant solution $(\frac{a}{b},\frac{\nu}{\mu}\frac{a}{b})$;
properties of non-constant positive stationary solutions; spikes developed by non-constant positive stationary solutions.
 Among others, we obtain the following results:

\begin{itemize}

\item[(i)] {\it There is $\chi^*:=\chi^*(a,\mu)>0$ depending only on $a$ and $\mu$ such that when
$0<\chi<\chi^*$, the positive constant solution  $(\frac{a}{b},\frac{\nu}{\mu}\frac{a}{b})$ of \eqref{main-eq0} is locally exponentially  stable with respect to small perturbations in $C([0,L])$, and when $\chi>\chi^*$,  $(\frac{a}{b},\frac{\nu}{\mu}\frac{a}{b})$  is unstable } (see Theorem \ref{local-stable-thm}).

\item[(ii)] {\it There are $\chi_k^*:=\chi_k^*(a,\mu)\ge \chi^*$ ($k=1,2,\cdots$) depending only on $a$ and $\mu$
such that $\inf_{k\ge 1}\chi_k^*=\chi_{k^{*}}^{*}=\chi^*$ for some ${k^*}\ge 1$, $\chi_k^*$ is strictly decreasing in $k\le {k^*}$ and strictly increasing in $k\ge {k^*}$, and under some generic condition,  a branch of non-constant stationary solutions of \eqref{main-eq0}, denoted by
${\tilde \Gamma_k(\chi)}$ , bifurcates   from $(\frac{a}{b},\frac{\nu}{\mu}\frac{a}{b})$
  when $\chi$ passes through $\chi_k^*$ and $\chi\in I_k$, where
 $0<\chi-\chi_k^*\ll 1$ or $0<\chi_k^*-\chi\ll 1$ for $\chi\in I_k$ }
(see Theorem  \ref{local-bif-thm}).

\item[(iii)]  {\it The local bifurcation branches ${\tilde \Gamma_k}=\{(\chi,{\tilde \Gamma_k(\chi)})\,|\, \chi\in I_k\}$
can be extended to global branches ${\tilde {\mathcal{C}}_k}$}   (see Theorem \ref{global-bif-thm} for detail).

\item[(iv)] {\it The $u$-components of non-constant positive stationary solutions {of} \eqref{main-eq0}  stay bounded in
$L^2(0,L)$ and the $v$-components of non-constant positive stationary solutions of \eqref{main-eq0}  stay bounded
in $H^2(0,L)$ as $\chi\to\infty$}  (see Theorem {\ref{property-bif-solu-thm1}}).

\item[(v)] {\it The weak limit of the $u$-components of any sequence of  non-constant positive
stationary solutions  $\{(u(\cdot;\chi_n),v(\cdot;\chi_n))\}$ of \eqref{main-eq0} with $\chi=\chi_n (\to\infty)$   is a constant lying in $[0,\frac{a}{b}]$}  (see Theorem \ref{property-bif-solu-thm}).

\item[(vi)] {\it A notion of spikes is introduced} (see Definition \ref{spike-solution-def})
{\it and it is proved that  any given sequence $\{(u(x;\chi_n),v(x;\chi_n))\}$ of  non-constant positive stationary solutions of \eqref{main-eq0} with $\chi=\chi_n (\to\infty)$ develops spikes as $n\to\infty$  at
those  $x^*$ satisfying  that $\liminf_{n\to\infty} u(x^*;\chi_n)>\frac{a}{b}$} (see Theorem \ref{spiky-solu-thm}) (hence a sequence $\{(u(x;\chi_n),v(x;\chi_n))\}$ of  non-constant positive stationary solutions of \eqref{main-eq0} with $\chi=\chi_n (\to\infty)$
can develop multiple spikes as $n\to\infty$).

\item[(vii)]  {\it Several numerical simulations are carried out and it is observed numerically that
the local bifurcation branch ${\tilde \Gamma_{k^*}(\chi)}$ in (ii) extends to $\chi=\infty$ and the solutions on ${\tilde{\mathcal{C}}_{k^*}}$ are locally stable for $\chi\gg 1$,  stay bounded,  and  develop spikes as $\chi\to\infty$}  (see Numerical Experiment 2 in section 6 for the case $a=b=\mu=\nu=L=1$,  locally stable boundary spiky solutions are observed in this case,
 and
Numerical Experiment 4 in section 6 for the case $a=b=\mu=\nu=1$, $L=6$, locally stable spiky solutions with multiple spikes are observed in this case).
\end{itemize}

The main methods/techniques used and/or developed in this paper  for  the study of stability, bifurcation, and spikes  of positive stationary solutions of \eqref{main-eq0} can be described  as follows. We study local stability/instability of positive stationary solutions of \eqref{main-eq0} by general perturbation theory
for dynamical systems/differential equations (see \cite{Hen}). The investigation of  local bifurcation of the constant solution $(\frac{a}{b},\frac{\nu}{\mu}\frac{a}{b})$ of \eqref{main-eq0} is carried out via invariant manifold theory, in particular,  center manifold theory,  for dynamical systems/differential equations (see \cite{Hen}).    We study the global extension of local bifurcation branches of $(\frac{a}{b},\frac{\nu}{\mu}\frac{a}{b})$ by the global bifurcation theory developed by Shi and Wang
(see \cite{ShWa}).
We investigate the development of spikes of non-constant positive stationary solutions of \eqref{main-eq0} via some important properties of positive stationary solutions of \eqref{main-eq0} obtained in this paper, for example, the property (v) stated in the above.  This is a novel approach. The  above mentioned methods/techniques would be useful for the study of bifurcation and spikes of stationary solutions in other chemotaxis models.

 {We would like to point out that, many people study the existence of nonconstant positive stationary solutions by applying the local bifurcation theory of Crandall-Rabinowtz (see \cite{Crandall-Rabinowitz}),  which can also be used to study \eqref{main-eq0}.}  {In  this paper, we employed the center manifold theory as the theoretical framework. The computations associated with this approach are elementary, and by using this method, we obtain several information simultaneously, such as the bifurcation direction and the explicit dependence of the bifurcating solutions on the parameters.}

\smallskip

{
We make the following remarks  on the implications of the results obtained in this paper,
and some related problems, which are worthy to be studied further.
\begin{itemize}
\item[(a)]   Some essential difference is observed  between the effects of the parameters on the dynamics of \eqref{main-eq0} and \eqref{regular-eq1}. For example, consider \eqref{main-eq0} and \eqref{regular-eq1}, and treat $\chi$ as a bifurcation parameter. Let $\chi_k^*$ be as in \eqref{chi-k-star}, that is,
\begin{equation*}
\chi_k^*=\frac{\mu L^2+k^2\pi^2}{k^2\pi^2L^2}\cdot\frac{k^2\pi^2+a L^2}{\mu},\quad k=1,2,\cdots,
\end{equation*}
and $\bar\chi_k^*$ be as in \eqref{tilde-chi-k-star}, that is,
\begin{equation*}
{\bar \chi_k^*}= \frac{\mu L^2+k^2\pi^2}{k^2\pi^2L^2}\cdot\frac{b\big(k^2\pi^2+a L^2\big)}{a\nu},\quad k=1,2,\cdots.
\end{equation*}
Let $\chi^*$  and $\bar\chi^*$ be as in \eqref{chi-star} and  \eqref{tilde-chi-star}, respectively, that is,
$$
\chi^*=\inf_{k\ge 1} \chi_k^*,\quad \bar\chi^*=\inf_{k\ge 1}\bar\chi_k^*.
$$
 Then  the constant solution $(\frac{a}{b},\frac{\nu}{\mu}\frac{a}{b})$  of \eqref{main-eq0}
(resp. \eqref{regular-eq1}) is locally stable when $0<\chi<\chi^*$ (resp. $0<\chi<\bar\chi^*$)  and unstable when $\chi>\chi^*$ (resp. $\chi>\bar\chi^*$), and  pitchfork bifurcation occurs in \eqref{main-eq0} (resp. \eqref{regular-eq1}) when $\chi$ passes through $\chi_k^*$ (resp. $\bar\chi_k^*$) (see (i), (ii) described in the above and \cite{WaYaGa}). Hence the larger $\chi^*$ (resp. $\bar\chi^*$), the less influence of the chemotaxis on the dynamics of \eqref{main-eq0} (resp. \eqref{regular-eq1}).

 \item[(b)] Observe that
$\chi_k^*$ and $\chi^*$ depend only on the parameters $a$ and $\mu$ in \eqref{main-eq0}, and
$\bar\chi_k^*$ and $\bar\chi^*$ depend on all the parameters $a,b,\mu,\nu$ in \eqref{regular-eq1}.   Hence there is some essential difference between the influence of the chemotaxis on the dynamics of  \eqref{main-eq0} and \eqref{regular-eq1}.
Mathematically,  this difference  is due to the following two factors: first,  $b$ is cancelled
in $\frac{u}{v}$ when linearizing \eqref{main-eq0} at the constant solution $(\frac{a}{b},\frac{\nu}{\mu}\frac{a}{b})$;
second,  $\nu$ can be assumed to be $1$ by changing $u$ to $\nu u$ (after such change of variable,
$b$ becomes $\frac{b}{\nu}$). Note that
$$
 \lim_{a\to\infty} \chi_k^*=\infty, \quad \lim_{\mu\to 0^{+}} \chi_ k^*=\infty,
$$
and
$$
\lim_{a\to 0^{+}}\bar\chi_k^*=\infty,\quad \lim_{b\to\infty}\bar \chi_k^*=\infty,\quad \lim_{\nu\to 0^{+}} \bar \chi_k^*=\infty.
$$
 Biologically,  the difference between $\chi^*$ and $\bar\chi^*$ can be interpreted as follows. In the chemotaxis model \eqref{main-eq0} with singular sensitivity, the chemotaxis has less influence on the dynamics of \eqref{main-eq0} if $a$ is large or $\mu$ is small, which is natural since large $a$ and small $\mu$  prevent $v$ from becoming too small as time evolves. In the chemotaxis model
\eqref{regular-eq1} with regular sensitivity, the chemotaxis has less influence on the dynamics of \eqref{regular-eq1} if $a$ is small, $b$ is large, or $\nu$ is small, which is also natural
since small $a$, large $b$, and small $\nu$  prevent $v$ from becoming too large as time evolves.
The above intrinsic difference between singular and regular chemotaxis sensitivities resulting from the local bifurcation analysis matches in certain sense the difference between the following sufficient conditions for
the global existence of positive classical solutions of \eqref{main-eq-intro} and \eqref{main-eq-intro1}:
a positive classical solution $(u(t,x),v(t,x))$ of \eqref{main-eq-intro} exists globally if $a$ is large relative to $\chi$ and the initial distribution $u(0,x)$ is not too small (see \cite[Theorem 1.2(3)]{HKWS}), and a positive classical solution $(u(t,x),v(t,x))$ of \eqref{main-eq-intro1} exists globally if $b$ is large relative to $\chi$ (see \cite[Theorem 2.5]{TeWi1}).

\item[(c)] Besides rigorous bifurcation analysis near the constant solution
$(\frac{a}{b},\frac{\nu}{\mu}\frac{a}{b})$,  in this paper, we
also  perform some  theoretical as well as numerical analysis of non-constant stationary solutions of \eqref{main-eq0}  for $\chi\gg 1$
and obtain several interesting results (see (iii)-(vii) described in the above). For example, it is observed that,  up to a subsequence,
as $\chi_n\to\infty$,  a sequence $ \{(u(\cdot;\chi_n),v(\cdot;\chi_n))\}$ of nonconstant stationary solutions of \eqref{main-eq0}
develops spikes {{at any $x^*$ satisfying $\liminf_{n\to\infty} u(x^*;\chi_n)>\frac{a}{b}$}}.  It is interesting to further study spiky solutions of \eqref{main-eq0}  with $\chi\gg 1$, for example, to derive
asymptotic expansions of spiky patterns with $\chi\gg 1$,  to  perform some bifurcation analysis around
spiky patterns, etc.  We leave such questions  for future investigation. It should be pointed out that
 spiky patterns of logistic
Keller–Segel models with regular sensitivity has been studied  in \cite{KoWeAl}, \cite{KoWeXu} and \cite{KoWeXu1}.

\item[(d)]
Consider the following two parabolic-parabolic chemotaxis models
\begin{equation}
\label{para-para-regular}
\begin{cases}
u_t=u_{xx}-\chi (u v_x)_x+u(a-b u), &  0<x<L\cr
v_t=v_{xx}- \mu v+ \nu u, &  0<x<L \cr
u_x(t,0)=u_x(t,L)=v_x(t,0)=v_x(t,L)=0,
\end{cases}
\end{equation}

\begin{equation}
\label{para-para-singular}
\begin{cases}
u_t=u_{xx}-\chi (\frac{u}{v} v_x)_x+u(a-b u), &  0<x<L\cr
v_t=v_{xx}- \mu v+ \nu u, &  0<x<L \cr
u_x(t,0)=u_x(t,L)=v_x(t,0)=v_x(t,L)=0.
\end{cases}
\end{equation}

We want to mention that the system \eqref{para-para-regular} has very rich spatial-temporal dynamics as demonstrated by the numerical studies in \cite{PainterHillen}. For example, the authors in \cite{PainterHillen} numerically demonstrated that the long time dynamics of solutions of \eqref{para-para-regular} undergo homogeneous stationary solution, stationary spatial patterns in which multiple-peak patterns develop, spatial-temporal periodic solutions and spatial-temporal irregular solutions which describes a form of spatial-temporal chaos as $\chi$ is increased steadily (see section 5 of \cite{PainterHillen} for details).

Based on some numerical simulations we performed, complicated dynamics is not observed
in  \eqref{para-para-singular} for various parameter sets.
For example,    following the same simulations performed in section 5 of \cite{PainterHillen} and using the same parameter set, we did some numerical simulations for the system  \eqref{para-para-singular}. We did not observe  such rich dynamics.  Indeed,  we only observed homogeneous stationary solution and different peak patterns depending on randomised initial conditions as $\chi$ is increased steadily. Spatial-temporal periodic solutions and spatial-temporal irregular solutions are not observed in \eqref{para-para-singular}. We also used the parameter sets we used in Section 6.1 and Section 6.2 to simulate the existence of nonconstant solutions for the system \eqref{para-para-singular}. For the parameter set used in section 6.1, we observed the same phenomenon as demonstrated in section 6.1. For the parameter set in section 6.2, in addition to the phenomenon we observed in section 6.2, we also see the boundary perks. Time-dependent solutions are also not observed. The above numerical simulations indicate that the dynamics of system \eqref{para-para-singular} is simpler than that of \eqref{para-para-regular}. It is certainly important to further explore whether \eqref{para-para-singular} exhibits complicated  dynamics
or not.
 We also  leave this question  for future investigation.

\end{itemize}
}

The rest of this paper is organized as follows.  Section 2 is devoted to the study {of} the local stability and instability of the constant solution $(\frac{a}{b},\frac{\nu}{\mu}\frac{a}{b})$ of \eqref{main-eq0}. In section 3, we study local bifurcation of  $(\frac{a}{b},\frac{\nu}{\mu}\frac{a}{b})$ and stability of the local bifurcation solutions. The global extension of
the local bifurcation is investigated in section 4. Several important properties of non-constant positive stationary solutions of \eqref{main-eq0} are also obtained in section 4.
Section 5 is devoted to the study of spiky stationary solutions of \eqref{main-eq0}.
In the last section, we provide some numerical analysis on the bifurcation and spiky solutions of \eqref{main-eq0}.

\section{Local stability and instability of the positive constant solution}

In this section, we analysis the  local stability and instability of the positive constant solution
$(\frac{a}{b},\frac{\nu}{\mu}\frac{a}{b})$ of \eqref{main-eq0} by  general perturbation theory.

The following is the main theorem in this section.

\begin{theorem}
\label{local-stable-thm}
There is $\chi^*=\chi^*(a,\mu)$ depending only on $a$ and $\mu$ such that
when $0<\chi<\chi^*$, the constant solution $(\frac{a}{b},\frac{\nu}{\mu}\frac{a}{b})$
of  \eqref{main-eq0} is locally {asymptotically} stable  with respect to perturbations in $C^+([0,L])$ and when $\chi>\chi^*$,  $(\frac{a}{b},\frac{\nu}{\mu}\frac{a}{b})$ is unstable with respect to perturbations in $C^+([0,L])$.
\end{theorem}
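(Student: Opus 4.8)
The plan is to exploit the parabolic–elliptic structure to reduce \eqref{main-eq0} to a single nonlocal parabolic equation for $u$, linearize at the constant solution, and then invoke the principle of linearized stability from analytic semigroup theory (as in \cite{Hen}). Since $\mu>0$, the operator $-\p_{xx}+\mu$ under Neumann boundary conditions is invertible, so the second equation of \eqref{main-eq0} recovers $v$ uniquely from $u$; writing $v=\nu(-\p_{xx}+\mu)^{-1}u=:\mathcal{S}u$, the system becomes the scalar equation
$$u_t=u_{xx}-\chi\Big(\tfrac{u}{\mathcal{S}u}(\mathcal{S}u)_x\Big)_x+u(a-bu),$$
to which Henry's framework applies on a fractional power space attached to the Neumann realization of $-\p_{xx}$ (embedding into $C([0,L])$). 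Near $(\frac{a}{b},\frac{\nu}{\mu}\frac{a}{b})$ the chemical concentration $v=\mathcal{S}u$ is bounded away from $0$, so the singular nonlinearity is smooth there and the abstract machinery is available.

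Next I would compute the linearization. Writing $u=\frac{a}{b}+U$ and $v=\frac{\nu}{\mu}\frac{a}{b}+V$, and using $\frac{\bar u}{\bar v}=\frac{\mu}{\nu}$ together with $a-2b\bar u=-a$, the linearized flux equals $\frac{\mu}{\nu}V_x$ and the linearized reaction equals $-aU$, so the linearized equations are $U_t=U_{xx}-\chi\frac{\mu}{\nu}V_{xx}-aU$ and $0=V_{xx}-\mu V+\nu U$. Eliminating $V$ through $V_{xx}=\mu V-\nu U$ and $V=\nu(-\p_{xx}+\mu)^{-1}U$ produces the linear operator
$$\mathcal{L}(\chi)U=U_{xx}+(\chi\mu-a)U-\chi\mu^2(-\p_{xx}+\mu)^{-1}U,$$
whose principal part $\p_{xx}$ makes it sectorial (the remaining terms being relatively bounded perturbations), and which is self-adjoint in $L^2(0,L)$, hence has real spectrum. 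Crucially, $\mathcal{L}(\chi)$ is simultaneously diagonalized by the Neumann eigenfunctions $\cos(\frac{k\pi x}{L})$, $k\ge 0$.

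The key step is then the spectral computation: on the $k$-th mode $\mathcal{L}(\chi)$ acts by multiplication by
$$\lambda_k(\chi)=-\frac{k^2\pi^2}{L^2}-a+\frac{\chi\mu k^2\pi^2}{k^2\pi^2+\mu L^2},\qquad k=0,1,2,\dots,$$
so $\lambda_0=-a<0$, while for $k\ge 1$ one checks that $\lambda_k(\chi)<0$ is equivalent to $\chi<\chi_k^*$ with $\chi_k^*=\frac{\mu L^2+k^2\pi^2}{k^2\pi^2L^2}\cdot\frac{k^2\pi^2+aL^2}{\mu}$ (note only $a$ and $\mu$ enter among the kinetic parameters). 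Since $\lambda_k(\chi)\to-\infty$ as $k\to\infty$ for fixed $\chi$, the value $\chi^*:=\inf_{k\ge1}\chi_k^*$ is a positive minimum attained at a finite index, so only finitely many modes can destabilize. The principle of linearized stability then finishes the argument: for $0<\chi<\chi^*$ all $\lambda_k<0$ and the spectrum lies in a left half-plane bounded away from the imaginary axis, giving local exponential (asymptotic) stability of $(\frac{a}{b},\frac{\nu}{\mu}\frac{a}{b})$; for $\chi>\chi^*$ some $\lambda_k>0$, giving instability.

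The eigenvalue algebra is elementary, so the main obstacle I expect is the functional-analytic bookkeeping: confirming that the reduced scalar equation with its nonlocal term $(-\p_{xx}+\mu)^{-1}$ and singular sensitivity genuinely fits the sectorial-operator setting, so that the spectrum of $\mathcal{L}(\chi)$ controls nonlinear stability, and verifying the spectral-gap hypothesis required to apply the linearized stability principle in the positivity-preserving setting of $C^+([0,L])$. Secondary care is needed to argue rigorously that $\chi^*$ is attained (hence $\chi^*>0$) and that the finitely many borderline modes are handled cleanly at and near $\chi=\chi^*$.
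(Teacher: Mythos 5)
Your proposal is correct and follows essentially the same route as the paper: linearize at $(\frac{a}{b},\frac{\nu}{\mu}\frac{a}{b})$, observe that the resulting (nonlocal) operator is diagonalized by the Neumann cosine modes, obtain the same eigenvalues $\lambda_k(\chi)$ and thresholds $\chi_k^*$ (hence $\chi^*=\inf_{k\ge1}\chi_k^*$, attained since $\chi_k^*\to\infty$), and conclude via the principle of linearized stability/instability from \cite{Hen}. The only cosmetic difference is that you phrase the reduction through $(-\p_{xx}+\mu)^{-1}$ and note self-adjointness in $L^2$, whereas the paper works directly with the operator $A$ on $C([0,L])$; the substance is identical.
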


\begin{proof}
{Recall that for any $u_0\in C^+([0,L)]$, \eqref{main-eq0} has a unique globally defined classical solution $(u(t,x;u_0),v(t,x;u_0))$ satisfying that
$$
\lim_{t\to 0^{+}} \|u(t,\cdot;u_0)-u_0\|_{{C([0,L])}}=0
$$
and $u(t,\cdot;u_0)$ is continuous in $(t,u_0)\in [0,\infty)\times C^+([0,L])$ {(see \cite[Theorem 1.2]{HKWS})}.}
To prove the local stability of the constant solution $(\frac{a}{b},\frac{\nu}{\mu}\frac{a}{b})$, consider the linearized equation of \eqref{main-eq0} at $(\frac{a}{b},\frac{\nu}{\mu}\frac{a}{b})$ in $C([0,L])$,
\begin{equation}
\label{main-linear-eq1}
\begin{cases}
u_t=u_{xx}+(\chi\mu -a)u-\frac{\chi \mu^2}{\nu} v, &  0<x<L\cr
0=v_{xx}-\mu v+\nu u, &  0<x<L\cr
u_x(t,0)=u_x(t,L)=v_x(t,0)=v_x(t,L)=0.
\end{cases}
\end{equation}
Consider the eigenvalue problem associated with \eqref{main-linear-eq1} in the space {$C([0,L])$},
\begin{equation}
\label{main-ev-eq1}
\begin{cases}
u_{xx}+(\chi\mu -a)u-\frac{\chi \mu^2}{\nu} v=\lambda u, &  0<x<L\cr
0=v_{xx}-\mu v+\nu u, & 0<x<L\cr
{u_x(0)=u_x(L)=v_x(0)=v_x(L)=0.}
\end{cases}
\end{equation}

Let $A: \{u\in C^2([0,L])\,|\, u_x(0)=u_x(L)=0\}\to C([0,L])$ be defined by
$$
A u=u_{xx}+(\chi\mu-a)u-\frac{\chi\mu^2}{\nu} v,
$$
where $v$ is the unique solution of
\begin{equation}
\label{new-new-v-eq}
\begin{cases}
0=v_{xx}-\mu v+\nu u,\quad 0<x<L\cr
v_x(0)=v_x(L)=0.
\end{cases}
\end{equation}
Note that the spectrum of the operator $A$, denoted by $\sigma(A)$,  consists of eigenvalues of \eqref{main-ev-eq1}.

Suppose that $\lambda$ is an eigenvalue of \eqref{main-ev-eq1} and $u$ is a corresponding eigenfunction.
Let
$$
u=\sum_{k=0}^\infty u_k \cos(\frac{k \pi x}{L}),\quad v=\sum_{k=0}^\infty v_k \cos(\frac{k\pi x}{L}).
$$
Then we have
\begin{equation}
\label{main-ev-eq2}
-\frac{k^2\pi^2}{L^2}u_k+(\chi\mu -a)u_k-\frac{\chi\mu^2}{\frac{k^2\pi^2}{L^2}+\mu}
u_k=\lambda u_k,\quad  k=0,1,2,\cdots.
\end{equation}
This implies that $\lambda=\lambda_k$ and $u=u_k\cos\frac{k\pi x}{L}$ for some $k\in\mathbb{Z}^+$ and $u_k\in\mathbb{R}\setminus\{0\}$, where
\begin{equation}
\label{lambda-k-eq}
\lambda_k(\chi,a,\mu):=-\frac{k^2\pi^2+aL^2}{L^2}+\frac{\chi\mu k^2\pi^2}{\mu L^{2}+k^2\pi^2}, \quad { k=0,1,2,\cdots}.
\end{equation}
It then follows that
$$
\sigma(A)=\{\lambda_k(\chi,a,\mu)\,|\, k=0,1,2,\cdots\}
$$
and $u=\phi_k=\cos\frac{k\pi x}{L}$ is an eigenfunction of $A$ associated to the eigenvalue $\lambda_k(\chi,a,\mu)$.

It is clear that
$$
\lambda_0=-a<0.
$$
Let
\begin{equation}
\label{chi-k-star}
\chi_k^*=\frac{\mu L^2+k^2\pi^2}{k^2\pi^2L^2}\cdot\frac{k^2\pi^2+a L^2}{\mu},\quad k=1,2,\cdots.
\end{equation}
Then
$$
\lambda_k(\chi,a,\mu)\begin{cases} <0& {\rm for}\,\,\chi<\chi_k^*\cr
=0 &{\rm for}\,\, \chi=\chi_k^*\cr
>0 & {\rm for}\,\, \chi>\chi_k^*
\end{cases}
$$
for $k=1,2,\cdots.$
Let
\begin{equation}
\label{chi-star}
\chi^*=\inf_{k\ge 1}\chi_k^*.
\end{equation}
Then for $\chi<\chi^*$, all the eigenvalue of \eqref{main-ev-eq1} are negative. Hence
$(\frac{a}{b},\frac{\nu}{\mu}\frac{a}{b})$ is linearly exponentially stable with respect to perturbations in $C([0,L])$, where $C([0,L])$ is defined as in \eqref{Cfunction}.
By the general perturbation theory (see \cite[Theorem 5.1.1]{Hen}), $(\frac{a}{b},\frac{\nu}{\mu}\frac{a}{b})$ is a locally {exponentially} stable solution of \eqref{main-eq0} with respect to {small} perturbations in {$C([0,L])$}.
If $\chi>\chi^*$,  then $(\frac{a}{b},\frac{\nu}{\mu}\frac{a}{b})$ is linearly unstable with respect to perturbations in $C([0,L])$.
By the general perturbation theory again (see \cite[Theorem 5.1.1]{Hen}), $(\frac{a}{b},\frac{\nu}{\mu}\frac{a}{b})$ is an unstable solution of \eqref{main-eq0} with respect to {small} perturbations in {$C([0,L])$}.
\end{proof}

\begin{remark}
\label{chi-star-rk}
\begin{itemize}
\item[(1)]
{$\lambda_k$  depends only  on $\chi,a,\mu$
and $\chi_k^*$  depends only  on $a,\mu$.}
Let
 $$h(k)=\frac{\mu L^2+k^2\pi^2}{k^2\pi^2\mu}\cdot\frac{k^2\pi^2+a L^2}{L^2}.
$$
We have that $h(k)$ is monotone decreasing in $k$ for $0<k\le \frac{L}{\pi}(a\mu)^{1/4}$ and is monotone increasing in $k$ for $k\ge \frac{L}{\pi}(a\mu)^{1/4}$.  Let
$$
k_*=\Big\lfloor {\frac{L}{\pi}(a\mu)^{1/4}}\Big\rfloor
$$
be  the greatest integer less than or equal to $\frac{L}{\pi}(a\mu)^{1/4}$.
{Then} we have the following explicit formula for $\chi^*$,
\begin{equation*}
%\label{chi-star-eq1}
\chi^*=\begin{cases}
\frac{\mu L^2+\pi^2}{\pi^2\mu}\cdot\frac{\pi^2+a L^2}{L^2}\quad &{\rm if}\,\, k_*=0\cr\cr
\frac{\mu L^2+k_*^2\pi^2}{k_*^2\pi^2\mu}\cdot\frac{k_*^2\pi^2+a L^2}{L^2} \quad &{\rm if}\,\, k_*\ge 1,\,\, k_*=\frac{L}{\pi}(a\mu)^{1/4}\cr\cr
\min\{ \frac{\mu L^2+k_*^2\pi^2}{k_*^2\pi^2\mu}\cdot\frac{k_*^2\pi^2+a L^2}{L^2}, \frac{\mu L^2+(k_*+1)^2\pi^2}{(k_*+1)^2\pi^2\mu}\cdot\frac{(k_*+1)^2\pi^2+a L^2}{L^2}\}\quad &{\rm if}\quad k_*\ge 1,\,\, k_*\not =\frac{L}{\pi}(a\mu)^{1/4}.
\end{cases}
\end{equation*}
{Note that
$$
 h\Big(\frac{L}{\pi}(a\mu)^{1/4}\Big)\le \chi^*\le h(1).
$$
 Hence
\begin{equation*}
    \left(1+\sqrt{\frac{a}{\mu}}\right)^2 \leq \chi^* \leq \left(1+\sqrt{\frac{a}{\mu}}\right)^2+\frac{ \pi^2}{\mu L^2}+\frac{aL^2}{\pi^2}.
\end{equation*}}

\item[(2)]
$k_*$ can be any nonnegative integer by properly choosing $a,\mu$ and $L$.
For example,  {fix $b=\nu=1$,} if $a=\mu=L=1$, then $k_*=0$ and
$\chi^*=\chi_1^*\approx 11.9709$;  if $a=\mu=1$ and $L=6$,
then $k_*=1$ and $\chi^{*}=\chi_2^*\approx 4.0085${; if}
$a=2$, $\mu=3$,  and $L=6$,  {then $k_{*}=2$ and} $\chi^*=\chi_3^*\approx 3.2997$.
\end{itemize}
\end{remark}

\medskip

\begin{remark}
\label{stability-constant-solu-rk}
\begin{itemize}
\item[(1)]
It can be proved that when $0<\chi\ll1$,  the constant solution $(\frac{a}{b},\frac{\nu}{\mu}\frac{a}{b})$
of  \eqref{main-eq0} is globally asymptotically  stable (see \cite[Theorem 1]{CaWaYu} for details).
$(\frac{a}{b},\frac{\nu}{\mu}\frac{a}{b})$ may not be globally asymptotically stable
when $\chi<\chi^*$ and $0<\chi^*-\chi\ll 1$ (see (2) and (3) in the following).

\item[(2)] For the case $a=b=\mu=\nu=L=1$, \eqref{main-eq0} experiences super-critical pitchfork bifurcation when $\chi$ passes through $\chi^*$ (see Remark \ref{remark-on-bifurcation} (1)).
It is observed numerically that when $0<\chi<\chi^*$,
the constant solution $(\frac{a}{b},\frac{\nu}{\mu}\frac{a}{b})$
of  \eqref{main-eq0} is globally stable (see {Numerical Experiment 1}).  But it remains open whether the constant solution $(\frac{a}{b},\frac{\nu}{\mu}\frac{a}{b})$
of  \eqref{main-eq0} is truly globally stable for any $0<\chi<\chi^*$.

\item[(3)] For the case $a=b=\mu=\nu=1$ and $L=6$, \eqref{main-eq0} experiences sub-critical pitchfork bifurcation when $\chi$ passes through $\chi^*$ (see  Remark \ref{remark-on-bifurcation} (2)). It is observed numerically that  when $0<\chi<\chi^*$ and $0<\chi^*-\chi\ll 1$, there are some locally stable nonconstant positive steady-states of \eqref{main-eq0} and hence
the constant solution $(\frac{a}{b},\frac{\nu}{\mu}\frac{a}{b})$
of  \eqref{main-eq0} is not globally stable (see {Numerical Experiment 3}).

\item[(4)] For the case $a=2$, $b=\nu=1$, $\mu=3$ and $L=6$, \eqref{main-eq0} also experiences sub-critical pitchfork bifurcation when $\chi$ passes through $\chi^*$ (see  Remark \ref{remark-on-bifurcation} (3)). It is also observed numerically that  when $0<\chi<\chi^*$ and $0<\chi^*-\chi\ll 1$, there are some locally stable nonconstant positive steady-states of \eqref{main-eq0} and hence
the constant solution $(\frac{a}{b},\frac{\nu}{\mu}\frac{a}{b})$
of  \eqref{main-eq0} is not globally stable {(see section 6.4).}

\end{itemize}
\end{remark}

\begin{remark}
\label{regular-sensitivity-rk1}
Consider \eqref{regular-eq1}.
The linearized equation of \eqref{regular-eq1} reads as follows
\begin{equation}
\label{regular-linear-eq1}
\begin{cases}
u_t=u_{xx}+(\frac{\chi\nu a}{b} -a)u-\frac{\chi \mu a}{b} v, &  0<x<L\cr
0=v_{xx}-\mu v+\nu u, &  0<x<L\cr
u_x(t,0)=u_x(t,L)=v_x(t,0)=v_x(t,L)=0.
\end{cases}
\end{equation}
The associated  eigenvalue problem to \eqref{regular-linear-eq1}  {reads} as
\begin{equation}
\label{regular-ev-eq1}
\begin{cases}
u_{xx}+(\frac{\chi\nu a}{b} -a)u-\frac{\chi \mu a}{b} v=\lambda u, &  0<x<L\cr
0=v_{xx}-\mu v+\nu u, & 0<x<L\cr
u_x(t,0)=u_x(t,L)=v_x(t,0)=v_x(t,L)=0.
\end{cases}
\end{equation}
We have
\begin{align*}
\bar \lambda_k&=-\frac{\pi^2 k^2}{L^2}+\big(\frac{\chi \nu a}{b}-a\big)-
\frac{\chi \mu a}{b}\frac{\nu}{\mu +\frac{\pi^2 k^2}{L^2}}\\
&=-\frac{\pi^2 k^2+aL^2}{L^2}+\frac{\chi \nu a \pi^2  k^2}{ b(L^2 \mu +\pi^2 k^2)}, \quad { k=0,1,2,\cdots}
\end{align*}
 is an algebraic simple eigenvalue
with $\cos\frac{k \pi x}{L}$ being a corresponding eigenfunction. Note that $\bar \lambda_0=-a$.
Let
\begin{equation}
\label{tilde-chi-k-star}
{\bar \chi_k^*}= \frac{\mu L^2+k^2\pi^2}{k^2\pi^2L^2}\cdot\frac{b\big(k^2\pi^2+a L^2\big)}{a\nu},\quad k=1,2,\cdots
\end{equation}
and
\begin{equation}
\label{tilde-chi-star}
\bar \chi^*=\inf_{k\ge 1}{\bar \chi_k^*}.
\end{equation}
Then the constant solution $(\frac{a}{b},\frac{\nu}{\mu}\frac{a}{b})$ is locally asymptotically stable when $0<\chi<\bar\chi^*$ and unstable when $\chi>\bar \chi^*$. {Detailed bifurcation analysis of \eqref{regular-eq1} can be seen in \cite{WaYaGa} over a one-dimensional bounded region and \cite{JinWangZhang} over a multidimensional bounded domain.}
It is seen that {$\bar \chi_k^{*}$} depends {on} all the parameters in \eqref{regular-eq1} and then
$\bar \chi^*$ depends {on} all the parameters in \eqref{regular-eq1}, {while $\chi^*$ defined in \eqref{chi-star} depends only on $a$ and $\mu$ (see Remark \ref{chi-star-rk} (1)).}
\end{remark}

\section{Local bifurcation and stability of bifurcating solutions}

In this section, we investigate the local bifurcation of \eqref{main-eq0} when $\chi$ passes through $\chi_k^*$, where $\chi_k^*$ is given in \eqref{chi-k-star}.

To state our main results on local bifurcation solutions from the constant solution $(\frac{a}{b},\frac{\nu}{\mu}\frac{a}{b})$, we first introduce some notions.  For given $k_0\ge 1$, let
\begin{equation}
\label{a-0-k-eq}
a_{0}=-\frac{b}{2a}, \quad
a_{2k_0}=\frac{1}{\lambda_{2k_0}(\chi_{k_0}^*,a,\mu)}\Big(\frac{b}{2}-\frac{\chi_{k_0}^* \mu b  k_0^4 \pi^4}{a(\mu L^2+k_0^2\pi^2)^2}\Big).
\end{equation}
{Let
\begin{equation}\label{alpha-k0-eq}
\alpha_{k_0}=\frac{\mu k_0^2\pi^2}{\mu L^2 +k_0^2 \pi^2}
\end{equation}
and
\begin{equation}
\label{beta-k-0-eq}
\beta_{k_0}=b\Big( 2a_{0}+a_{2k_0}+\frac{\chi_{k_0}^* \mu k_0^4\pi^4}{\nu a (\mu L^2+k_0^2\pi^2)(\mu L^2+4k_0^2\pi^2)} a_{2k_0}
+\frac{\chi_{k_0}^* \mu^2 b k_0^4 \pi^4 L^2}{4 a^2 (\mu L^2+k_0^2\pi^2)^3}\Big).
\end{equation} }

We now state the main results of this section.

\begin{theorem}
\label{local-bif-thm}
\begin{itemize}
\item[(1)] (Pitchfork bifurcation) For  given $k_0\ge 1$,   if $\beta_{k_0}\not =0$ and $\lambda_k(\chi_{k_0}^*,a,\mu)\not =0$ for any $k\not =k_0$, {where $\beta_{k_0}$ and $\lambda_k(\chi_{k_0}^*,a,\mu)$ are given in \eqref{beta-k-0-eq} and \eqref{lambda-k-eq}, respectively,} then  pitchfork bifurcation occurs  in \eqref{main-eq0} near $(\frac{a}{b},\frac{\nu}{\mu}\frac{a}{b})$ when $\chi$ passes through $\chi_{k_0}^*$.  Moreover,  let $\Gamma_{k_0}(\chi)$ be the  $u$-components of  bifurcation solutions  of \eqref{main-eq0} for
$\chi\approx \chi_{k_0}^*$, we have
\begin{equation}
\label{two-branches-eq}
\Gamma_{k_0}(\chi)=\Gamma_{k_0}^+(\chi)\cup \Gamma_{k_0}^-(\chi),
\end{equation}
where, for any $u\in \Gamma_{k_0}^+(\chi)$, $u$  is of the form
\begin{equation}
\label{upper-branch-eq}
u(x)=\begin{cases}
\frac{a}{b}+\sqrt{\frac{\alpha_{k_0}(\chi- \chi_{k_0}^*)}{\beta_{k_0}}}\cos\frac{k_0\pi x}{L}+O(\chi-\chi_{k_0}^*)\,\,{\rm for}\,\,  0<\chi-\chi_{k_0}^*\ll 1\,\,  {\rm if}\,\, \beta_{k_0}>0\cr\cr
\frac{a}{b}+\sqrt{\frac{\alpha_{k_0} (\chi-\chi_{k_0}^*)}{\beta_{k_0}}}\cos\frac{k_0\pi x}{L}+O(\chi-\chi_{k_0}^*)\,\,{\rm for}\,\, 0<\chi_{k_0}^*-\chi\ll 1\,\, {\rm if}\,\, \beta_{k_0}<0,
\end{cases}
\end{equation}
{where $\alpha_{k_0}$ is given in \eqref{alpha-k0-eq},} and for any $u\in \Gamma_{k_0}^-(\chi)$, $u$ is of the form
\begin{equation}
\label{lower-branch-eq}
u(x)=\begin{cases}
\frac{a}{b}-\sqrt{\frac{\alpha_{k_0}(\chi- \chi_{k_0}^*)}{\beta_{k_0}}}\cos\frac{k_0\pi x}{L}+O(\chi-\chi_{k_0}^*)\,\,{\rm for}\,\,  0<\chi-\chi_{k_0}^*\ll 1\,\,  {\rm if}\,\, \beta_{k_0}>0\cr\cr
\frac{a}{b}-\sqrt{\frac{\alpha_{k_0} (\chi-\chi_{k_0}^*)}{\beta_{k_0}}}\cos\frac{k_0\pi x}{L}+O(\chi-\chi_{k_0}^*)\,\,{\rm for}\,\, 0<\chi_{k_0}^*-\chi\ll 1\,\, {\rm if}\,\, \beta_{k_0}<0.
\end{cases}
\end{equation}

\item[(2)] (Stability/instability of bifurcation solutions) Assume  the conditions in (1). If  $\chi_{k_0}^*>\chi^*$, then the bifurcation solutions  with $u\in \Gamma_{k_0}(\chi)$  are unstable.

\item[(3)] (Stability/instability of bifurcation solutions)
If  $\chi_{k_0}^*=\chi^*$  and  $\beta_{k_0}>0$, then super-critical pitchfork bifurcation occurs when $\chi$ passes through $\chi^*$ and the bifurcation solutions  with $u\in \Gamma_{k_0}{(\chi)}$ are locally stable.
If  $\chi_{k_0}^*=\chi^*$  and  $\beta_{k_0}<0$, then sub-critical pitchfork  bifurcation occurs when $\chi$ passes through $\chi^*$ and the bifurcation solutions  with $u\in \Gamma_{k_0}{(\chi)}$ are unstable.
\end{itemize}
\end{theorem}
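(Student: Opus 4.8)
The plan is to slave $v$ to $u$, reduce \eqref{main-eq0} to a scalar equation on a one-dimensional center manifold, and read off the pitchfork structure and stability from the resulting normal form. First I would use the second equation to eliminate $v$: since $0=v_{xx}-\mu v+\nu u$ under Neumann boundary conditions, the mode expansion $u=\sum_k u_k\cos\frac{k\pi x}{L}$ gives $v=\sum_k\frac{\nu}{\mu+k^2\pi^2/L^2}u_k\cos\frac{k\pi x}{L}$, so $v=v[u]$ is a bounded linear function of $u$. Setting $u=\frac{a}{b}+U$ turns \eqref{main-eq0} into a single evolution equation $U_t=A(\chi)U+N(U,\chi)$, where $A(\chi)$ is the sectorial operator from Section 2 with eigenvalues $\lambda_k(\chi,a,\mu)$ and eigenfunctions $\phi_k=\cos\frac{k\pi x}{L}$, and $N$ collects the quadratic and higher order terms obtained by Taylor expanding the chemotaxis flux $\frac{u}{v}v_x$ (in particular $\frac{1}{v}$) about the constant state. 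At $\chi=\chi_{k_0}^*$ one has $\lambda_{k_0}=0$, while the genericity hypothesis $\lambda_k(\chi_{k_0}^*,a,\mu)\neq 0$ for $k\neq k_0$ guarantees that the center subspace is exactly the one-dimensional span of $\phi_{k_0}$.

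Next I would apply parameter-dependent center manifold theory (see \cite{Hen}), writing $U=s\,\phi_{k_0}+\Psi(s,\chi)$ with $\Psi$ lying in the complement of $\mathrm{span}\{\phi_{k_0}\}$ and $\Psi=O(s^2)$. The reduced scalar equation takes the form $\dot s=\lambda_{k_0}(\chi)s+c_2 s^2+c_3 s^3+\cdots$. The observation that forces a pitchfork is that $c_2=0$ by mode orthogonality: every quadratic term in the quadratic part $Q(\phi_{k_0},\phi_{k_0})$ of $N$ is built from products of $\cos\frac{k_0\pi x}{L}$ and its derivative, hence contains only the modes $0$ and $2k_0$, so projecting onto $\phi_{k_0}$ by $\int_0^L(\cdot)\cos\frac{k_0\pi x}{L}\,dx$ vanishes. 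This holds for every $k_0\ge 1$ independently of any reflection symmetry, which is why the bifurcation is always of pitchfork type.

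The heart of the computation is the cubic coefficient. I would first solve for the quadratic part of the center manifold, $\Psi=s^2\psi_2+O(s^3)$, where $\psi_2$ satisfies $A(\chi_{k_0}^*)\psi_2=-(I-P)\,Q(\phi_{k_0},\phi_{k_0})$ with $P$ the spectral projection onto $\mathrm{span}\{\phi_{k_0}\}$; since $Q(\phi_{k_0},\phi_{k_0})$ has only the $0$- and $2k_0$-modes, $\psi_2=a_{0}\phi_0+a_{2k_0}\phi_{2k_0}$, and inverting $A(\chi_{k_0}^*)$ mode by mode yields precisely the constants in \eqref{a-0-k-eq} (the factor $\lambda_{2k_0}(\chi_{k_0}^*,a,\mu)^{-1}$ requiring $\lambda_{2k_0}\neq 0$, while $\lambda_0=-a$ gives $a_{0}=-\frac{b}{2a}$). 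Feeding $\psi_2$ back into the quadratic interaction, adding the genuine cubic terms, and projecting onto $\phi_{k_0}$ produces $c_3$; using $\frac{d}{d\chi}\lambda_{k_0}|_{\chi_{k_0}^*}=\alpha_{k_0}$ from \eqref{alpha-k0-eq} and \eqref{lambda-k-eq} (so that $\lambda_{k_0}(\chi)=\alpha_{k_0}(\chi-\chi_{k_0}^*)$ exactly), the normal form becomes $\dot s=\alpha_{k_0}(\chi-\chi_{k_0}^*)s-\beta_{k_0}s^3+\cdots$ with $\beta_{k_0}$ as in \eqref{beta-k-0-eq}. Setting $\dot s=0$ gives $s=\pm\sqrt{\alpha_{k_0}(\chi-\chi_{k_0}^*)/\beta_{k_0}}$, which (since $\alpha_{k_0}>0$) exists for $\chi>\chi_{k_0}^*$ when $\beta_{k_0}>0$ and for $\chi<\chi_{k_0}^*$ when $\beta_{k_0}<0$; substituting into $u=\frac{a}{b}+s\,\phi_{k_0}+\Psi$ yields the two branches \eqref{upper-branch-eq}--\eqref{lower-branch-eq}, proving (1).

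For the stability statements (2) and (3), I would linearize the reduced equation at the nontrivial fixed point $s^*$: the center-direction eigenvalue is $\alpha_{k_0}(\chi-\chi_{k_0}^*)-3\beta_{k_0}(s^*)^2=-2\alpha_{k_0}(\chi-\chi_{k_0}^*)$, negative on a super-critical branch and positive on a sub-critical one. The remaining transverse eigenvalues of the linearization at the bifurcating solution are $O(\sqrt{|\chi-\chi_{k_0}^*|})$-close to $\lambda_k(\chi_{k_0}^*,a,\mu)$, $k\neq k_0$, by continuity. If $\chi_{k_0}^*=\chi^*=\inf_{k\ge 1}\chi_k^*$, all of these are negative, so stability is decided by the center direction: super-critical gives a stable branch and sub-critical an unstable one, which is (3). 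If instead $\chi_{k_0}^*>\chi^*$, there is a $k_1$ with $\chi_{k_1}^*=\chi^*<\chi_{k_0}^*$, and since $\lambda_{k_1}$ is increasing in $\chi$ we get $\lambda_{k_1}(\chi_{k_0}^*,a,\mu)>0$, so at least one transverse eigenvalue is positive and the branch is unstable, which is (2). The main obstacle is the bookkeeping in the cubic coefficient: correctly expanding the singular flux $\frac{u}{v}v_x$ to third order, tracking how the slaved variable $v[U]$ contributes at each mode, and combining the direct cubic terms with the quadratic-times-$\psi_2$ contributions to arrive at the exact expression \eqref{beta-k-0-eq}.
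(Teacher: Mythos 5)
Your proposal is correct and follows essentially the same route as the paper: both pass to the cosine-mode ODE system (with $v$ slaved to $u$ through the elliptic equation), reduce to a one-dimensional center manifold spanned by the $k_0$-mode (with $\chi$ suspended as a parameter), observe that the quadratic interaction only excites the $0$- and $2k_0$-modes so the reduced equation is the pitchfork normal form $\dot s=\alpha_{k_0}(\chi-\chi_{k_0}^*)s-\beta_{k_0}s^3+\cdots$ with the slaved coefficients \eqref{a-0-k-eq}, and decide stability from the center-direction eigenvalue together with the sign of the transverse eigenvalues $\lambda_k(\chi_{k_0}^*,a,\mu)$, $k\neq k_0$. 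The only cosmetic difference is that you phrase the reduction with spectral projections while the paper writes out the infinite ODE system explicitly; the content is identical.
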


\begin{remark}
\label{remark-on-bifurcation}
\begin{itemize}
\item[(1)]
For the case $a=b=\mu=\nu=L=1$, we have
$$
 \chi^*=\chi_1^*\approx 11.9709,\quad \chi_k^*>\chi^*\quad \forall\, k\ge 2,
\quad {\rm and}\,\,\,
\beta_1\approx {0.4144} >0. \quad %{\rd \,\text{(W.S. find the value of}\,\, \beta_1\,\, \text{approximately})}
$$
Hence supercritical pitchfork bifurcation occurs  when $\chi$ passes through $\chi^*$.

\item[(2)] For the case $a=b=\mu=\nu=1$ and $L=6$, we have
$$\chi^*=\chi_2^*\approx 4.0085, \quad \chi_k^*>\chi^*\quad \forall\, k\ge 1,\,\, k\not =2,
\quad {\rm and}\,\,\,
\beta_2\approx {-0.3109} <0. \quad %{\rd \,\text{(W.S. find the value of}\,\, \beta_2\,\, \text{approximately})}
$$
Hence subcritical pitchfork bifurcation occurs for $\chi<\chi^*$, $0<\chi^*-\chi\ll 1$.

\item[(3)]
For the case
$a=2$, $b=\nu=1$, $\mu=3$,  and $L=6$, we have
$$\chi^*=\chi_3^*\approx 3.2997,\quad {\chi_k^*>\chi_3^*\quad \forall\, k\ge 1,\,\, k\not = 3,}
\quad {\rm and}\,\,\,
\beta_3\approx { -0.1344 <0}.
$$
Hence subcritical pitchfork bifurcation occurs for $\chi<\chi^*$, $0<\chi^*-\chi\ll 1$.
\end{itemize}
\end{remark}

Before proving  Theorem \ref{local-bif-thm}, we first make some variable changes.
First, let
$$
u=\frac{a}{b}+\tilde u,\quad v=\frac{\nu}{\mu}\frac{a}{b}+\tilde v.
$$
Then \eqref{main-eq0} becomes
\begin{equation*}
\begin{cases}
\tilde u_t= \tilde u_{xx}-\frac{\chi\mu}{\nu} \Big(\frac{(1+\frac{b}{a}\tilde u)}{(1+\frac{\mu}{\nu}\frac{b}{a}\tilde v)}\tilde v_x\Big)_x-a \tilde u -b\tilde u^2,\quad &0<x<L\cr
0=\tilde v_{xx}-\mu \tilde v+\nu \tilde u,\quad &   0<x<L\cr
\tilde u_x(t,0)=\tilde u_x(t,L)=\tilde v_x(t,0)=\tilde v_x(t,L)=0.
\end{cases}
\end{equation*}
This implies that
\begin{equation*}
\begin{cases}
\tilde u_t= \tilde u_{xx}-\frac{\chi\mu}{\nu} \Big(\big(1+\frac{b}{a}\tilde u\big)
\big(1- \frac{\mu}{\nu}\frac{b}{a}\tilde v+\frac{\mu^2}{\nu^2}\frac{b^2}{a^2}\tilde v^2-\frac{\mu^3}{\nu^3}\frac{b^3}{a^3}\tilde v^3+O({\tilde v^3})\big) \tilde v_x\Big)_x-a\tilde u-b\tilde u^2,& 0<x<L\cr
0=\tilde v_{xx}-\mu \tilde v+\nu \tilde u,&  0<x<L\cr
\tilde u_x(t,0)=\tilde u_x(t,L)=\tilde v_x(t,0)=\tilde v_x(t,L)=0,
\end{cases}
\end{equation*}
that is,
\begin{equation}
\label{main-eq2}
\begin{cases}
\tilde u_t= \tilde u_{xx}-\frac{\chi\mu}{\nu} \Big(\big(1+\frac{b}{a}\tilde u -\frac{\mu}{\nu}\frac{b}{a}\tilde v  -\frac{\mu}{\nu}\frac{b^2}{a^2}\tilde u\tilde v +\frac{\mu^2}{\nu^2}\frac{b^2}{a^2}\tilde v^2 + O(|\tilde u|^3+|\tilde v|^3)\big)\tilde v_x  \Big)_x -a\tilde u-b\tilde u^2,& 0<x<L\cr
0=\tilde v_{xx}-\mu \tilde v+\nu \tilde u,& 0<x<L\cr
\tilde u_x(t,0)=\tilde u_x(t,L)=\tilde v_x(t,0)=\tilde v_x(t,L)=0.
\end{cases}
\end{equation}

Next, let
\begin{equation}
\label{change-variable-eq1}
\begin{cases}
\tilde u=\displaystyle \sum_{i=0}^\infty u_i(t)\cos(\frac{i \pi x}{L})\cr
\tilde v=\displaystyle \sum_{i=0}^\infty v_i(t)\cos(\frac{i\pi x}{L}).
\end{cases}
\end{equation}
{Note that for any $t>0$, $\tilde u(t,\cdot),\tilde v(t,\cdot)\in C^2([0,L])$.  Hence we have
$$
\tilde u_{x}(t,x)=-\sum_{i=0}^{\infty} \frac{i\pi }{L} u_i(t)\sin(\frac{i\pi x}{L}),\quad  \tilde v_{x}(t,x)=-\sum_{i=0}^{\infty} \frac{i\pi }{L} v_i(t)\sin(\frac{i\pi x}{L})
$$
and
$$
\tilde u_{xx}(t,x)=-\sum_{i=0}^{\infty} \frac{i^2\pi^2 }{L^2} u_i(t)\cos(\frac{i\pi x}{L}),\quad  \tilde v_{xx}(t,x)=-\sum_{i=0}^{\infty} \frac{i^2\pi^2 }{L^2} v_i(t)\cos(\frac{i\pi x}{L}).
$$ }
Then the first equation in \eqref{main-eq2} becomes
\begin{align}
\label{main-eq3-u}
{\Big(\sum_{i=0}^\infty u_i(t) \cos(\frac{i\pi x}{L})\Big)_t}=& -\sum_{i=0}^\infty \frac{i^2\pi^2}{L^2}u_i \cos \frac{i \pi x}{L}-a\sum_{i=0}^\infty u_i \cos \frac{i \pi x}{L}-b \Big( \sum_{i=0}^\infty u_i \cos \frac{i \pi x}{L}\Big)^2\nonumber\\
&+\frac{\chi\mu}{\nu} \Big( \big(1+\frac{b}{a}\sum_{i=0}^\infty u_i\cos\frac{i\pi x}{L}-\frac{\mu}{\nu}\frac{b}{a}\sum_{i=0}^\infty v_i \cos \frac{i\pi x}{L}\nonumber\\
& -
\frac{\mu }{\nu}\frac{b^2}{a^2}\Big(\sum_{i=0}^\infty u_i\cos \frac{i \pi x}{L}\Big)\Big(\sum_{i=0}^\infty v_i\cos\frac{i \pi x}{L}\Big)
+\frac{\mu^2}{\nu^2}\frac{b^2}{a^2}\Big(\sum_{i=0}^\infty v_i\cos \frac{i \pi x}{L}\Big)^2 \nonumber\\
&+O( \sum_{i=0}^\infty (|u_i|^3+|v_i|^3))\big) (\sum_{i=0}^\infty
\frac{i \pi}{L} v_i \sin\frac {i \pi x}{L})\Big)_x,\quad 0<x<L.
\end{align}
By the second equation in \eqref{main-eq2}, we have
\begin{equation}
\label{main-eq3-v}
v_i(t)=\frac{\nu}{\mu +\frac{i^2 \pi^2}{L^2}} u_i(t),\quad i=0,1,2,\cdots.
\end{equation}

By \eqref{main-eq3-u} and \eqref{main-eq3-v}, we obtain a system of ODEs for $u_k(t)$, $k=0,1,2,\cdots$. To be more precise, we first observe that
 \begin{align}
\label{ode-eq0}
 \int_0^L \Big(\sum _{i=0}^\infty u_i(t)\cos(\frac{i\pi x}{L})\Big)_t\cos(\frac{k\pi x}{L})dx
&=\Big(\int_0 ^L \big(\sum_{i=0}^\infty u_i(t)\cos(\frac{i\pi x}{L})\big)\cos(\frac{k\pi x}{L})dx\Big)_t\nonumber\\
&=\begin{cases} L  \frac{d u_k(t)}{dt}\quad &{\rm if}\quad k=0\cr
\frac{L}{2} \frac{d u_k}{dt}\quad &{\rm if}\quad k\ge 1,
\end{cases}
\end{align}
and
\begin{align}
\label{ode-eq1}
\int_0^L (\sum_{i=0}^\infty u_i \cos \frac{i\pi x}{L})^2 dx
&=\Big(u_0^2+\frac{1}{2} \big(u_1^2+u_2^2+\cdots\big)\Big)L.
\end{align}
We also observe that for $k\ge 1$,
\begin{align}
\label{ode-eq2}
\int_0^L \Big(\sum_{i=0}^\infty \frac{i^2\pi^2}{L^2}u_i\cos\frac{i\pi x}{L}\Big)\cos\frac{k\pi x}{L}dx={\frac{k^2\pi^2}{2L}} u_k,
\end{align}
\begin{align}
\label{ode-eq3}
\int_0^L \Big(\sum_{i=0}^\infty u_i \cos\frac{i \pi x}{L}\Big)\cos\frac{k \pi x}{L} dx=\frac{L}{2} u_k,
\end{align}
\begin{align}
&\int_0^L \Big(\sum_{i=0}^\infty u_i \cos\frac{i\pi x}{L} \Big)^2 \cos \frac{k \pi x}{L}dx
=\frac{L}{4} \Big (\sum_{i-j=k} u_i u_j + \sum_{i-j=-k}  u_i u_j+\sum_{i+j=k} u_iu_j\Big),
\end{align}
\begin{align}
\int_0^L \Big(\sum_{i=0}^\infty \frac{i \pi}{L}v_i \sin\frac{i\pi x}{L}\Big)_x \cos\frac{k \pi x}{L}dx
&=\frac{k^2 \pi^2}{2L} v_k,
\end{align}
\begin{align}
\label{ode-eq5}
&\int_0^L \Big((\sum_{i=0}^\infty u_i \cos\frac{i\pi x}{L})(\sum_{i=0}^\infty \frac{i \pi}{L} v_i \sin \frac{{ i} \pi x}{L})\Big)_x \cos\frac{ k\pi x}{L} dx \nonumber\\
&={\frac{k \pi}{4}}\Big(\sum _{i-j=-k}^\infty \frac{ j\pi}{L} u_i v_j
+\sum_{i+j=k} \frac{j\pi}{L} u_i v_{j} -\sum_{i-j=k}^\infty  \frac{j\pi}{L} u_{i} v_j\Big),
\end{align}
\begin{align}
\label{ode-eq6}
&\int_0^L \Big((\sum_{i=0}^\infty v_i \cos\frac{i\pi x}{L})(\sum_{i=0}^\infty \frac{i \pi}{L} v_i \sin \frac{{ i} \pi x}{L})\Big)_x \cos\frac{ k\pi x}{L} dx \nonumber\\
&={\frac{k \pi}{4}} \big(\sum _{i-j=-k}^\infty \frac{ j\pi}{L} v_i v_j
+\sum_{i+j=k} \frac{j\pi}{L} v_i v_{j} -\sum_{i-j=k}^\infty  \frac{j\pi}{L} v_{i} v_j\big),
\end{align}
\begin{align}
\label{ode-eq7}
&\int_0^L \Big( (\sum_{i=0}^\infty u_i\cos\frac{i\pi x}{L})(\sum_{i=0}^\infty v_i\cos\frac{i\pi x}{L})(\sum_{i=0}^\infty \frac{i\pi x}{L} v_i \sin\frac{i\pi x}{L})\Big)_x \cos\frac{k\pi x}{L}dx\nonumber\\
&= {\frac{k \pi}{8}} \Big[\sum_{i-j+l=k}\frac{l\pi}{L} u_iv_j v_l+\sum_{-i-j+l=k}\frac{l\pi}{L} u_iv_j v_l +\sum_{i+j+l=k}\frac{l\pi}{L} u_iv_j v_l+\sum_{-i+j+l=k}\frac{l\pi}{L} u_iv_j v_l\nonumber\\
&\qquad -\sum_{-i+j-l=k}\frac{l\pi}{L} u_i v_j v_l-\sum_{i+j-l=k}\frac{l\pi}{L}u_i v_j v_l
-\sum_{i-j-l=k}\frac{l\pi}{L} u_i v_j v_l\Big],
\end{align}
and
\begin{align}
\label{ode-eq8}
&\int_0^L \Big((\sum_{i=0}^\infty v_i \cos\frac{i \pi x}{L})^2 (\sum_{i=0}^\infty { \frac{i \pi }{L}v_{i}\sin \frac{i\pi x}{L}})\Big)_x\cos\frac{k\pi x}{L}dx\nonumber\\
&= {\frac{k \pi}{8}} \Big[\sum_{i-j+l=k}\frac{l\pi}{L} v_iv_j v_l+\sum_{-i-j+l=k}\frac{l\pi}{L} v_iv_j v_l +\sum_{i+j+l=k}\frac{l\pi}{L} v_iv_j v_l+\sum_{-i+j+l=k}\frac{l\pi}{L} v_iv_j v_l\nonumber\\
&\qquad -\sum_{-i+j-l=k}\frac{l\pi}{L} v_i v_j v_l-\sum_{i+j-l=k}\frac{l\pi}{L}v_i v_j v_l
-\sum_{i-j-l=k}\frac{l\pi}{L} v_i v_j v_l\Big].
\end{align}

By \eqref{ode-eq0}-\eqref{ode-eq8}, integrating \eqref{main-eq3-u} over $[0,L]$ and then dividing by $L$, we get
\begin{equation}
\label{ode-eq-u0}
\frac{d u_0}{dt}=-a u_0-b u_0^2 -\frac{1}{2} b\sum_{i=1}^\infty u_i^2.
\end{equation}
By \eqref{main-eq3-v} and \eqref{ode-eq0}-\eqref{ode-eq8}, for each $k\ge 1$, multiplying \eqref{main-eq3-u} by $\frac{2}{L}\cos\frac{k\pi x}{L}$ and then integrating over $[0,L]$, we get
\begin{align}
\label{ode-eq-uk}
\frac{du_k}{dt}=& \lambda_k(\chi,a,\mu)u_k -  \frac{b}{2} \Big (\sum_{i-j=k} u_i u_j + \sum_{i-j=-k}  u_i u_j+\sum_{i+j=k} u_iu_j\Big)\nonumber\\
&+ \frac{\chi\mu}{\nu} g_k(u_0,u_1,\cdots ),\quad k=1,2,\cdots,
\end{align}
where {$\lambda_k(\chi,a,\mu)$ is as in \eqref{lambda-k-eq}},
and
\begin{align}
\label{gk-eq1}
&g_{k}(u_0,u_1,u_2,\cdots)\nonumber\\
&=\frac{b}{a} \frac{k \pi}{L} \frac{1}{2} \Big(\sum _{i-j=-k}^\infty \frac{ j\pi}{L} u_i v_j
+\sum_{i+j=k} \frac{j\pi}{L} u_i v_{j} -\sum_{i-j=k}^\infty  \frac{j\pi}{L} u_{i} v_j\Big)\nonumber\\
&\quad -\frac{\mu}{\nu}\frac{b}{a} \frac{k \pi}{L} \frac{1}{2} \Big(\sum _{i-j=-k}^\infty \frac{ j\pi}{L} v_i v_j
+\sum_{i+j=k} \frac{j\pi}{L} v_i v_{j} -\sum_{i-j=k}^\infty  \frac{j\pi}{L} v_{i} v_j\Big)\nonumber\\
&\quad -\frac{\mu}{\nu}\frac{b^2}{a^2}  \frac{k\pi}{L}\frac{1}{4} \Big[\sum_{i-j+l=k}\frac{l\pi}{L} u_iv_j v_l+\sum_{-i-j+l=k}\frac{l\pi}{L} u_iv_j v_l +\sum_{i+j+l=k}\frac{l\pi}{L} u_iv_j v_l+\sum_{-i+j+l=k}\frac{l\pi}{L} u_iv_j v_l\nonumber\\
&\qquad -\sum_{-i+j-l=k}\frac{l\pi}{L} u_i v_j v_l-\sum_{i+j-l=k}\frac{l\pi}{L}u_i v_j v_l
-\sum_{i-j-l=k}\frac{l\pi}{L} u_i v_j v_l\Big]\nonumber\\
&\quad +\frac{\mu^2}{\nu^2}\frac{b^2}{a^2}\frac{k\pi}{L}\frac{1}{4} \Big[\sum_{i-j+l=k}\frac{l\pi}{L} v_iv_j v_l+\sum_{-i-j+l=k}\frac{l\pi}{L} v_iv_j v_l +\sum_{i+j+l=k}\frac{l\pi}{L} v_iv_j v_l+\sum_{-i+j+l=k}\frac{l\pi}{L} v_iv_j v_l\nonumber\\
&\qquad -\sum_{-i+j-l=k}\frac{l\pi}{L} v_i v_j v_l-\sum_{i+j-l=k}\frac{l\pi}{L}v_i v_j v_l
-\sum_{i-j-l=k}\frac{l\pi}{L} v_i v_j v_l\Big]{+O(\sum_{i=0}^{\infty}(u_{i}^{4}+v_{i}^{4}))}
\end{align}
($v_i$ is given by \eqref{main-eq3-v}).
Combing \eqref{ode-eq-u0} and \eqref{ode-eq-uk},
 we have the following system of ODEs for $u_0,u_1,u_2,\cdots$ with $\{u_i\}\in l^2$,
\begin{equation}
\label{ode-system-eq1}
\begin{cases}
\frac{du_0}{dt}=-au_0-bu_0^2-\frac{1}{2} b \sum_{i=1}^\infty u_i^2\cr
\frac{d u_k}{dt}=\lambda_k(\chi,a,\mu) u_k -  \frac{b}{2} \Big (\sum_{i-j=k} u_i u_j + \sum_{i-j=-k}  u_i u_j+\sum_{i+j=k} u_iu_j\Big)\cr
\qquad\quad +\frac{\chi\mu}{\nu} g_k(u_0,u_1,u_2,\cdots),\quad k=1,2,\cdots.
\end{cases}
\end{equation}

To study the bifurcation of \eqref{main-eq0} near $(\frac{a}{b},\frac{\nu}{\mu}\frac{a}{b})$, it then reduces  to study
the bifurcation  of \eqref{ode-system-eq1} from the trivial solutions $u_k=0$ for $k=0,1,2\cdots$.

We now  prove Theorem \ref{local-bif-thm}.

\begin{proof}[Proof of Theorem \ref{local-bif-thm}]
(1) Fix $k_0\ge 1$.   Recall that $\lambda_{k_0}(\chi_{k_0}^*,a,\mu)=0$.
We will use center manifold theory to investigate the bifurcation solutions of \eqref{ode-system-eq1} near the zero solution when $\chi$ passes through $\chi_{k_0}^*$. To this end,
set $\chi=\chi_{k_0}^*+\tilde \chi$.    We can then write \eqref{ode-system-eq1} as
\begin{equation}
\label{ode-system-eq2}
\begin{cases}
\frac{du_0}{dt}=-au_0-bu_0^2-\frac{1}{2} b \sum_{i=1}^\infty u_i^2\cr
\frac{d u_{k_0}}{dt}=\frac{\tilde \chi\mu k_0^2\pi^2}{\mu L^2 +k_0^2 \pi^2}u_{k_0}  -  \frac{b}{2} \Big (\sum_{i-j=k_0} u_i u_j + \sum_{i-j=-k_0}  u_i u_j+\sum_{i+j=k_0} u_iu_j\Big)\cr
\qquad\quad +
\frac{(\chi_{k_0}^*+\tilde\chi)\mu}{\nu} g_{k_0}(u_0,u_1,u_2,\cdots)\cr
\frac{d u_k}{dt}=\Big(\lambda_k(\chi_{k_0}^*,a,\mu) +\frac{\tilde\chi\mu k^2\pi^2}{\mu L^2+\kappa^2\pi^2}\Big) u_k  -  \frac{b}{2} \Big (\sum_{i-j=k} u_i u_j + \sum_{i-j=-k}  u_i u_j+\sum_{i+j=k} u_iu_j\Big)\cr
\qquad\quad +\frac{( \chi_{k_0}^*+\tilde \chi )\mu}{\nu} g_k(u_0,u_1,u_2,\cdots),\quad k\not =k_0,  k=1,2,\cdots\cr
\frac{d\tilde \chi}{dt}=0.
\end{cases}
\end{equation}
By the assumption in (1),  $\lambda_k(\chi_{k_0}^*,a,\mu)\not =0$ for all $k\not =k_0$. Note that
$$
g_k(u_0,u_1,\cdots)=O(u_0^2+u_1^2+u_2^2+\cdots),
\quad k=1,2,\cdots.
$$
Then by center manifold theory, for $\tilde\chi=o(1)$ and ${u_{k_{0}}}=o(1)$, there are
$h_{k}(\tilde \chi,u_{k_0})$, {and constants $a_{k,1}, a_{k,2}, a_{k,3}$} ($k{{\not =}}k_0$) such that
\begin{equation}
\label{center-manifold-eq1}
h_k(\tilde \chi,u_{k_0})=a_{k,1} u_{k_0}^2 +a_{k,2} \tilde\chi u_{k_0} +a_{k,3} \tilde \chi^2 + O(\tilde\chi^3+u_{k_0}^3),\quad k\not = k_0,\,\, k=0,1,2\cdots,
\end{equation}
and
\begin{equation}
\label{center-manifold-eq2}
\mathcal{M}=\{\big((u_0,u_1,\cdots,u_{k_0},\cdots),\tilde\chi\big)|
\tilde \chi=o(1),\,\, u_{k_0}=o(1),\,\, u_k=h_k(\tilde\chi,u_{k_0}) \,\, {\rm for}\,\, k\not =k_0\}
\end{equation}
is locally invariant under \eqref{ode-system-eq2}.  $\mathcal{M}$ is referred to the center manifold at $0$ for \eqref{ode-system-eq2}.

 In the following, we find the reduced ODE on the center manifold and then study the bifurcation solutions of the reduced ODE.
To this end, first, differentiating  $u_0=a_{0,1}u_{k_0}^2 +a_{0,2} \tilde \chi u_{k_0}+a_{0,3}\tilde \chi^2+ O(\tilde\chi^3+u_{k_0}^3)$
with respect to $t$ and using  \eqref{ode-system-eq2}, we get  on $\mathcal{M}$,
\begin{align}
\label{ode-eq-u0-1}
\frac{d u_0}{dt}&=2 a_{0,1} u_{k_0} \frac{d u_{k_0}}{dt}+a_{0,2}\tilde \chi \frac{du_{k_0}}{dt}+O(\tilde\chi^3+u_{k_0}^3)\nonumber\\
&=\Big(2 a_{0,1}u_{k_0} +a_{0,2}\tilde\chi\Big)   \Big(\frac{\tilde \chi\mu { k_{0}^2}\pi^2}{\mu L^2 +{ k_{0}^2} \pi^2}u_{k_0} -  \frac{b}{2} \big (\sum_{i-j=k_0} u_i u_j + \sum_{i-j=-k_0}  u_i u_j+\sum_{i+j=k_0} u_iu_j\big)\nonumber\\
&\quad +
\frac{(\chi_{k_0}^*+\tilde\chi)\mu}{\nu}g_{k_0}(u_0,u_1,u_2,\cdots)\Big)\nonumber+O(\tilde\chi^3+u_{k_0}^3)\nonumber\\
&=O(\tilde \chi^3+u_{k_0}^3)
\end{align}
for $\tilde \chi=o(1)$ and $u_{k_0}=o(1)$.
On the other hand, we have
\begin{align}
\label{ode-eq-u0-2}
\frac{du_0}{dt}&=-a u_0-b u_0^2-\frac{1}{2}b\sum_{i=1}^\infty u_i^2\nonumber\\
&=-a \Big(a_{0,1} u_{k_0}^2+a_{0,2}\tilde\chi u_{k_0}+a_{0,3}\tilde \chi^2 \Big)-\frac{1}{2}b u_{k_0}^2+O(\tilde\chi^3+u_{k_0}^3)
\end{align}
for $\tilde \chi=o(1)$ and $u_{k_0}=o(1)$. By \eqref{ode-eq-u0-1} and \eqref{ode-eq-u0-2}, we must have
\begin{equation}
\label{coefficients-eq1}
a_{0,1}=-\frac{b}{2a},\,\, a_{0,2}=a_{0,3}=0.
\end{equation}

 Next, for $k\ge 1$, $k\not =k_0$, differentiating  $u_k=a_{k,1}u_{k_0}^2 +a_{k,2} \tilde \chi u_{k_0}+a_{k,3}\tilde \chi^2+ O(\tilde\chi^3+u_{k_0}^3)$
with respect to $t$ and using  \eqref{ode-system-eq2}, we get  on $\mathcal{M}$,
\begin{align}
\label{ode-eq-uk-1}
\frac{du_k}{dt}&=(2a_{k,1} u_{k_0}+a_{k,2}\tilde \chi)   \Big(\frac{\tilde \chi\mu k_0^2\pi^2}{\mu L^2 +k_0^2 \pi^2}u_{k_0} -  \frac{b}{2} \big (\sum_{i-j=k_0} u_i u_j + \sum_{i-j=-k_0}  u_i u_j+\sum_{i+j=k_0} u_iu_j\big)\nonumber\\
&\qquad +
\frac{(\chi_{k_0}^*+\tilde\chi)\mu}{\nu}g_{k_0}(u_0,u_1,u_2,\cdots)\Big) +O(\tilde\chi^3+u_{k_0}^3)\nonumber\\
&=O(\tilde\chi^3+u_{k_0}^3)
\end{align}
for $\tilde \chi=o(1)$ and $u_{k_0}=o(1)$.
On the other hand, we have
\begin{align}
\label{ode-eq-uk-2}
\frac{d u_k}{dt}&=\Big(\lambda_k(\chi_{k_0}^*,a,\mu)
+\frac{\tilde\chi\mu k^2\pi^2}{\mu L^2+\kappa^2\pi^2}\Big) u_k   -  \frac{b}{2} \Big (\sum_{i-j=k} u_i u_j + \sum_{i-j=-k}  u_i u_j+\sum_{i+j=k} u_iu_j\Big)\nonumber\\
&\quad +\frac{( \chi_{k_0}^*+\tilde \chi )\mu}{\nu} g_k(u_0,u_1,u_2,\cdots)\nonumber\\
&=\lambda_k(\chi_{k_0}^*,a,\mu)  (a_{k,1} u_{k_0}^2+a_{k,2}\tilde\chi u_{k_0} +a_{k,3}\tilde\chi^2) -  \frac{b}{2} \Big (\sum_{i-j=k} u_i u_j + \sum_{i-j=-k}  u_i u_j+\sum_{i+j=k} u_iu_j\Big)\nonumber\\
&\quad +\frac{\chi_{k_0}^* \mu}{\nu} g_k(u_0,u_1,\cdots)+O(\tilde\chi^3+u_{k_0}^3)
\end{align}
for $\tilde \chi=o(1)$ and $u_{k_0}=o(1)$.
By \eqref{gk-eq1}, when $k=2k_0$, we have
\begin{align}
\label{ode-eq-uk-3}
 & -  \frac{b}{2} \Big (\sum_{i-j=2k_0} u_i u_j + \sum_{i-j=-2k_0}  u_i u_j+\sum_{i+j=2k_0} u_iu_j\Big) +\frac{\chi_{k_0}^* \mu}{\nu} g_{2k_0}(u_0,u_1,\cdots)\nonumber\\
&=-\frac{b}{2} u_{k_0}^2+\frac{\chi_{k_0}^*\mu}{\nu} \Big[\frac{b}{a} \frac{k_0^2 \pi^2}{L^2}  u_{k_0} v_{k_0} -\frac{\mu}{\nu}\frac{b}{a} \frac{k_0^2 \pi^2}{L^2}   v_{k_0} v_{k_0}\Big]+O(\tilde\chi^3+u_{k_0}^3)\nonumber\\
&=\Big(-\frac{b}{2}+\chi_{k_0}^*\mu \frac{b}{a}\frac{k_0^4\pi^4}{L^4}\frac{1}{\big(\mu+\frac{k_0^2\pi^2}{L^2}\big)^2}\Big)u_{k_0}^2+O(\tilde\chi^3+u_{k_0}^3).
\end{align}
When $k\not =2k_0$, we have
\begin{align}
\label{ode-eq-uk-4}
&  -  \frac{b}{2} \Big (\sum_{i-j=k} u_i u_j + \sum_{i-j=-k}  u_i u_j+\sum_{i+j=k} u_iu_j\Big) +\frac{\chi_{k_0}^* \mu}{\nu} g_k(u_0,u_1,\cdots)\nonumber\\
&=O(\tilde\chi^3+u_{k_0}^3).
\end{align}
By  \eqref{ode-eq-uk-1}-\eqref{ode-eq-uk-4}, we obtain that
\begin{equation}
\label{coefficients-eq2}
a_{2k_0,1}=\frac{1}{\lambda_{2k_0}(\chi_{k_0}^*,a,\mu)}\Big(\frac{b}{2}-\frac{\chi_{k_0}^* \mu b  k_0^4 \pi^4}{a(\mu L^2+k_0^2\pi^2)^2}\Big),\,\,  a_{2k_0,2}=a_{2k_0,3}=0,
\end{equation}
and
\begin{equation}
\label{coefficients-eq3}
a_{k,1}=a_{k,2}=a_{k,3}=0,\quad k\not = k_0,2k_0,\,\, k=1,2,\cdots.
\end{equation}

Now, by \eqref{coefficients-eq1}-\eqref{coefficients-eq3}, we have  on $\mathcal{M}$,
\begin{align*}
&g_{k_0}(u_0,u_1,u_2,\cdots)\nonumber\\
&=\frac{b}{a}\frac{k_0^2\pi^2}{L^2}\frac{1}{2}\big[ 2 u_0 v_{k_0}+2 u_{k_0} v_{2k_0}-u_{2k_0}v_{k_0}\big] -\frac{\mu}{\nu}\frac{b}{a}\frac{k_0^2\pi^2}{L^2}\frac{1}{2}\big[2 v_0 v_{k_0}+v_{k_0}v_{2k_0}\big]\nonumber\\
&\quad -\frac{\mu}{\nu}\frac{b^2}{a^2}\frac{k_0^2\pi^2}{L^2}\frac{1}{4} u_{k_0} v_{k_0}^2 +\frac{\mu^2}{\nu^2}\frac{b^2}{a^2}\frac{k_0^2\pi^2}{L^2}\frac{1}{4} v_{k_0}^3+
O(u_{k_0}^4)\nonumber\\
&=\frac{b k_0^2 \pi^2}{2a}\Big(\frac{2\nu}{\mu L^2+k_0^2\pi^2} u_0 u_{k_0}+\frac{\nu \mu L^2-2{ \nu}k_0^2\pi^2}{(\mu L^2+k_0^2\pi^2)(\mu L^2+4 k_0^2\pi^2)}u_{k_0}u_{2k_0}\Big)\nonumber\\
&\quad -\frac{b k_0^2\pi^2}{2a}\Big(\frac{2\nu}{\mu L^2+k_0^2\pi^2}u_0 u_{k_0}+
\frac{\mu \nu L^2}{(\mu L^2+k_0^2\pi^2)(\mu L^2+4 k_0^2\pi^2)} u_{k_0}u_{2k_0}\Big)\nonumber\\
&\quad -\frac{\mu b^2 k_0^2 \pi^2}{4 a^2}\frac{\nu L^2}{(\mu L^2+k_0^2\pi^2)^2}u_{k_0}^3
+\frac{\mu^2b^2 k_0^2\pi^2}{4 a^2}\frac{\nu L^4}{(\mu L^2+k_0^2\pi^2)^3} u_{k_0}^3+
O(\tilde \chi^4+u_{k_0}^4)\nonumber\\
&=-\frac{b k_0^4\pi^4}{a (\mu L^2+k_0^2\pi^2)(\mu L^2+4k_0^2\pi^2)} u_{k_0}u_{2k_0}
-\frac{\mu b^2 k_0^4 \pi^4 \nu L^2}{4 a^2 (\mu L^2+k_0^2\pi^2)^3} u_{k_0}^3+O(\tilde\chi^4+u_{k_0}^4)\nonumber\\
&=\Big[-\frac{b k_0^4\pi^4}{a (\mu L^2+k_0^2\pi^2)(\mu L^2+4k_0^2\pi^2)} a_{2k_0,1}
-\frac{\mu b^2 k_0^4 \pi^4 \nu L^2}{4 a^2 (\mu L^2+k_0^2\pi^2)^3}\Big] u_{k_0}^3+O(\tilde \chi^4+u_{k_0}^4)
\end{align*}
Therefore, on the center manifold $\mathcal{M}$, the dynamics is determined by the following ODE,
\begin{align}
\label{ode-on-center-manifold-1}
\frac{d u_{k_0}}{dt}&= \frac{\tilde \chi\mu k_0^2\pi^2}{\mu L^2 +k_0^2 \pi^2}u_{k_0}  -  \frac{b}{2} \Big (\sum_{i-j=k_0} u_i u_j + \sum_{i-j=-k_0}  u_i u_j+\sum_{i+j=k_0} u_iu_j\Big)\nonumber\\
&\quad
\qquad\quad +
\frac{(\chi_{k_0}^*+\tilde\chi)\mu}{\nu} g_{k_0}(u_0,u_1,u_2,\cdots)\nonumber\\
&= \frac{\tilde \chi\mu k_0^2\pi^2}{\mu L^2 +k_0^2 \pi^2}u_{k_0}  -b\Big( 2a_{0,1}+a_{2k_0,1}+\frac{\chi_{k_0}^* \mu k_0^4\pi^4}{\nu a (\mu L^2+k_0^2\pi^2)(\mu L^2+4k_0^2\pi^2)} a_{2k_0,1}\nonumber\\
&\qquad
+\frac{\chi_{k_0}^* \mu^2 b k_0^4 \pi^4 L^2}{4 a^2 (\mu L^2+k_0^2\pi^2)^3}\Big) u_{k_0}^3 +O(\tilde\chi^4+u_{k_0}^4).
\end{align}
By \eqref{a-0-k-eq}, \eqref{coefficients-eq1}, and \eqref{coefficients-eq2}, equation \eqref{ode-on-center-manifold-1}
can be written as
\begin{equation}
\label{ode-on-center-manifold-2}
\frac{du_{k_0}}{dt}=\alpha_{k_0}\tilde \chi u_{k_0}-\beta_{k_0} u_{k_0}^3 +O(\tilde\chi^4+u_{k_0}^4),
\end{equation}
where
$$
\alpha_{k_0}=\frac{\mu k_0^2\pi^2}{\mu L^2 +k_0^2 \pi^2}.
$$
Pitchfork bifurcation then occurs in \eqref{ode-on-center-manifold-2} near $0$
provided that $\beta_{k_0}\not =0$. This proves (1).

Moreover, by the above arguments,  it is clear that \eqref{two-branches-eq},
\eqref{upper-branch-eq}, and \eqref{lower-branch-eq} hold.

\smallskip

(2)  Let $k^*\ge 1$ be such that
$\chi_{k^*}^*=\chi^*$.  If $\chi_{k_0}^*>\chi^*$, then  $\lambda_{k^*}(\chi_{k_0}^*,a,\mu)>0$
and the positive constant solution $(\frac{a}{b},\frac{\nu}{\mu}\frac{a}{b})$ is linearly unstable. By the general  perturbation theory (see \cite[Theorem 5.1.3]{Hen}), any solution {with the $u$-component belonging to $\Gamma_{k_0}(\chi)\setminus\frac{a}{b}$} is linearly unstable for
$\chi\approx\chi_{k_0}^*$.

\smallskip

(3) If $\chi_{k_0}^*=\chi^*$ and $\beta_{k_0}>0$,  then \eqref{ode-on-center-manifold-2}
experiences super-pitchfork bifurcation near $0$ when $\tilde\chi$ passes through $0$.
This implies that  any solution {with the $u$-component belonging to $\Gamma_{k_0}(\chi)\setminus\frac{a}{b}$}  is linearly  stable for
$\chi\approx\chi_{k_0}^*$.

If $\chi_{k_0}^*=\chi^*$ and $\beta_{k_0}<0$,  then \eqref{ode-on-center-manifold-2}
experiences  sub-pitchfork bifurcation near $0$ when $\tilde\chi$ passes through $0$.
This implies that  any solution {with the $u$-component belonging to $\Gamma_{k_0}(\chi)\setminus\frac{a}{b}$}  is linearly  unstable for
$\chi\approx\chi_{k_0}^*$.
The theorem is thus proved.
\end{proof}

\section{Global bifurcation and  properties of non-constant stationary solutions}

In this section, we study the global extension of the local bifurcation branches $\Gamma_{k}(\chi)$ for $\chi\approx\chi_k^*$ and properties of non-constant stationary  solutions.

We first introduce some notations. Let
$$
X=\{u\in H^2((0,L))\,|\, u_x(0)=u_x(L)=0, u(x)>0\,\, {\rm for}\,\, x\in [0,L]\},\quad Y=L^2((0,L))
$$
Let $F:(0,\infty)\times X\to Y$ be defined by
$$
F(\chi,u)=u_{xx}-\chi (\frac{u}{v} v_x)_x+u(a-b u),
$$
where $v$ is the solution of \eqref{new-new-v-eq}.
Let
 $$
S_k=\{(\chi,u)\,|\, 0< \chi<\chi_{k}^*+1, \,\, u\in X, \,\, u\not =\frac{a}{b},\,\, F(\chi,u)=0\}.
$$

For given $k\ge 1$,  if $\lambda_{l}(\chi_k^*,a,\mu)\not =0$ for all $l\not =k$, then there is
 $\epsilon_k>0$  such that $\chi_k^*-\epsilon_k>0$ and
$\lambda_l(\chi,a,\mu)\not =0$ for $\chi\in (\chi_k^*-\epsilon_k,\chi_k^*+\epsilon_k)$. For such $k$, let
$$
S_{0,k}=\{(\chi,\frac{a}{b})\,|\, \chi_k^*-\epsilon_k<\chi<\chi_k^*+\epsilon_k\},
$$
 and $\mathcal{C}_k$ be the connected component of
$\bar S_k\cup S_{0,k}$ containing $S_{0,k}$.
In addition, if  $\beta_{k}\not =0$, then by Theorem \ref{local-bif-thm}, pitchfork-type bifurcation occurs in \eqref{main-eq0} near $(\frac{a}{b},\frac{\nu}{\mu}\frac{a}{b})$ when $\chi$ passes through $\chi_k^*$. In such case,
let
$$
\Gamma_{k}^\pm=\begin{cases}
\{(\chi,\Gamma_{k}^\pm(\chi)\,|\, 0<\chi-\chi_k^*\ll 1\}\quad {\rm if}\; \beta_k>0\cr\cr
\{(\chi,\Gamma_{k}^\pm(\chi)\,|\, 0<\chi_k^*-\chi\ll 1\}\quad {\rm if}\; \beta_k<0.
\end{cases}
$$

Next, we state the main results of this section.   The first theorem is on the global bifurcation.

\begin{theorem} [Global bifurcation]
\label{global-bif-thm}
For given $k\ge 1$, assume that  $\beta_{k}\not =0$ and $\lambda_l(\chi_{k}^*,a,\mu)\not =0$ for any $l\not =k$.
Let $\mathcal{C}_k^\pm $ be the connected component of
$\mathcal{C}_k\setminus \Gamma_k^\mp$ which contains $\Gamma_k^\pm$. Then each of the sets $\mathcal{C}_k^+$ and $\mathcal{C}_k^-$ satisfies one of the following:
(i) it is  not compact; (ii) it contains a point $(\chi,\frac{a}{b})$ with  $\chi\not =\chi_k^*$;
or (iii) it contains a point $(\chi,\frac{a}{b}+w)$, where $w\not =0$ and $w\in W$, where $W$ complements ${\rm span}\{\cos\frac{k\pi x}{L}\}$. Moreover, if $\mathcal{C}_k^+$ (resp.
$\mathcal{C}_k^-$) is not compact, then $\mathcal{C}_k^+$ (resp. $\mathcal{C}_k^-$) extends to infinity in the positive direction of $\chi$.
\end{theorem}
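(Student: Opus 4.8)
The plan is to recast the stationary problem as a bifurcation equation for the single unknown $u$ and then invoke the unilateral global bifurcation theorem of Shi and Wang \cite{ShWa}. First I would reduce \eqref{main-eq0} to the scalar equation $F(\chi,u)=0$ with $F$ as defined above: for $u\in X$ one has $u>0$, so the solution $v$ of \eqref{new-new-v-eq} is strictly positive on $[0,L]$ and depends linearly and smoothly (as an order $-2$ operator) on $u$; hence $\frac{u}{v}$ is smooth and $F\colon(0,\infty)\times X\to Y$ is $C^2$ with $F(\chi,\frac{a}{b})=0$ for every $\chi$. The analysis then proceeds along this trivial branch $\{(\chi,\frac{a}{b})\}$.

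Next I would verify the bifurcation hypotheses at $(\chi_k^*,\frac{a}{b})$. The Fréchet derivative $F_u(\chi,\frac{a}{b})$ coincides, after the shift $u\mapsto u-\frac{a}{b}$, with the operator $A$ studied in Section~2; it is a relatively compact perturbation of $\partial_{xx}-a$, hence Fredholm of index zero from $X$ into $Y$, and the same structural computation shows $F_u(\chi,u)$ is Fredholm of index zero at every solution in $S_k$, which supplies the properness requirement of \cite{ShWa}. By the spectral identity \eqref{lambda-k-eq} together with the assumption $\lambda_l(\chi_k^*,a,\mu)\neq 0$ for all $l\neq k$, the kernel of $F_u(\chi_k^*,\frac{a}{b})$ is exactly the one-dimensional space $\mathrm{span}\{\cos\frac{k\pi x}{L}\}$. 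For the transversality condition I would use that $\frac{\partial}{\partial\chi}\lambda_k(\chi,a,\mu)=\alpha_k>0$, with $\alpha_k$ as in \eqref{alpha-k0-eq}, so the simple eigenvalue $\lambda_k$ crosses zero with nonzero speed; equivalently, since $\mathrm{Range}\,F_u(\chi_k^*,\frac{a}{b})$ is the $\phi_k$-orthogonal eigenspace, one has $F_{\chi u}(\chi_k^*,\frac{a}{b})\,\cos\frac{k\pi x}{L}\notin\mathrm{Range}\,F_u(\chi_k^*,\frac{a}{b})$.

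With these verified, Crandall--Rabinowitz yields the local pitchfork already recorded in Theorem~\ref{local-bif-thm}, and the unilateral global bifurcation theorem of \cite{ShWa} applies to the continuum $\mathcal{C}_k$ and to each sub-continuum $\mathcal{C}_k^{\pm}$ obtained by deleting the opposite local half-branch $\Gamma_k^{\mp}$. It gives, for each of $\mathcal{C}_k^{+}$ and $\mathcal{C}_k^{-}$, the Rabinowitz-type trichotomy: either the set is non-compact, or it meets the trivial branch again at some $(\chi,\frac{a}{b})$ with $\chi\neq\chi_k^*$, or it contains a point $(\chi,\frac{a}{b}+w)$ with $0\neq w$ in the complement $W$ of $\mathrm{span}\{\cos\frac{k\pi x}{L}\}$. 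This is exactly alternatives (i)--(iii).

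It remains to prove the ``moreover'' claim, and this is where the main work lies. The goal is to show that a non-compact $\mathcal{C}_k^{+}$ (respectively $\mathcal{C}_k^{-}$) cannot remain in a bounded $\chi$-range, hence must reach $\chi=+\infty$. A subset of $(0,\infty)\times X$ fails to be precompact only if $\chi$ is unbounded, or $\chi$ accumulates at $0$, or the $X$-norm of $u$ blows up over a compact $\chi$-interval, so I would exclude the last two. For $\chi\to 0^{+}$ the limiting scalar problem $u_{xx}+u(a-bu)=0$ with Neumann data admits only the constant positive solution $\frac{a}{b}$, so non-constant positive solutions cannot accumulate at $\chi=0$. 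To rule out finite-$\chi$ blow-up I would invoke the a priori bounds on non-constant positive stationary solutions obtained in this paper (Theorem~\ref{property-bif-solu-thm1}): together with elliptic regularity they bound $u$ in $H^2$ uniformly over each compact $\chi$-interval, giving compactness of the corresponding part of the continuum. The principal obstacle is securing these estimates uniformly in $\chi$; in particular, because of the singular sensitivity $\frac{u}{v}$, they hinge on a uniform positive lower bound for $v$, which is the delicate quantitative ingredient. Once this is in place, non-compactness can only be realized through $\chi\to\infty$, which is the assertion.
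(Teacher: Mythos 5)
Your proposal follows essentially the same route as the paper: verify the Fredholm, simple-kernel and transversality hypotheses along the trivial branch $(\chi,\frac{a}{b})$, invoke \cite[Theorem 4.4]{ShWa} for the trichotomy, and then use a priori bounds on positive stationary solutions to force a non-compact continuum to escape through $\chi\to\infty$. The one ingredient you flag as the ``principal obstacle'' --- a positive lower bound for $v$ and bounds on $u$ that are uniform on compact $\chi$-intervals --- is exactly what Theorem \ref{property-bif-solu-thm} supplies (parts (2)--(4): $v^*\ge\delta^*\int_0^L u^*$ with $\delta^*$ independent of the solution, $\sup_{[0,L]} u^*\ge \frac{a}{b}$, and $|u^*|+|u_x^*|\le C_1(\chi,a,b,\mu)$), and that is the result you should be citing rather than Theorem \ref{property-bif-solu-thm1}, which concerns limits as $\chi_n\to\infty$.
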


\begin{remark}
\label{bifurcation-rk1}
\begin{itemize}

\item[(1)]
For any {$u \in \Gamma_1^+(\chi)$}  (resp. $\Gamma_1^-(\chi)$) with $\chi\approx \chi_1^*$, $u_x(x)<0$  (resp. $u_x(x)>0$) for $x\in (0,L)$, and $u(0)>\frac{a}{b}$
(resp. $u(L)>\frac{a}{b}$).

\item[(2)]
For given $k\ge 1$,  if $k={k^*}$ and  super-critical pitchfork bifurcation  occurs when $\chi$ passes through $\chi^*$, it is observed numerically that  the global bifurcation diagram is of the form in Figure \ref{bifurcation}(a) and the bifurcation solutions are locally stable (see numerical simulations in subsection {\ref{l-1-sec}}), where the red part consists of {the u-components of} local bifurcation solutions and the blue parts are the global extension of the {u-components of} local bifurcation solutions.

\item[(3)] For given $k\ge 1$, if $k={k^*}$ and   sub-critical pitchfork occurs when $\chi$ passes through $\chi^*$, it is observed numerically that  the global bifurcation diagram is of the form in Figure \ref{bifurcation}(b) and the bifurcation solutions {with u-components} lying in the red and green parts are unstable and the bifurcation solutions {with u-components} lying  in the blue parts  are locally stable (see numerical simulations in subsections {\ref{l-6-sec}} and {\ref{a-2-l-6-sec}}), where the red part consists of  {the u-components of} local bifurcation solutions and the green and blue parts are the global extension of the {u-components of} local bifurcation solutions.
\end{itemize}

\begin{figure}[!ht]
\begin{center}
\subfigure[]{
\resizebox*{0.36\linewidth}{!}{\includegraphics{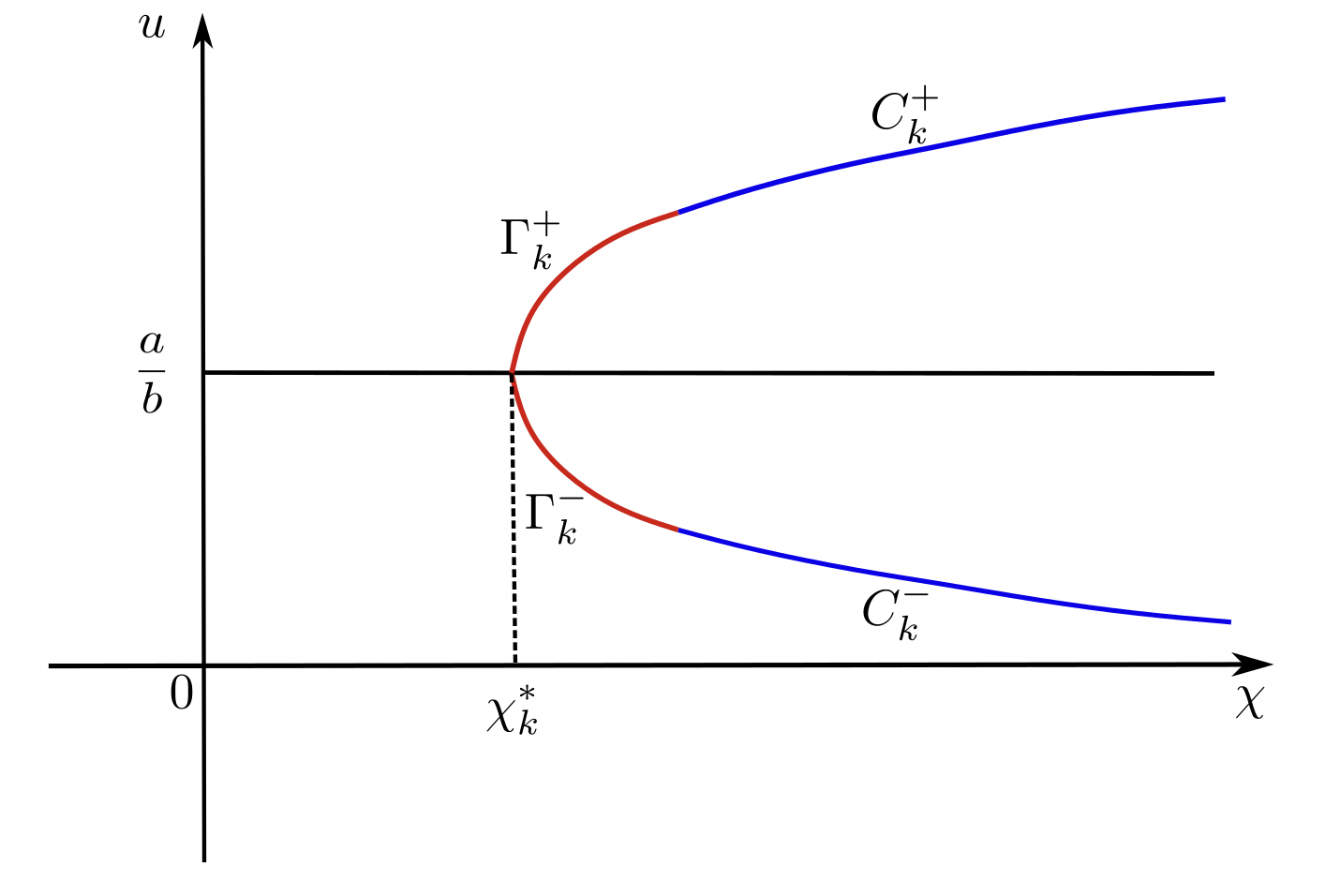}}
}%\label{para-elli-chi-11.98-u0-1+0.5cospix-u}
\subfigure[]{\resizebox*{0.36\linewidth}{!}{\includegraphics{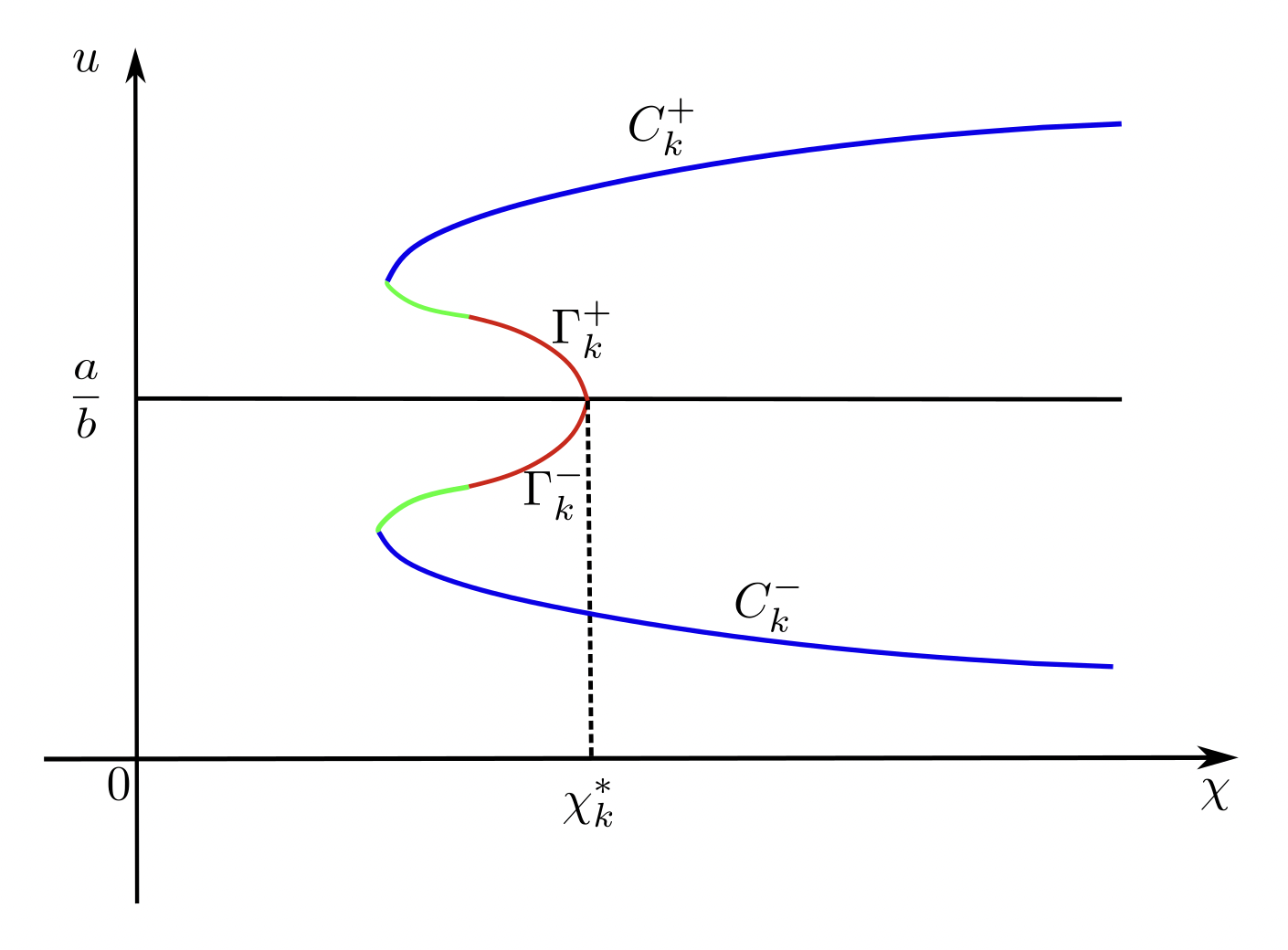}}}%\label{para-elli-chi-5-u0-1+0.5cos-u}}
%%%\subfigure[]{
%%%\resizebox*{0.24\linewidth}{!}{\includegraphics{para-elli-chi-5-u0-1+0.5cos-v-new}}%\label{para-elli-chi-11.98-u0-1+0.5cospix-v}
%%%\resizebox*{0.24\linewidth}{!}{\includegraphics{para-elli-chi-5-u0-1+0.5cos-v-2D}}\label{para-elli-chi-5-u0-1+0.5cos-v}}
%\subfigure[]{
%\resizebox*{0.24\linewidth}{!}{\includegraphics{para-elli-chi-5-u0-1-0.5cos-u-new}}
%\resizebox*{0.24\linewidth}{!}{\includegraphics{para-elli-chi-5-u0-1-0.5cos-u-2D}}
%\label{para-elli-chi-5-u0-1-0.5cos-u}}
%\subfigure[]{
%\resizebox*{0.24\linewidth}{!}{\includegraphics{para-elli-chi-5-u0-1-0.5cos-v-new}}
%\resizebox*{0.24\linewidth}{!}{\includegraphics{para-elli-chi-5-u0-1-0.5cos-v-2D}
%\label{para-elli-chi-5-u0-1-0.5cos-v}}
\caption{(a) super-critical pitchfork bifurcation (b)  sub-critical pitchfork bifurcation %((a) and (b)) %or initial function $u_0=1-0.5cos(\pi x)% %((c) and (d)).
}
\label{bifurcation}
\end{center}
\end{figure}

\end{remark}

The next two  theorems are on the properties of non-constant stationary solutions.

\begin{theorem} [Properties of non-constant stationary solutions]
\label{property-bif-solu-thm}
Let $(u^*(x),v^*(x))$ be a non-constant positive  stationary solution of \eqref{main-eq0}.
Then the following hold.
\begin{itemize}

\item[(1)]
\begin{equation}
\label{bound-v-eq2}
\Big|\frac{v_x^*(x)}{v^*(x)}\Big|\le \sqrt \mu \quad \forall\, x\in [0,L].
\end{equation}

\item[(2)]  There is a positive constant $\delta^*>0$ independent of $u^*$ such that
\begin{equation}
\label{lower-bound-v-eq}
v^*(x)\ge \delta^* \int_0^L u^*(x)dx \quad \forall\, x\in [0,L].
\end{equation}

\item[(3)]
\begin{equation}
\label{bound-u-eq1}
0<\inf_{x\in [0,L]} u^*(x)< \frac{a}{b}< \sup_{x\in [0,L]}u^*(x),
\end{equation}
and
\begin{equation}
\label{bound-u-eq2}
\int_0^L u^*(x)dx< \frac{aL}{b},\quad \int_0^L (u^*)^2(x)dx< \frac{a^2 L}{b^2}.
\end{equation}

\item[(4)]  There is $C_1(\chi,a,b,\mu)>0$ such that
\begin{equation}
\label{bound-u-eq3}
|u^*(x)|, \,\, |u_x^*(x)|\le C_1(\chi, a,b,\mu)\quad \forall\, x\in [0,L].
\end{equation}

\item[(5)] There is $C_2(a,b,\mu,\nu)>0$ such that
\begin{equation}
\label{bound-v-eq1}
\|v^*\|_{H^2(0,L)}\le C_2(a,b,\mu,\nu).
\end{equation}

\item[(6)]
If $u^*(x)$ is monotone increasing (respectively, decreasing) on $(0,L)$, then
  $v^*(x)$ is strictly increasing (respectively, decreasing) on $(0,L)$.
\end{itemize}
\end{theorem}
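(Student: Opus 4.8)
The plan is to establish the six assertions in the order listed, feeding each into the next; throughout I write $w=v_x^*/v^*$ and note that $v^*>0$ on $[0,L]$ by the maximum principle applied to $v^*_{xx}-\mu v^*=-\nu u^*<0$ with Neumann data. For (1) I would introduce $P:=(v_x^*)^2-\mu (v^*)^2$ and compute $P_x=2v_x^*(v_{xx}^*-\mu v^*)=-2\nu u^*v_x^*$. At the endpoints $v_x^*=0$, so $P(0),P(L)<0$, while at any interior critical point $P_x=0$ forces $v_x^*=0$ (as $u^*>0$), whence $P=-\mu(v^*)^2<0$ there. Thus $\max_{[0,L]}P\le 0$, i.e.\ $(v_x^*)^2\le\mu(v^*)^2$, which is \eqref{bound-v-eq2}. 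For (2) I read (1) as $|(\log v^*)_x|\le\sqrt\mu$, so $v^*(x)\ge e^{-\sqrt\mu L}v^*(y)$ for all $x,y$; integrating in $y$ and using $\mu\int_0^L v^*=\nu\int_0^L u^*$ (obtained by integrating the $v$-equation and invoking the Neumann conditions) gives \eqref{lower-bound-v-eq} with $\delta^*=\nu e^{-\sqrt\mu L}/(\mu L)$, manifestly independent of $u^*$.

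For (3) the key device is integrating the $u$-equation over $[0,L]$: both $\int_0^L u_{xx}^*$ and $\chi\int_0^L(\tfrac{u^*}{v^*}v_x^*)_x$ vanish by the Neumann conditions, leaving $a\int_0^L u^*=b\int_0^L(u^*)^2$. Since $u^*$ is non-constant, Cauchy--Schwarz gives the \emph{strict} inequality $(\int u^*)^2<L\int(u^*)^2$, which combined with the identity yields $\int_0^L u^*<aL/b$ and then $\int_0^L(u^*)^2=\tfrac{a}{b}\int u^*<a^2L/b^2$, i.e.\ \eqref{bound-u-eq2}. The two-sided bound \eqref{bound-u-eq1} follows because $\int_0^L u^*(a-bu^*)=0$ forces $a-bu^*$ to change sign: it cannot be one-signed and not identically zero, as $u^*>0$ would then make the integral nonzero, and it cannot vanish identically without making $u^*\equiv a/b$ constant; hence $u^*<a/b$ somewhere and $u^*>a/b$ somewhere.

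For (4) I switch to a first-order representation: integrating the $u$-equation from $0$ to $x$ and using $u_x^*(0)=v_x^*(0)=0$ gives $u_x^*(x)=\chi u^*(x)w(x)-\int_0^x u^*(a-bu^*)\,dy$. By (1) and the $L^1$-bounds from (3) this yields $|u_x^*(x)|\le\chi\sqrt\mu\,u^*(x)+C$ with $C=C(a,b,L)$. Choosing $\bar x$ with $u^*(\bar x)=\tfrac1L\int u^*<a/b$ and integrating $|u_x^*|$ bounds $\|u^*\|_\infty$, hence also $\|u_x^*\|_\infty$, by a constant $C_1(\chi,a,b,\mu)$, proving \eqref{bound-u-eq3}. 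For (5) I test the $v$-equation against $v^*$ to get $\int(v_x^*)^2+\mu\int(v^*)^2=\nu\int u^*v^*\le\nu\|u^*\|_2\|v^*\|_2$, so $\|v^*\|_2\le\tfrac{\nu}{\mu}\|u^*\|_2$; since $\|u^*\|_2<a\sqrt L/b$ by (3), this bounds $\|v^*\|_2$ and $\|v_x^*\|_2$, and then $v_{xx}^*=\mu v^*-\nu u^*$ bounds $\|v_{xx}^*\|_2$, all by constants depending only on $a,b,\mu,\nu$, giving \eqref{bound-v-eq1}. For (6) I differentiate the $v$-equation so that $\psi:=v_x^*$ solves $-\psi_{xx}+\mu\psi=\nu u_x^*$ with $\psi(0)=\psi(L)=0$; when $u^*$ is increasing, $\nu u_x^*\ge 0$ and $\not\equiv 0$, so the strong maximum principle for $-\partial_{xx}+\mu$ under Dirichlet conditions forces $\psi>0$ on $(0,L)$, i.e.\ $v^*$ is strictly increasing, the decreasing case being identical after a sign change.

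The step I expect to carry the most weight is (4): unlike the others it must deliver a genuine pointwise bound from the $L^2$/$L^1$ information of (3), and it must track the $\chi$-dependence honestly. The representation $u_x^*=\chi u^*w-\int_0^x u^*(a-bu^*)$ is the crux, since it converts the second-order chemotaxis term into the pointwise-controlled factor $w=v_x^*/v^*$ bounded by (1); once this is in hand, the remaining estimates for $u$ and $v$ are routine bookkeeping built on the integral identities of (3).
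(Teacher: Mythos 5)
Your proof is correct, and for items (3)--(6) it follows essentially the same path as the paper: the integral identity $a\int_0^L u^*=b\int_0^L(u^*)^2$ with strict Cauchy--Schwarz for (3), the first-order representation $u_x^*=\chi u^*\,\frac{v_x^*}{v^*}-\int_0^x u^*(a-bu^*)$ combined with (1) and a base point where $u^*$ is controlled for (4) (the paper writes the integrating-factor formula explicitly where you invoke Gronwall, but these are the same estimate), $L^2$ energy/elliptic bounds for (5), and the Dirichlet problem for $\psi=v_x^*$ with the strong maximum principle for (6). Where you genuinely diverge is in (1) and (2). The paper proves (1) by evenly reflecting $u^*$ to a $2L$-periodic function on $\RR$, solving the $v$-equation on the whole line, and citing \cite[Lemma 2.2]{SaShXu}; your argument with $P=(v_x^*)^2-\mu(v^*)^2$, $P_x=-2\nu u^*v_x^*$, and inspection of $P$ at endpoints and interior critical points is a clean, self-contained replacement that in fact yields the strict inequality $|v_x^*/v^*|<\sqrt{\mu}$ and avoids both the reflection construction and the external citation. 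Likewise, the paper obtains (2) by citing \cite[Lemma 2.1]{FuWiYo}, whereas you derive it directly from (1) via the Harnack-type bound $v^*(x)\ge e^{-\sqrt{\mu}L}v^*(y)$ together with $\mu\int_0^L v^*=\nu\int_0^L u^*$, producing the explicit constant $\delta^*=\nu e^{-\sqrt{\mu}L}/(\mu L)$. What your route buys is a fully self-contained proof with explicit constants; what the paper's route buys is brevity and consistency with lemmas already established elsewhere in the literature for more general settings.
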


\begin{remark}
\label{property-bif-solu-rk}
By Theorem \ref{property-bif-solu-thm}(3), the set of $u$-components of  non-constant positive stationary solutions is bounded in $L^2(0,L)$ as $\chi\to\infty$. By Theorem \ref{property-bif-solu-thm}(5),  the set of $v$-components of non-constant positive stationary solutions is bounded in $H^2(0,L)$  as $\chi\to\infty$ and hence is bounded in $C^{1,\gamma}([0,L])$ for some ${\gamma}>0$. It remains open whether the set of $u$-components of  non-constant positive stationary solutions is bounded in $C([0,L])$ as $\chi\to\infty$.
\end{remark}

\begin{theorem} [Properties of non-constant stationary solutions]
\label{property-bif-solu-thm1}
Let $\{(u(\cdot;\chi_n),v(\cdot;\chi_n))\}_{n\ge 1}$ be a sequence of non-constant positive stationary solutions of \eqref{main-eq0} with $\chi=\chi_n$ and $\chi_n\to\infty$.
\begin{itemize}
\item[(1)] If $v^*\in H^2(0,L)$ is such that
$$
\lim_{n\to\infty} v(x;\chi_n)= v^*(x)\quad \text{in}\,\, C^1([0,L])
\quad {\rm and}\,\,\,
\lim_{n\to\infty} v(x;\chi_n)=v^*(x)\quad \text{weakly in}\,\, H^2(0,L),
$$
then there is a constant $0\le \bar v\le \frac{\nu}{\mu}\frac{a}{b}$ such that
$$
v^*(x)=\bar v\quad a.e.\,\, x\in [0,L].
$$

\item[(2)] If $u^*\in L^2(0,L)$ is such that
$$
\lim_{n\to\infty} u(x;\chi_n)=u^*(x)\quad \text{weakly in}\,\, L^2(0,L)
\quad {\rm or}\,\,\,
\lim_{n\to\infty}u(x;\chi_n)=u^*(x)\quad a.e.\,\, x\in [0,L],
$$
then there is a constant $0\le \bar u\le \frac{a}{b}$ such that
$$
u^*(x)=\bar u\quad a.e.\,\, x\in [0,L].
$$
\end{itemize}
\end{theorem}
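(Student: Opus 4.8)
The plan is to derive part (1) from a single weighted integral identity, satisfied by each stationary solution, which after letting $\chi\to\infty$ forces the gradient of the limiting $v$-profile to vanish; part (2) will then follow by applying part (1) along a subsequence and reading off $u^*$ from the second equation of \eqref{main-eq0}.

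For part (1), fix a stationary solution $(u,v)=(u(\cdot;\chi_n),v(\cdot;\chi_n))$ and set $p:=v_x/v$, so that $|p|\le\sqrt\mu$ by Theorem \ref{property-bif-solu-thm}(1). Writing the first equation of \eqref{main-eq0} in flux form $(u_x-\chi u p)_x=-u(a-bu)$ and using the Neumann conditions together with $p(0)=p(L)=0$, the flux $J:=u_x-\chi u p$ satisfies $J(0)=J(L)=0$ and $J(x)=-\int_0^x u(a-bu)\,dy$; the $L^2$-bounds in \eqref{bound-u-eq2} then give $|J|\le 2a^2L/b=:M$ uniformly in $n$. Multiplying $u_x-\chi u p=J$ by $p$, integrating over $[0,L]$, integrating by parts, and using $p_x=\mu-\nu u/v-p^2$ (which comes from the second equation of \eqref{main-eq0}), I obtain
\[
(\chi-1)\int_0^L u\,p^2\,dx=-\int_0^L Jp\,dx-\mu\int_0^L u\,dx+\nu\int_0^L\frac{u^2}{v}\,dx .
\]
The first two terms on the right are bounded (by $M\sqrt\mu L$, and by $0$ from above), and the crucial point is that the last term is bounded \emph{uniformly in $\chi$}: the lower bound \eqref{lower-bound-v-eq} gives $\int_0^L u^2/v\le(\delta^*\int_0^L u)^{-1}\int_0^L u^2$, and the mass identity $a\int_0^L u=b\int_0^L u^2$ (the integrated first equation) reduces this to $a/(b\delta^*)$. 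Hence $\int_0^L u\,p^2\,dx\le K/(\chi-1)\to0$ with $K$ independent of $n$.

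Next I would transfer this to the limit. On the open set $U:=\{x:\,v^*_x(x)\neq0\}$ one has $v^*>0$ (a nonnegative $C^1$ function vanishes only at critical points, where $v^*_x=0$, including the endpoints by the Neumann condition), so $p_n\to v^*_x/v^*$ uniformly on any compact $K$ contained in a level set $\{|v^*_x/v^*|>\epsilon\}\subset U$; combined with $\int_0^L u_n p_n^2\to0$ this yields $\int_K u_n\to0$. Feeding this into $\int_K v_{n,xx}=\mu\int_K v_n-\nu\int_K u_n$ and using the weak $H^2$-convergence $v_{n,xx}\rightharpoonup v^*_{xx}$ gives $\int_K v^*_{xx}=\mu\int_K v^*$ for every such $K$, i.e.\ $v^*_{xx}=\mu v^*$ a.e.\ on $U$. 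Since $v^*>0$ there, a standard bootstrap makes $v^*$ classically $C^2$ and strictly convex on each component $(\alpha,\beta)$ of $U$, while $v^*_x(\alpha)=v^*_x(\beta)=0$ (these are endpoints of $U$, or of $[0,L]$ with the Neumann condition); strict monotonicity of $v^*_x$ then makes this impossible unless $\alpha=\beta$. Therefore $U=\emptyset$, $v^*\equiv\bar v$ is constant, and integrating the second equation gives $\bar vL=\tfrac{\nu}{\mu}\lim_n\int_0^L u_n\le\tfrac{\nu}{\mu}\tfrac{aL}{b}$, so $\bar v\in[0,\tfrac{\nu}{\mu}\tfrac ab]$.

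For part (2), since $\{v(\cdot;\chi_n)\}$ is bounded in $H^2$ by Theorem \ref{property-bif-solu-thm}(5), I extract a subsequence converging in $C^1$ and weakly in $H^2$ to a limit that is constant by part (1); passing to the weak $L^2$-limit in $v_{xx}=\mu v-\nu u$ (using that an a.e.\ limit coincides with the weak $L^2$-limit for an $L^2$-bounded sequence) then forces $u^*=\tfrac{\mu}{\nu}\bar v$, a constant in $[0,\tfrac ab]$. I expect the main obstacle to be the uniform control of $\int_0^L u^2/v$: because $u_n$ may form spikes, so $\norminfty{u_n}$ is uncontrolled, one cannot bound the chemotactic term pointwise, and it is precisely the interplay between the lower bound \eqref{lower-bound-v-eq} and the mass identity that tames it. The secondary delicate point is the convexity argument ruling out a nonconstant $v^*$, which must accommodate that $v^*$ may touch zero and that $u_n$ itself need not converge.
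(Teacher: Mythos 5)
Your proof is correct, but it takes a genuinely different route from the paper's. The paper integrates the first equation once to obtain $\frac{v_{nx}}{v_n}u_n=\frac{1}{\chi_n}\bigl(u_{nx}-f_n\bigr)$ with $f_n$ uniformly bounded, extracts a weak $L^2$-limit $u^*$ of $u_n$, and shows $\int_{x_1}^{x_2}\frac{v^*_x}{v^*}u^*\,dx=\lim_n\frac{1}{\chi_n}\bigl(u_n(x_2)-u_n(x_1)\bigr)=0$ by restricting $x_1,x_2$ to the full-measure set where $\liminf_n u_n<\infty$ (Fatou); this yields $v^*_x u^*=0$ a.e., and an energy identity between the extrema of $v^*$ then forces $v^*$ to be constant (the case $u^*\equiv 0$ is split off first so that \eqref{lower-bound-v-eq} provides the uniform positive lower bound on $v_n$ needed to pass to the limit in $v_{nx}/v_n$). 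You instead prove the quantitative estimate $\int_0^L u_n\,(v_{nx}/v_n)^2\,dx\le K/(\chi_n-1)$ with $K$ independent of $n$ --- your identity checks out, using $p_x=\mu-\nu u/v-p^2$ and $p(0)=p(L)=0$, and the uniformity rests, exactly as you flag, on combining \eqref{lower-bound-v-eq} with the mass identity $a\int_0^L u_n=b\int_0^L u_n^2$ to control $\int_0^L u_n^2/v_n$ --- and you then localize on $\{v^*_x\neq 0\}$ to obtain $v^*_{xx}=\mu v^*$ there, which the convexity argument rules out. Your route buys an explicit $O(1/\chi)$ rate for the weighted quantity, and part (1) requires neither any convergence of $u_n$ nor the case split on $u^*\equiv 0$; the paper's route is more elementary (no integration by parts against the singular weight) and produces the intermediate relation $v^*_xu^*=0$ as a by-product. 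The treatments of part (2) are essentially identical in both arguments.
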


\begin{remark}
\label{property-bif-solu-rk1}
Let $\{(u(\cdot;\chi_n),v(\cdot;\chi_n))\}_{n\ge 1}$ be a sequence of non-constant positive stationary solutions of \eqref{main-eq0} with $\chi=\chi_n$ and $\chi_n\to\infty$.

\begin{itemize}
\item[(1)] By Theorem \ref{property-bif-solu-thm}(3) and (5), there are $n_k\to\infty$,
$u^*\in L^2(0,L)$, and $v^*\in H^2(0,L)$ such that
$$
\lim_{n_k\to\infty} u(x;\chi_{n_k})=u^*(x)\quad \text{weakly in}\,\, L^2(0,L),
$$
and
$$
\lim_{n_k\to\infty} v(x;\chi_{n_k})=v^*(x)\quad \text{in}\,\, C^1([0,L]),
\quad
\lim_{n_k\to\infty}v(x;\chi_{n_k})=v^*(x)\quad \text{weakly in}\,\, H^2(0,L).
$$

\item[(2)]
If there are $u^*\in L^2(0,L)$ and $x^*\in [0,L]$ such that
$$
\lim_{n\to\infty} u(x;\chi_n)=u^*(x)\quad  a.e. \,\, x\in (0,L) ,
\quad {\rm and}\,\,\,
\liminf_{n\to\infty}u(x^*;\chi_n)>\frac{a}{b},
$$
then  by Theorem \ref{property-bif-solu-thm1} (2), there is $\bar u\in [0,\frac{a}{b}]$ such that
$$
\lim_{n\to\infty} u(x;\chi_n)=\bar u \le \frac{a}{b}\quad a.e.\,\,  x\in [0,x^*)\cup (x^*,L],
$$
which shows in certain sense that $\{(u(\cdot;\chi_n),v(\cdot;\chi_n))\}_{n\ge 1}$ develops ``spikes''  at $x^*$ as $n\to\infty$. We will give a definition of spikes and study spiky solutions in next section.

\end{itemize}
\end{remark}

In the following, we prove  Theorems \ref{global-bif-thm}-\ref{property-bif-solu-thm1}.
%Note that Theorems \ref{property-bif-solu-thm} and \ref{property-bif-solu-thm1} are independent of Theorem \ref{global-bif-thm}. We can then use Theorems \ref{property-bif-solu-thm} and \ref{property-bif-solu-thm1}  in the proof of Theorem \ref{global-bif-thm}.
{We first prove Theorem \ref{property-bif-solu-thm} since we need to employ Theorem \ref{property-bif-solu-thm} in the proof of Theorem \ref{global-bif-thm}. Next, we prove Theorem \ref{property-bif-solu-thm1}. We end this section by proving Theorem \ref{global-bif-thm}.}

\medskip

We first prove Theorem \ref{property-bif-solu-thm}.

%Next,  prove Theorem \ref{property-bif-solu-thm}.
\begin{proof} [Proof of Theorem \ref{property-bif-solu-thm}]

(1) We first  extend $u^*(x)$ to the following $\tilde u^*(x)$,
\begin{equation*}
%\label{u-extension-eq1}
\tilde u^*(x)=\begin{cases} u^*(x),\quad & 0\le x\le L\cr
u^*(2kL-x),\quad &(2k-1)L\le x\le 2kL,\,\, k=1,2,\cdots\cr
u^*(x-2kL),\quad &2kL\le x\le (2k+1)L,\,\, k=1,2,\cdots
\end{cases}
\end{equation*}
and
\begin{equation*}
%\label{u-extension-eq2}
\tilde u^*(x)=\tilde u^*(-x)\quad \forall\,\, x\le 0.
\end{equation*}
It is clear that
\begin{equation}
\label{u-extension-eq3}
\tilde u^*(x)=\tilde u^*(-x),\quad \tilde u^*(x+L)=\tilde u^*(-x+L),\quad  \tilde u^*(x+2L)=\tilde u^*(x).
\end{equation}
This implies that
\begin{equation}
\label{u-extension-eq}
\tilde u^*_x(kL)=0\quad {\rm for}\,\, k=0,\pm1,\pm 2,\cdots.
\end{equation}

Next, let
$\tilde v^*(x)$ be the solution of
\begin{equation}
\label{v-extension-eq1}
v_{xx}-\mu v+\tilde u^*(x)=0,\quad x\in\mathbb{R}.
\end{equation}
Then we also have
\begin{equation}
\label{v-extension-eq2}
\tilde v^*(x)=\tilde v^*(-x),\quad \tilde v^*(x+L)=\tilde v^*(-x+L),\quad  \tilde v^*(x+2L)=\tilde v^*(x)
\end{equation}
and
\begin{equation}
\label{v-extension-eq3}
\tilde v^*_x(kL)=0\quad {\rm for}\,\, k=0,\pm1,\pm 2,\cdots.
\end{equation}
This implies that
\begin{equation}
\label{v-extension-eq4}
{v^*(x)}=\tilde v^*(x)\quad {\rm for}\quad 0\le x\le L.
\end{equation}

Now,
by {\cite[Lemma 2.2]{SaShXu},}
$$
|\frac{\tilde v_x^*(x)}{\tilde v^*(x)}|\le \sqrt \mu\quad \forall\, x\in\mathbb{R}.
$$
This together with \eqref{v-extension-eq4} implies that \eqref{bound-v-eq2} holds.

(2) It follows from \cite[Lemma 2.1]{FuWiYo}.

(3)
First, observe that
\begin{equation}
\label{stationary-u-eq1}
\begin{cases}
u^*_{xx}-\chi(\frac{u^*}{v^*} v^*_x)_x+u^*(a-bu^*)=0,\quad 0<x<L\cr
u^*_x(0)=u^*_x(L)=0
\end{cases}
\end{equation}
and $v_x^*(0)=v_x^*(L)=0$. It then follows that
$$
0=a\int_0^L u^*(x)dx-b\int_0^L  (u^*)^2(x)dx\le a\int_0^L u^*(x)dx-\frac{b}{L}\big(\int_0^L u^*(x)dx\big)^2.
$$
This implies  that
$$
\inf_{x\in [0,L]}(a-b u^*(x))<0< \sup_{x\in[0,L]}(a-b u^*(x)) \quad {\rm (since\,\,}
u^*(x)\not\equiv \,{\rm constant)},
$$
$$
\int_0^L u^*(x)dx\le \frac{a L}{b},
$$
and
$$
\int_0^L (u^*)^2(x)dx=\frac{a}{b}\int_0^L u^*(x)dx.
$$
\eqref{bound-u-eq1} and \eqref{bound-u-eq2} then follow.

(4)
Integrating the first equation in \eqref{stationary-u-eq1} from $0$ to $x$ and noting that $u^*_x(0)=v^*_x(0)=0$, we get
\begin{equation}
\label{u-x-eq}
u^*_x(x)=\chi \frac{v^*_x(x)}{v^*(x)} u^*(x)+ f^*(x)\quad \forall\, x\in [0,L],
\end{equation}
where
$$
f^*(x)=-\int_0^ x  u^*(z)(a-b u^*(z)) dz\quad {\rm for}\quad x\in [0, L].
$$
By \eqref{bound-u-eq1}, there is $x_0\in [0,L]$ such that $u^*(x_0)=\frac{a}{b}$.
We then have
\begin{equation}
\label{u-eq}
u^*(x)=e^{\chi\int_{x_0}^x \frac{v_x^*(z)}{v^*(z)}dz} \frac{a}{b}+
\int_{x_0} ^ x  e^{\chi\int_{z}^x \frac{v_x^*(y)}{v^*(y)}dy } f^*(z) dz\quad \forall\, x\in [0,L].
\end{equation}
By   \eqref{bound-v-eq2}, \eqref{bound-u-eq2}, \eqref{u-x-eq}, and \eqref{u-eq},
there is $C(\chi,\mu,a,b)>0$ such that  \eqref{bound-u-eq3} holds.

(5) Observe that
$$
\begin{cases}
v^*_{xx}-\mu v^*(x)+\nu u^*(x)=0,\quad x\in (0,L)\cr
v^*_x(0)=v^*_x(L)=0.
\end{cases}
$$
By \eqref{bound-u-eq1}, \eqref{bound-u-eq2}, and a priori estimates for elliptic equations, there is
$C_2(a,b,\mu,\nu)>0$ such that  \eqref{bound-v-eq1} {holds}.

(6)
Without loss of generality, we assume that $u^*(x)$ is monotone increasing on $(0,L)$.
Then $u^*_x(x)\ge 0$ on $(0,L)$.
Let $w^*=v^*_x$. Then $w^*(x)$ is the solution of
\begin{equation}
\label{w-star-eq1}
\begin{cases}
w^*_{xx}-\mu w^*(x)+\nu u^*_x(x)=0,\quad 0<x<L\cr
w^*(0)=w^*(L)=0.
\end{cases}
\end{equation}
Since $u^*(x)\not\equiv$ constant, we have $v^*(x)\not \equiv$ constant and $w^*(x)\not \equiv 0$.
By  maximum principle for elliptic equations, we have
$$
w^*(x)>0 ,\quad 0<x<L.
$$
This implies that $v^*_x(x)>0$ for every $x\in (0,L)$.
\end{proof}

Next, we prove  Theorem \ref{property-bif-solu-thm1}.

\begin{proof} [Proof of Theorem \ref{property-bif-solu-thm1}]
For simplicity in notation, put
$$
(u_n(x),v_n(x))=(u(x;\chi_n),v(x;\chi_n)).
$$

(1) First,
By Theorem \ref{property-bif-solu-thm},
$$
\limsup_{n\to\infty}\int_0^L u_n^2(x)dx<\infty.
$$
Without loss of generality, we may then assume that there is $u^*\in L^2(0,L)$ such that
$$
\lim_{n\to\infty} u_n(x)=u^*(x)\quad \text{weakly in }\,\, L^2(0,L).
$$
Note that
\begin{equation}
\label{case2-v-eq1}
v_{n xx}-\mu v_{n}+\nu u_{n}=0\quad \text{on}\,\, (0,L).
\end{equation}
We then have   $v_{n xx}$ converges {to} $v^*_{xx}=\mu v^*-\nu u^*$ weakly in $L^2(0,L)$,
and $v^*$ is a weak solution of
\begin{equation}
\label{case2-v-eq2}
\begin{cases}
v^*_{xx}-\mu v^*+\nu u^*=0,\quad 0<x<L\cr
v^*_x(0)=v^*_x(L)=0.
\end{cases}
\end{equation}
If $u^*(x)=0$ for a.e. $x\in [0,L]$, we have $v^*(x)=0$ for all $x\in [0,L]$.
(1)  then follows.

In the following, we assume that $u^*(x)\not =0$ for a.e. $x\in [0,L]$. Then
$$
\int_0^L u^*(x)dx>0.
$$
This implies that
$$
\lim_{n\to\infty}\int_0^L u_n(x)dx>0.
$$
This together with Theorem \ref{property-bif-solu-thm}(2) implies that there is $\sigma>0$ such that
\begin{equation}
\label{spike-lm-eq3}
v_n(x)\ge \sigma\quad \forall\, n\gg 1,\,\, x\in [0,L].
\end{equation}
Note   that
\begin{equation}
\label{case2-eq4}
 \frac{v_{nx}(x)}{v_n(x)} u_n(x)=\frac{1}{\chi_n}\Big(u_{nx}(x)- f_n(x)\Big)\quad \forall\, x\in (0,L),
\end{equation}
where
$$
f_n(x)={\int_{x}^{L} } u_n(z)(a-b u_n(z)) dz\quad {\rm for}\quad x\in (0, L).
$$
By Theorem \ref{property-bif-solu-thm},
$\{f_n\}$ is a bounded sequence on $C([0,L])$.
By \eqref{spike-lm-eq3},
$$
v^*(x)\ge {\sigma}\quad \forall\, x\in [0,L].
$$
This together with     \eqref{case2-eq4} implies that,  for any {$x_1,x_2\in [0,L]$},
\begin{align}
\label{spike-lm-eq4}
\int_{x_1} ^{x_2}\frac{v_x^*(x)}{v^*(x)}u^*(x)dx&=\lim_{n\to\infty}
\int_{x_1} ^{x_2} \frac{v_{nx}}{v_{n}}u_{n}dx\nonumber\\
&=\lim_{n\to\infty}\big(\frac{1}{\chi_{n}}\int_{x_1} ^{x_2} u_{nx}dx-\frac{1}{\chi_{n}}\int_{x_1} ^{x_2} f_{n}(x)dx\big)\nonumber\\
&=\lim_{n\to\infty}\frac{1}{\chi_{n}}\big(u_n(x_2)-u_n(x_1)\big).
\end{align}

By Fatou's Lemma, we have
$$
0\le\int_0^L \liminf_{n\to\infty}u_n(x) dx\le \liminf_{n\to\infty}\int_0^L u_n(x)dx\le \frac{aL}{b}.
$$
Hence  for a.e. $x\in [0,L]$, $\liminf_{n\to\infty}u_n(x)<\infty$.  Therefore, there is $c\in [0,L]$
such that
$$
\liminf_{n\to\infty}u_n(c)<\infty.
$$
Then there is $n_k\to\infty$ such that
$\lim_{n_k\to\infty} u_{n_k}(c)$ exists and
$$
\lim_{n_k\to\infty} u_{n_k}(c)={\liminf_{n\to\infty}}  u_n(c)<\infty.
$$
By Fatou's Lemma again,
$$
0\le \int_0^L \liminf_{n_k\to\infty} u_{n_k}(x)dx\le \liminf_{n_k\to\infty}\int_0^L u_{n_k}(x)dx\le \frac{aL}{b}.
$$
Then for a.e. $x\in [0,L]$, $\liminf_{n_k\to\infty} u_{n_k}(x)dx<\infty$.
Let
$$
E=\{x\in [0,L]\,|\, \liminf_{n_k\to\infty} u_{n_k} (x)<\infty\}.
$$
 For any $d\in E$, there is $\{n_k^{'}\}\subset \{n_k\}$ such that
$$
\lim_{n_k^{'}\to\infty} u_{n_k^{'}}(d)=\liminf_{n_k\to\infty}u_{n_k}(d)<\infty.
$$
Then by \eqref{spike-lm-eq4},
we have
$$
\int_{c}^{d}\frac{v_x^*(x)}{v^*(x)}u^*(x)dx=\lim_{n_k^{'}\to\infty}\frac{1}{\chi_{n_k^{'}}}\big(u_{n_k^{'}}({d})-u_{n_k^{'}}({c})\big)=0.
$$
This implies that for any $d_1,d_2\in E$,
$$
\int_{d_1}^{d_2} \frac{v_x^*(x)}{v^*(x)}u^*(x)dx=-\int_c^{d_1} \frac{v_x^*(x)}{v^*(x)}u^*(x)dx+\int_{c}^{d_2} \frac{v_x^*(x)}{v^*(x)}u^*(x)dx=0.
$$
For any $x_1,x_2\in [0,L]$ and $\epsilon>0$, there are $d_1,d_2\in E$ such that
$$
|\int_{x_1}^{x_2} \frac{v_x^*(x)}{v^*(x)}u^*(x)dx-\int_{d_1}^{d_2} \frac{v_x^*(x)}{v^*(x)}u^*(x)dx|=| \int_{x_1}^{x_2} \frac{v_x^*(x)}{v^*(x)}u^*(x)dx|<\epsilon.
$$
This implies that
$$
\int_{x_1}^{x_2} \frac{v_x^*(x)}{v^*(x)}u^*(x)dx=0\quad \forall\, x_1,x_2\in [0,L]
$$
and then
$$
v_x^*(x)u^*(x)=0\quad {\rm for}\,\, a.e.\,\, x\in [0,L].
$$

{Note that $v^*_x(0)=v^*_x(L)=0$.}
Let ${c^{*},d^*}\in [0,L]$ be such that
$$
v^*({c^*})=\min_{x\in [0,L]} v^*(x),\quad v^*(d^*)=\max_{x\in[0,L]}v^*(x).
$$
Then $v_x^*({c^*})=v_x^*({d^*})=0$. By \eqref{case2-v-eq2}, we have
$$
\int_{c^{*}}^{d^{*}} v^*_{xx}v^*_xdx-\mu \int_{c^{*}}^{d^{*}} v^* v_x^*dx+\nu\int_{c^{*}}^{d^{*}} u^* v_x^*dx=0.
$$
This implies that
$$
v^*(c^*)=v^*(d^*).
$$
Hence  there is $\bar v\ge 0$ such that
\begin{equation}
\label{case2-v-eq3}
v^*(x)=\bar v\quad \forall\, x\in [0,L]
\end{equation}
and then
\begin{equation*}
u^*(x)=\frac{\mu}{\nu} \bar v\quad a.e. \,\, x\in [0,L].
\end{equation*}
Note that
$$
\int_0^L {u^*(x)}dx\le\frac{a}{b}{L}.
$$
Hence $\bar v\le \frac{\nu}{\mu}\frac{a}{b}$.
(1)  is thus proved.

\smallskip

(2) First, suppose that
$$
u_n(x)\to u^*(x)\quad \text{weakly in}\,\, L^2(0,L).
$$
By Theorem \ref{property-bif-solu-thm}(5), without loss of generality, we may assume that
there is $v^*\in H^2(0,L)$ such that
$$
\lim_{n\to\infty} v_n(x)= v^*(x)\quad \text{in}\,\, C^1([0,L])
\quad {\rm and}\,\,\,
\lim_{n\to\infty} v_n(x)=v^*(x)\quad \text{weakly in}\,\, H^2(0,L),
$$
By the arguments in (1),  there is $0\le\bar u\le \frac{a}{b}$ such that
$$
u^*(x)=\bar u\quad a.e.\, x\in [0,L].
$$

Next, suppose that
$$
u_n(x)\to u^*(x)\quad \, a.e.\, x\in [0,L].
$$
By Theorem \ref{property-bif-solu-thm}(3), {$u_n$ is a bounded sequence in $L^2(0,L)$. Hence}
$$
\lim_{n\to\infty}u_n(x)=u^*(x)\quad \text{weakly in}\,\, L^2(0,L).
$$
Then there is $0\le\bar u\le \frac{a}{b}$ such that
$$
u^*(x)=\bar u\quad a.e.\, x\in [0,L].
$$
The theorem is thus proved.
\end{proof}

Finally, we prove Theorem \ref{global-bif-thm}.

\begin{proof}[Proof of Theorem \ref{global-bif-thm}]
It is easy to see that
$$
F(\chi,\frac{a}{b})=0\quad \forall\,\ \chi>0,
$$
$$
D_uF(\chi,\frac{a}{b}) u=u_{xx}+(\chi\mu -a)u-\frac{\chi \mu^2}{\nu} v \quad \forall\, u\in X,
$$
and
$$
D_{\chi}D_u F(\chi,\frac{a}{b})u=\mu u-\frac{\mu^2}{{\nu}} v\quad\forall \, u\in X,
$$
where $v$ is the solution of \eqref{new-new-v-eq}.

When $\chi=\chi_k^{{*}}$, we have
$$
\mathcal{N} (D_u F(\chi_k^{{*}},\frac{a}{b})={\rm span}\{{\phi_k}\}
$$
and
$$
\Big(D_{\chi}D_u F(\chi,\frac{a}{b}) \phi_k\Big) (x)=\frac{{\mu}k^2 \pi^2}{\mu L^2+k^2\pi^2}\cos\frac{k\pi x}{L}\not \in \mathcal{R} (D_u(\chi_k^{{*}},\frac{a}{b})),
$$
{where $\phi_k(x)=\cos\frac{ k\pi x}{L}$.}
It is clear that for any $u\in  X$ and $\chi>0$, ${ {D_uF(\chi,\frac{a}{b})}}$ is a Fredholm operator.
By
\cite[Theorem 4.4]{ShWa},  each of the sets $\mathcal{C}_k^+$ and $\mathcal{C}_k^-$ satisfies one of (i), (ii), (iii) in the statement.

Suppose that  $\mathcal{C}_k^+$  satisfies (i), i.e.,  $\mathcal{C}_k^+$  is not compact.
We claim that $\mathcal{C}_k^+$ extends to infinity in the positive direction of $\chi$.
For otherwise, there are ${(\chi_{n}^{*},u_n^{*})}\in \mathcal{C}_k^+$ such that ${{\chi_n^{*}}}\to \chi^{**}<\infty$, and there is no $u\in X$ such that $({\chi^{**}},u)\in\mathcal{C}_k^+$.
 By Theorem \ref{property-bif-solu-thm}, without loss of generality, we  may assume that there are $(u^*,v^*)$ such that
$$
u_n^*(x)\to u^*(x),\quad v_n^*(x)\to v^*(x)
$$
as $n\to\infty$ uniformly in $x\in [0,L]$.  Then both $u^*(x)$ and $v^*(x)$ are uniformly
continuous in $x\in [0,L]$.  By Theorem \ref{property-bif-solu-thm} again,
 $\sup_{x\in [0,L]}u_n^*(x)\ge \frac{a}{b}$,
we must have $\sup_{x\in [0,L] } u^*(x)\ge \frac{a}{b}$. This implies that
$$
\int_0^L u^*(x)dx>0
$$ and then
$$
\inf _{x\in [0,L]} v^*(x)>0.
$$
We then  have that $(u^*(x),v^*(x))$ is a positive stationary solution of \eqref{main-eq0}
with $\chi=\chi^{**}$.  By the connectness of $\mathcal{C}_k^+$, we have
$(\chi^{**},u^*)\in \mathcal{C}_k^+$, which is a contradiction. Therefore, the claim holds.

Similarly,   if ${\mathcal{C}_k^-}$  is not compact, then it extends to infinity in the positive direction of $\chi$. The theorem is thus proved.
\end{proof}

\section{Spiky stationary solutions}

In this section, we study spiky stationary solutions.  We first give the following definition.

\begin{definition}
\label{spike-solution-def}
Let $\{(u^*(\cdot;\chi_n), v^*(\cdot;\chi_n))\}$ be positive non-constant stationary solutions of \eqref{main-eq0} with $\chi=\chi_n$ and $\chi_n\to\infty$.
We say that  $u^*(\cdot;\chi_n)$ develops spikes at $x^*\in [0,L]$ as $n\to\infty$ if there are  $\sigma^*>0$ and $\delta^*>0$ such that
%\begin{equation}
%\label{new-spike-eq0}
%\limsup_{n\to \infty} \Big(\max_{x\in [x^*-\delta,x^*+\delta]\cap[0,L]}u^*(x;\chi_n)-%u^*(x^*;\chi_n)\Big)\le 0\quad \forall\, 0<\delta<\delta^*
%\end{equation}
%and
\begin{equation}
\label{new-spike-eq1}
\liminf_{n\to\infty}\Big(\max_{x\in [x^*-\delta,x^*+\delta]\cap[0,L]}u^*(x;\chi_n)-
\min_{x\in [x^*-\delta,x^*+\delta]\cap[0,L]}u^*(x;\chi_n)\Big)\ge \sigma^* \quad \forall\,\, 0<\delta<\delta^*.
\end{equation}
Such $x^*$ is called a spiky point of $\{(u^*(\cdot;\chi_n), v^*(\cdot;\chi_n))\}$. If $x^*\in\{0,L\}$ (resp. $x^*\in (0,L)$), we say that $u^*(\cdot;\chi_n)$ develops boundary spikes (resp. interior spikes).
\end{definition}

\begin{remark}
\label{spike-rk1}
Let $\{(u^*(\cdot;\chi_n), v^*(\cdot;\chi_n))\}$ be positive non-constant stationary solutions of \eqref{main-eq0} with $\chi=\chi_n$ and $\chi_n\to\infty$.
\begin{itemize}

\item[(1)]
 The condition  \eqref{new-spike-eq1}
% implies  that
%for any
%$0<\delta\ll 1$,
%\begin{equation}
%\label{new-spike-eq1-1}
%\liminf_{n\to\infty}\Big(\max_{x\in [x^*-\delta,x^*+\delta]\cap[0,L]}u^*(x;\chi_n)-
%\min_{x\in [x^*-\delta,x^*+\delta]\cap[0,L]}u^*(x;\chi_n)\Big)\ge \sigma^* \quad %
%\forall\, 0<\delta<\delta^*.
%\end{equation}
 indicates that for any $\delta>0$,  $\{(u^*(\cdot;\chi_n), v^*(\cdot;\chi_n))\}$  is not near a constant function on $[x^*-\delta,x^*+\delta]\cap[0,L]$ for $n\gg 1$.

\item[(2)]   If \eqref{new-spike-eq1} holds,  then  for any $0<\delta\ll 1$,
\begin{equation}
\label{new-spike-eq2}
\liminf_{n\to\infty}\max_{x\in [x^*-\delta,x^*+\delta]\cap [0,L]}|u^*_x(x;\chi_n)|=\infty,
\end{equation}
which   indicates that  $u^*(x;\chi_n)$ changes quickly near $x^*$.
In fact,  assume that \eqref{new-spike-eq2} does not hold.  Then there is $0<\delta_0<\delta^*$ such that
$$
\liminf_{n\to\infty}\max_{x\in [x^*-\delta_0,x^*+\delta_0]\cap [0,L]}|u^*_x(x;\chi_n)|<\infty.
$$
Then there are $M>0$ and $n_k\to\infty$ such that
$$
|u^*_x(x;\chi_{n_k})|\le M\quad\forall\,\, x\in [x^*-\delta_0,x^*+\delta_0]\cap[0,L],\,\, k\ge 1.
$$
By Theorem {\ref{property-bif-solu-thm} (3)},
$$
\int_0^L u^*(x;\chi_n)<\frac{aL}{b}\quad\forall \, n\ge 1.
$$
 This implies that there are $\tilde M>0$ and $x_n\in [x^*-\delta_0,x^*+\delta_0]\cap[0,L]$ such that
$$
u^*(x_n;\chi_n)=\min _{x\in [x^*-\delta_0,x^*+\delta_0]\cap[0,L]}u^*(x;\chi_n)\le \tilde M\quad \forall\, n\ge 1.
$$
Note also that
$$
u^*(x;\chi_{n})=u^*(x_{n};\chi_{n})+\int_{x_{n}}^ x u^*_x(x;\chi_{n})dx
$$
for all $ x\in [0,L]$ and $  n\ge 1$.
This implies that
 \begin{equation}
\label{new-spike-eq3}
\liminf_{n_k\to\infty}\Big(\max_{x\in [x^*-\delta,x^*+\delta]\cap[0,L]}u^*({x};\chi_{n_k})-
\min_{x\in [x^*-\delta,x^*+\delta]\cap[0,L]}u^*(x;\chi_{n_k})\Big)< \sigma^* \quad \forall\, 0<\delta\ll \delta_0,
\end{equation}
which contradicts to \eqref{new-spike-eq1}. Hence \eqref{new-spike-eq2} holds.
\end{itemize}
\end{remark}

\begin{theorem}[Spiky stationary solutions]
\label{spiky-solu-thm}
Let $\{(u(\cdot;\chi_n),v(\cdot;\chi_n))\}_{n\ge 1}$ be a sequence of non-constant positive stationary solutions of \eqref{main-eq0} with $\chi=\chi_n$ and $\chi_n\to\infty$.
\begin{itemize}
\item[(1)]
If   $x^*\in [0,L]$ satisfies  that  there is $m^*>\frac{a}{b}$ such that
\begin{equation}
\label{spike-assumption-eq0}
 \liminf_{n\to\infty} \max_{x\in [x^*-\delta,x^*+\delta]\cap[0,L]}u(x;\chi_n)\ge m^*\quad \forall\,\, 0<\delta\ll 1,
\end{equation}
%and
%\begin{equation}
%\label{spike-assumption-eq1}
 %\limsup_{n\to\infty}
%\Big(\max_{x\in  [0,L]}u(x;\chi_n)-u(x^*;\chi_n)\Big)\le 0\quad \forall\, 0<\delta\ll 1,
%\end{equation}
then  $x^*$ is a spiky point of  the sequence $\{(u(\cdot;\chi_n),v(\cdot;\chi_n))\}_{n\ge 1}$.
In particular, if $x^*\in [0,L]$ satisfies that
\begin{equation}
\label{spike-assumption-eq0-1}
\liminf_{n\to\infty}u(x^*;\chi_n)>\frac{a}{b},
\end{equation}
then $x^*$  is a spiky point of  the sequence $\{(u(\cdot;\chi_n),v(\cdot;\chi_n))\}_{n\ge 1}$.

\item[(2)]  If
\begin{equation}
\label{spike-assumption-eq0-2}
\liminf_{n\to\infty}\max_{x\in [0,L]}u(x;\chi_n)>\frac{a}{b},
\end{equation}
then there are $\{n_k\}$ and  {$x^*\in [0,L]$} such that $x^*$ is a spiky point of  the sub-sequence $\{(u(\cdot;\chi_{n_k}),v(\cdot;\chi_{n_k}))\}_{k\ge 1}$.

\item[(3)]  If $\{u(\cdot;\chi_n)\}$ satisfies that
\begin{equation}
\label{spike-assumption-eq0-3}
\begin{cases}
 u_{x}(x;\chi_n)\le 0\,\, \, ({\rm resp.}\,\,  u_{x}(x;\chi_n)\ge 0) \quad \forall\,\, x\in (0,L),
\cr \cr
   \liminf_{n\to\infty} u(0;\chi_n)>\frac{a}{b}\,\,\, ({\rm resp.}\,\, \liminf_{n\to\infty} u(L;\chi_n)>\frac{a}{b}),
\end{cases}
\end{equation}
  then
$x^*=0$ (resp. $x^*=L$) is a boundary spiky point of $\{(u(\cdot;\chi_n),v(\cdot;\chi_n))\}_{n\ge 1}$. Moreover,  there is ${\bar u}\in [0,\frac{a}{b}]$ such that
\begin{equation}
\label{spiky-thm-eq1}
\lim_{{n}\to\infty} u(x;\chi_n)=\bar u \quad \text{locally uniformly in}\,\, x\in (0,L).
\end{equation}

In addition, if $\limsup_{n\to\infty} u(0;\chi_n)<\infty$ (resp. $\limsup_{n\to\infty} u(L;\chi_n)<\infty$) and   there are $0< c<d< L$ such that
$$
\limsup_{n\to\infty}\max_{x\in [c,d]}u(x;\chi_n)<\frac{a}{b},
$$
then $\bar u=0$.
\end{itemize}
\end{theorem}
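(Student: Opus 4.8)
The plan is to reduce all three parts to one oscillation estimate, powered by the rigidity in Theorem~\ref{property-bif-solu-thm1}(2): every weak-$L^2$ or a.e.\ limit of a sequence of $u$-components is a \emph{constant} lying in $[0,\frac ab]$. Throughout write $I_\delta=[x^*-\delta,x^*+\delta]\cap[0,L]$ and $a_n=\max_{I_\delta}u(\cdot;\chi_n)-\min_{I_\delta}u(\cdot;\chi_n)$. For Part~(1) I would prove $\liminf_n a_n\ge m^*-\frac ab=:\sigma^*>0$ for every sufficiently small $\delta$, which gives the spiky-point property with this $\sigma^*$. Since $\liminf_n a_n$ is the least subsequential limit of $(a_n)$, it suffices to show every subsequential limit is $\ge\sigma^*$. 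Given a subsequence with $a_{n_j}\to\ell$, I first use the $L^2$-bound of Theorem~\ref{property-bif-solu-thm}(3) together with Theorem~\ref{property-bif-solu-thm1}(2) (cf.\ Remark~\ref{property-bif-solu-rk1}) to extract a further subsequence along which $u(\cdot;\chi_{n_j})\rightharpoonup\bar u$ weakly in $L^2$ with $\bar u$ a constant $\le\frac ab$. Testing against $\mathbf 1_{I_\delta}$ yields $\frac1{|I_\delta|}\int_{I_\delta}u(\cdot;\chi_{n_j})\to\bar u$, hence $\limsup_j\min_{I_\delta}u(\cdot;\chi_{n_j})\le\bar u\le\frac ab$, while the hypothesis \eqref{spike-assumption-eq0} gives $\liminf_j\max_{I_\delta}u(\cdot;\chi_{n_j})\ge m^*$. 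Subtracting, $\ell\ge m^*-\frac ab=\sigma^*$. The ``in particular'' case \eqref{spike-assumption-eq0-1} follows by choosing $m^*\in(\frac ab,\liminf_n u(x^*;\chi_n))$ and noting that $x^*\in I_\delta$ forces $\max_{I_\delta}u(\cdot;\chi_n)\ge u(x^*;\chi_n)$.

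Part~(2) is a direct reduction to Part~(1). Let $M=\liminf_n\max_{[0,L]}u(\cdot;\chi_n)>\frac ab$, choose maximizers $x_n$ with $u(x_n;\chi_n)=\max_{[0,L]}u(\cdot;\chi_n)$, and extract $n_k$ with $\max_{[0,L]}u(\cdot;\chi_{n_k})\to M$ and $x_{n_k}\to x^*\in[0,L]$ (compactness of $[0,L]$). Then for each fixed small $\delta$ one has $x_{n_k}\in I_\delta$ for all large $k$, so $\liminf_k\max_{I_\delta}u(\cdot;\chi_{n_k})\ge M>\frac ab$; applying Part~(1) to the subsequence $\{(u(\cdot;\chi_{n_k}),v(\cdot;\chi_{n_k}))\}$ (itself a sequence of non-constant stationary solutions with $\chi_{n_k}\to\infty$) shows $x^*$ is one of its spiky points.

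For Part~(3), take the decreasing case with $\liminf_n u(0;\chi_n)>\frac ab$. Monotonicity gives $\max_{[0,\delta]}u(\cdot;\chi_n)=u(0;\chi_n)$, so Part~(1) applied with $x^*=0$, $I_\delta=[0,\delta]$ and $m^*\in(\frac ab,\liminf_n u(0;\chi_n))$ shows $0$ is a boundary spiky point. For the locally uniform convergence I would note that each $u(\cdot;\chi_n)$ is decreasing with $x\,u(x;\chi_n)\le\int_0^x u(\cdot;\chi_n)<\frac{aL}b$, hence pointwise bounded and of uniformly bounded variation on every $[c,L]$; Helly's selection theorem plus a diagonal argument produce a subsequence converging pointwise on $(0,L)$ to a monotone limit, which by Theorem~\ref{property-bif-solu-thm1}(2) is a constant $\bar u\in[0,\frac ab]$, and for monotone functions convergence a.e.\ to a constant upgrades to convergence at every point of $(0,L)$ and to local uniform convergence. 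For the additional assertion, $\limsup_n u(0;\chi_n)<\infty$ together with monotonicity gives a uniform bound $u(\cdot;\chi_n)\le K$; then dominated convergence and the identity $\int_0^L u^2=\frac ab\int_0^L u$ from Theorem~\ref{property-bif-solu-thm}(3) force $\bar u^2=\frac ab\bar u$, so $\bar u\in\{0,\frac ab\}$, and $\limsup_n\max_{[c,d]}u(\cdot;\chi_n)<\frac ab$ excludes $\bar u=\frac ab$, leaving $\bar u=0$; as $0$ is then the only possible subsequential limit, the full sequence converges.

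The hard part is the full-sequence (rather than subsequential) local uniform convergence to a \emph{single} $\bar u$ in the unbounded-spike regime of Part~(3), where $u(0;\chi_n)\to\infty$ is permitted. Writing $g=\limsup_n u(\cdot;\chi_n)$ and $h=\liminf_n u(\cdot;\chi_n)$ on $(0,L)$, monotonicity forces both to be constants, but the escaping spike mass $\lim\int_0^{\varepsilon}u(\cdot;\chi_n)$ may a priori differ along different subsequences, so the $L^2$ identity no longer pins the background constant and one cannot immediately conclude $g=h$. I expect this uniqueness to be the main obstacle: it appears to require extra rigidity of the monotone stationary profiles (controlling how much mass concentrates at the boundary as $\chi_n\to\infty$) beyond the uniform bounds furnished by Theorem~\ref{property-bif-solu-thm}.
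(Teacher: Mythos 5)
Your proposal follows essentially the same route as the paper's proof. For part (1) the paper argues by contradiction (assuming the oscillation on $[x^*-\delta_m,x^*+\delta_m]\cap[0,L]$ has liminf less than $\frac1m$ for every $m$, extracting a weakly convergent subsequence, and using Theorem \ref{property-bif-solu-thm1}(2) to force the weak limit to be a constant exceeding $\frac ab$, a contradiction with \eqref{bound-u-eq2}); you run the same two ingredients directly, letting the constant weak limit $\bar u\le\frac ab$ control $\limsup_j\min_{I_\delta}$ via the average over $I_\delta$, letting the hypothesis control $\liminf_j\max_{I_\delta}$, and subtracting. That is a mild streamlining of the identical idea, not a different method. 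Your part (2) (maximizers, compactness of $[0,L]$, reduction to part (1)) and the bulk of your part (3) (the bound $x\,u(x;\chi_n)\le\int_0^xu(\cdot;\chi_n)<\frac{aL}b$, Helly's selection theorem, constancy of the limit from Theorem \ref{property-bif-solu-thm1}(2), the monotone sandwich between two points of convergence to upgrade to local uniform convergence, and dominated convergence together with $\int_0^L u^2=\frac ab\int_0^L u$ for the final assertion) coincide with the paper's arguments step for step.

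Concerning the ``hard part'' you flag at the end: you are right that your argument only yields \emph{subsequential} local uniform convergence to a constant when $u(0;\chi_n)$ is allowed to be unbounded, but you should be aware that the paper's own proof has exactly the same gap. It invokes Helly's theorem with the phrase ``without loss of generality, we may assume'' to pass to a pointwise convergent subsequence, and never shows that the limiting constant $\bar u$ is independent of the chosen subsequence, which is what the full-sequence statement \eqref{spiky-thm-eq1} requires. So you have not missed an idea that the paper supplies; you have correctly located a point at which the published argument is itself incomplete (and in the bounded sub-case your remark that $0$ is the only possible subsequential limit does legitimately close the issue for the full sequence).
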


\begin{remark}
\label{spike-rk2}
\begin{itemize}

\item[(1)] For the case that $a=b=\mu=\nu=L=1$, it is seen numerically that solutions in $\mathcal{C}_1^\pm $ are stable and for any $(\chi_n,{u(\cdot;\chi_n)})\in \mathcal{C}_1^+$ (resp. $\mathcal{C}_1^-$)  with
$\chi_n\to\infty$, $\liminf_{n\to\infty}{u(0;\chi_n)}>1$ (resp. $\liminf_{n\to\infty} { u(L;\chi_n)}>1$),
 $\{{u(\cdot;\chi_n)}\}$ develops spikes at the
boundary point $x^*=0$ (resp. $x^*=1$),  and $\lim_{n\to\infty} { u(x;\chi_{n})}=0$ locally uniformly in $(0,1]$  (resp. $[0,1)$) (see {Numerical Experiment 2}).

\item[(2)] For the case $a=b=\mu=\nu=1$ and $L=6$, {it is known that solutions {with $u$-components belonging to}
${\Gamma}_2^\pm(\chi)$ with $\chi\approx \chi_2^*$ are unstable.  Numerically, it is observed  that solutions {with $u$-components belonging to} $\mathcal{C}_2^\pm(\chi)$ with $\chi\gg \chi_{2}^{*}$ are locally stable and either a double boundary spike or a single interior spike appears as $\chi\to\infty$  (see {Numerical Experiment 4}).}

\item[(3)] For the case $a=2$, $b=\nu=1$, $\mu=3$ and $L=6$, {it is known that solutions {with $u$-components belonging to}
${\Gamma}_3^\pm(\chi)$ with $\chi\approx \chi_3^*$ are unstable.  Numerically, it is seen that solutions {with $u$-components belonging to} $\mathcal{C}_3^\pm(\chi)$ with $\chi\gg \chi_{3}^{*}$ are locally stable
and develops spikes at some boundary point and some interior point simultaneously  (see numerical simulations in subsection 5.4)}.

\end{itemize}
\end{remark}

Now, we prove Theorem \ref{spiky-solu-thm}.

\begin{proof}[Proof of Theorem \ref{spiky-solu-thm}]
 (1) Assume that  $x^*$ satisfies \eqref{spike-assumption-eq0}.  Put
$$(u_n(x),v_n(x))=(u(x;\chi_n),v(x;\chi_n)).
$$
Note that, to prove that $x^*$ is a spiky point,  it is to prove that there are $\delta^*>0$ and  $\sigma^*>0$ such that
\begin{equation}
\label{new-spike-eq1-0}
\liminf_{n\to\infty}\Big(\max_{x\in[x^*-\delta,x^*+\delta]\cap[0,L]}u_n(x)-
\min_{x\in [x^*-\delta,x^*+\delta]\cap[0,L]}u_n(x)\Big)\ge \sigma^* \quad \forall\,\, 0<\delta<\delta^*.
\end{equation}
We prove this by contradiction. Assume that \eqref{new-spike-eq1-0} does not hold. Then for any $m\ge 1$, there is $0<\delta_m<\frac{1}{m}$ such that
\begin{equation}
\label{new-spike-eq1-00}
\liminf_{n\to\infty}\Big(\max_{x\in[x^*-\delta_m,x^*+\delta_m]\cap[0,L]}u_n(x)-
\min_{x\in [x^*-\delta_m,x^*+\delta_m]\cap[0,L]}u_n(x)\Big)< \frac{1}{m}.
\end{equation}
Observe that, by Theorem \ref{property-bif-solu-thm}(3),  we always have
\begin{equation}
\label{aux-case1-eq}
\limsup_{n\to\infty}\min_{x\in[x^*-\delta,x^*+\delta]\cap[0,L]}u_n(x) <\infty\quad \forall\,\, 0<\delta\ll 1.
\end{equation}

Assume that \eqref{new-spike-eq1-00} holds for any $m\ge 1$.  We claim that for any $m\ge 1$,
$$
\liminf_{n\to\infty}\max_{x\in[x^*-\delta_m, x^*+\delta_m]\cap[0,L]}u_n(x)<\infty.
$$
For otherwise, there is $m_0\geq 1$ such that
$$
\liminf_{n\to\infty}\max_{x\in[x^*-\delta_{m_0}, x^*+\delta_{m_0}]\cap[0,L]}u_n(x)=\infty.
$$
Then, by \eqref{aux-case1-eq}, \eqref{new-spike-eq1-00} does not hold with $m=m_0$, which is a contradiction. Hence  the claim holds.

Fix $m\gg 1$ with $\frac{1}{m}<m^*-\frac{a}{b}$.  Let  $\tilde m^*:=\liminf_{n\to\infty}\max_{x\in[x^*-\delta_m,x^*+\delta_m]\cap[0,L]} u_n(x)$.
By \eqref{spike-assumption-eq0}, $\tilde m^*\ge m^*(>\frac{a}{b})$.
By  \eqref{new-spike-eq1-00},  there is  $n_k\to\infty$   such that
\begin{equation}
\label{new-aux-eq}
\lim_{n_k\to\infty}\Big(\max_{x\in [x^*-\delta_m,x^*+\delta_m]\cap[0,L]}u_{n_k}(x)-\min_{x\in [x^*-\delta_m,x^*+\delta_m]\cap [0,L]}u_{n_k}(x)\Big) =\sigma_m^*\le  \frac{1}{m}.
\end{equation}
By \eqref{aux-case1-eq},
without loss of generality, we may assume that there is $M^*\in [\tilde m^*,\infty)\subset (\frac{a}{b},\infty)$ such that
$$\lim_{n_k\to \infty} \max_{x\in [x^*-\delta_m,x^*+\delta_m]\cap[0,L]}u_{n_k}(x)=M^*
$$
This together with \eqref{new-aux-eq} implies that
\begin{equation}
\label{u-star-eq1}
\lim_{n_k\to\infty}\min_{x\in [x^*-{\delta_m},x^*+{\delta_m}]\cap[0,L]}{u_{n_k}(x)}>\frac{a}{b}.
\end{equation}

By Theorem \ref{property-bif-solu-thm}(3),  without loss of generality, we may assume that there is $u^*\in L^2(0,L)$ such that
$$
\lim_{n_k\to\infty}u_{n_k}(x)=u^*(x)\quad \text{weakly in}\,\, L^2(0,L).
$$
and
$$
\int_0^L u^*(x)dx=\lim_{n_k\to\infty}\int_0^L u_{n_k}(x)dx\le \frac{a}{b}L.
$$
By Theorem \ref{property-bif-solu-thm1}(2),  there is $\bar u\le\frac{a}{b}$ such that
$$
u^*(x)=\bar u\quad a.e.\, x\in [0,L].
$$
By \eqref{u-star-eq1}, we must have
$\bar u>\frac{a}{b}$ and then $\int_0^L u^*(x)dx>\frac{a}{b}L$, which is a contradiction. Therefore, \eqref{new-spike-eq1-00} cannot hold for all $m\ge 1$, and then  $x^*$ is a spiky point.

\smallskip

(2) Let $x_n\in [0,L]$ be such that
$$
u_n(x_n)=\max_{x\in [0,L]}u_n(x).
$$
Then there are $n_k\to\infty$ and $x^*\in [0,L]$
such that
$$
\lim_{n_k\to\infty} x_{n_k}=x^*.
$$
This implies that for any $\delta>0$,
$$
\max_{x\in [x^*-\delta,x^*+\delta]\cap[0,L]}u_{n_k}(x)= u_{n_k}(x_{n_k})\quad \forall\, k\gg 1.
$$
It then follows that for any $\delta>0$,
\begin{align*}
\liminf_{n_k\to\infty} \max_{x\in[x^*-\delta,x^*+\delta]\cap[0,L]}u_{n_k}(x)&=\liminf_{n_k\to\infty} u_{n_k}(x_{n_k})\\
&=\liminf_{n_k\to\infty}\max_{x\in [0,L]}u_{n_k}(x)\\
&\ge \liminf_{n\to\infty}\max_{x\in [0,L]}u_n(x)\\
&>\frac{a}{b}.
\end{align*}
Then by (1), $x^*$ is a spiky point of the subsequence $\{u_{n_k}(x)\}$.

\smallskip

(3) Assume that $u_{nx}(x)\le 0$ for $x\in [0,L]$. The case that $u_{nx}(x)\ge 0$ for $x\in[0,L]$ can be proved similarly.  By  (1),  $x^*$ is a spiky point of
$\{u(x;\chi_n)\}_{n\ge 1}$.  We prove that \eqref{spiky-thm-eq1} holds.

To this end, first, note that
  for any $x_0\in (0,L]$, we must have
\begin{equation}
\label{spike-eq1-000}
\limsup_{n\to\infty} u_n(x_0)<\infty.
\end{equation}
For otherwise, there is $x_0\in (0,L]$ such that $\limsup_{n\to\infty} u_n(x_0)=\infty$.
Without loss of generality, we may assume that
$\lim_{n\to\infty}u_n(x_0)=\infty$. This implies that
$$
\liminf_{n\to\infty}\int_0^L u_n(x)dx\ge \liminf_{n\to\infty}\int_0 ^{x_0} u_n(x)dx
\ge \liminf_{n\to\infty} u_n(x_0) \cdot x_0=\infty,
$$
which is a contradiction.
Hence \eqref{spike-eq1-000} holds for any $x_0\in (0,L]$. Therefore,
$\{u_n\}$ is a bounded sequence of monotone decreasing functions on $[x_0,L]$
for any $x_0\in (0,L]$.

Next, by Helly's theorem, without loss of generality, we may assume that
there is a nonincreasing function {$\hat u^{*}$}  on $(0,L]$ such that
$$
u_n\to \hat u^{*} \quad \text{pointwise in} \,\, (0,L].
$$
By Theorem \ref{property-bif-solu-thm1},
 there is $\bar u\le \frac{a}{b}$ such that
\begin{equation}
\label{u-star-eq0}
\hat u^*(x)=\bar u\quad {\rm a.e.}\,\, x\in (0,L].
\end{equation}
{Moreover, for any ${[x_1,x_2]}\subset (0,L)$, there are $\tilde x_1\in (0,x_1)$ and $\tilde x_2\in (x_2,L)$ such that
$$
\lim_{n\to\infty} u_n(\tilde x_1)=\bar u=\lim_{n\to\infty} {u_n}(\tilde x_2).
$$
Since
$$
u_n(\tilde x_1)\ge u_n(x)\ge u_n(\tilde x_2)\quad \forall\, x\in [x_1,x_2],
$$
we have that
$$
\lim_{n\to\infty} u_n(x)=\bar u\quad {\rm uniformly\,\, in}\,\, [x_1,x_2],
$$
and hence  \eqref{spiky-thm-eq1} holds.}

{In addition, if $\limsup_{n\to\infty} u(0;\chi_n)<\infty$, then $\{u(x;\chi_n)\}$ is a bounded sequence in $C([0,L])$.  By Dominated  Convergence Theorem,}
we have
$$
0=\lim_{n\to\infty}\int_0^L (au_n(x)-bu_n^2(x))dx=
\int_0^L (a\bar u-b \bar u^2)dx.
$$
This implies that $\bar u=0$ or $\bar u=\frac{a}{b}$.  Moreover,  if there are $0< c<d< L$ such that
$$
\limsup_{n\to\infty}\max_{x\in [c,d]}u_n(x)<\frac{a}{b},
$$
it is clear that $\bar u=0$.
Therefore, (3) holds and
the theorem is thus proved.

\end{proof}

\section{Numerical analysis}

In this section, we carry out some numerical analysis about the stability and bifurcation of
stationary solutions of \eqref{main-eq0}. First, in subsection \ref{scheme-sec}, we describe the numerical scheme to be used
in the simulations. We then discuss the simulations we carried out by the scheme  for three parameter settings: $a=b=\mu=\nu=L=1$; $a=b=\mu=\nu=1$ and $L=6$;  and $a=2$,
$b=\nu=1$, $\mu={3}$, $L=6$ in subsections \ref{l-1-sec}, \ref{l-6-sec}, and
\ref{a-2-l-6-sec}, respectively.

\subsection{Numerical scheme}
\label{scheme-sec}

%\textbf{(W.S. do simulations for the following two cases: $a=b=\mu=\nu=L=1$,
%and $a=b\mu\nu=1$, $L=6$.)}

In this subsection, we describe the scheme we use to perform numerical simulations of the solutions of \eqref{main-eq0}.

 For a given initial function $u_0$, to solve \eqref{main-eq0} numerically, we first simulate the solution of the second equation of \eqref{main-eq0} subject to Neumann Boundary condition to obtain the numerical solution of $v(t,x; u_0)$ using Matlab bvp4c command. Then we compute the solution of the first equation of \eqref{main-eq0} subject to the Neumann Boundary condition to get the numerical solution of $u(t,x;u_0)$ by the finite difference method in space and Runge-Kutta method in time. Observe that if $(u(x), v(x)):=\lim_{t\to\infty} (u(t, x;u_0), v(t,x;u_0))$ exists, then $(u(x), v(x))$ is a stationary solution of \eqref{main-eq0}. Hence, we run the simulation until the numerical solution changes very little. Then the numerical solution at the final time can be taken as a stationary solution. In all numerical simulations, the space step size and time step size are respectively taken as $0.01$ and $5\times 10^{-5}$. In the following subsections, we {fix $b=\nu=1$} and choose different values for {$a$, $\mu$,} $\chi$, $L$ and different initial functions to simulate the stability of the positive constant solution and bifurcation solutions from the positive constant solution.

%\textbf{(W.S. describe the numerical scheme used to do simulation)}

\subsection{Numerical analysis for the case $a=b=\mu=\nu=L=1$}
\label{l-1-sec}

In this subsection, we carry out some numerical analysis about the stability and bifurcation of the positive constant solution   for the case
$a=b=\mu=\nu=L=1$. In this case, it is known that $\chi^*=\chi_1^*\approx 11.9709$;
$(\frac{a}{b},\frac{\nu}{\mu}\frac{a}{b})=(1,1)$ is locally  asymptotically stable when
$0<\chi<\chi^*$; and when $\chi$ passes through $\chi^*$, super-critical pitchfork bifurcation occurs. Throughout this subsection, $a=b=\mu=\nu=L=1$.

\smallskip

\noindent{\bf Numerical Experiment 1.} In this numerical experiment,  we explore  the global stability of the constant solution $(1,1)$. First, we let $\chi=5$ and initial function $u_0=1+0.5\cos(\pi x)$. We observe that as time goes by, the numerical solution of $(u(t,x;u_0), v(t,x;u_0))$ converges to $(1,1)$ (see Figure  \ref{chi-5} (a) for the limit of $u(t,x;u_0)$ and (b) for the evolution of $u(t,x;u_0)$).  Note that $v$ solves
\begin{equation}
\label{v-eqq}
\begin{cases}
0=v_{xx}-\mu v+\nu u,\quad 0<x<L\cr
v_x(t,0)=v_x(t,L)=0.
\end{cases}
\end{equation}
Hence, if  $u(t,x;u_0)$ converges to some function $u_1(x)$ as $t\to\infty$, then $v(t,x;u_0)$ {converges} to $v_1(x)$ as $t\to\infty$, where $v_1(x)$ is the unique solution of \eqref{v-eqq} with $u(\cdot)$ being replaced by $u_1(\cdot)$. In Figure \ref{chi-5} as well as all  other figures, except Figure \ref{chi-20-1},
we then only present the limit profile and evolution of $u(t,x;u_0)$ as $t$ changes.
    The same phenomenon can be seen when  $u_0$ is replaced by $\tilde u_0=1-0.5\cos(\pi x)$.
In fact, we have $\tilde u_0(x)=u_0(1-x)$ and then $(u(t,x;\tilde u_0), v(t,x;\tilde u_0))=  (u(t,1-x;u_0),v(t,1-x;u_0))$ for any $t>0$ and $x\in [0,1]$.
Hence we do not present the pictures for this initial function.

%\begin{figure}[!ht]
%\begin{center}
%\subfigure[]{
%\resizebox*{0.22\linewidth}{!}{\includegraphics{para-elli-chi-5-u0-1+0.5cos-u}}\label{para-elli-chi-5-u0-1+0.5cos-u}}
%\subfigure[]{
%\resizebox*{0.22\linewidth}{!}{\includegraphics{para-elli-chi-5-u0-1+0.5cos-v}}\label{para-elli-chi-5-u0-1+0.5cos-v}}
%\subfigure[]{
%\resizebox*{0.22\linewidth}{!}{\includegraphics{para-elli-chi-5-u0-1-0.5cos-u}}\label{para-elli-chi-5-u0-1-0.5cos-u}}
%\subfigure[]{
%\resizebox*{0.22\linewidth}{!}{\includegraphics{para-elli-chi-5-u0-1-0.5cos-v}}\label{para-elli-chi-5-u0-1-0.5cos-v}}
%\caption{Profiles of numerical simulations of $u(t,x; u_0)$ and $v(t,x;u_0)$ with $\chi=5$, $L=1$ and initial function $u_0=1+0.5cos(\pi x)$ ((a) and (b)) or initial function $u_0=1-0.5cos(\pi x)$ ((c) and (d)) at different time.}
%\label{chi-5}
%\end{center}
%\end{figure}

\begin{figure}[!ht]
\begin{center}
\subfigure[]{
\resizebox*{0.36\linewidth}{!}{\includegraphics{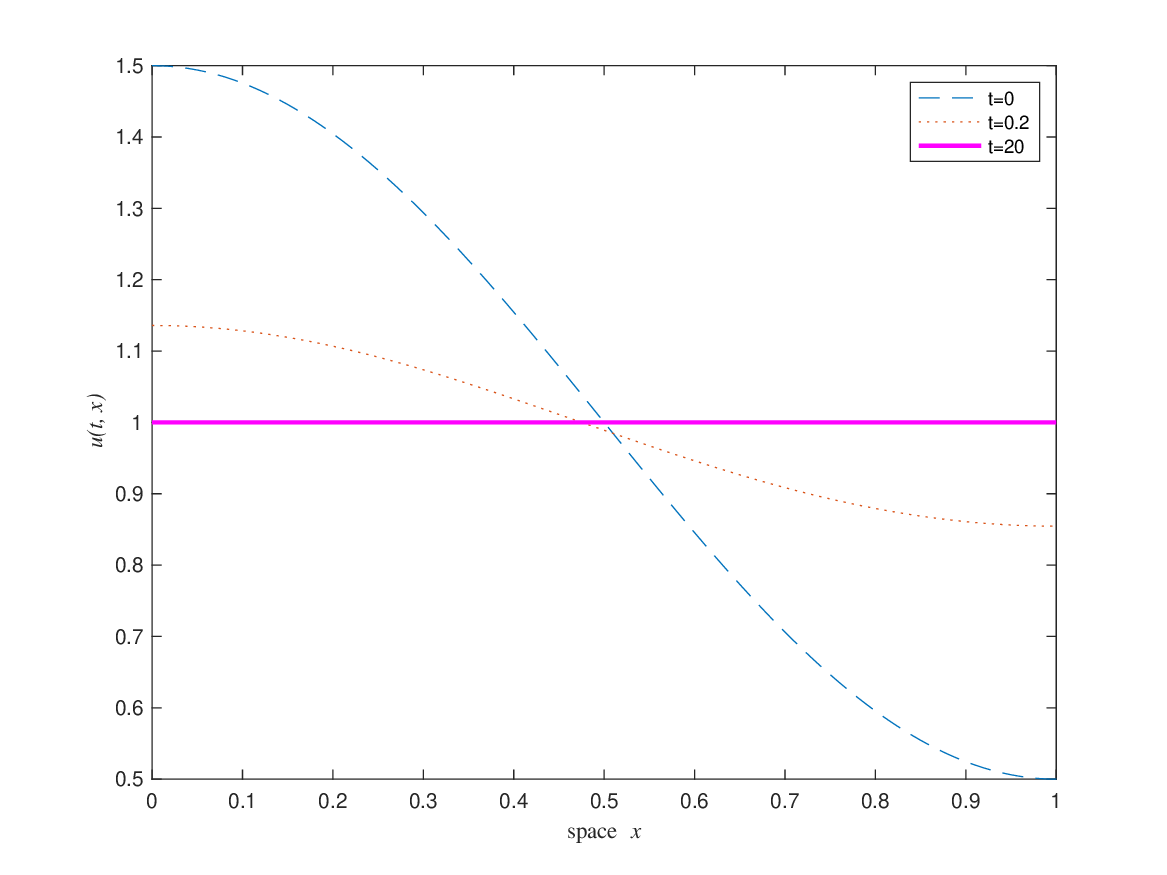}}
}%\label{para-elli-chi-11.98-u0-1+0.5cospix-u}
\subfigure[]{\resizebox*{0.36\linewidth}{!}{\includegraphics{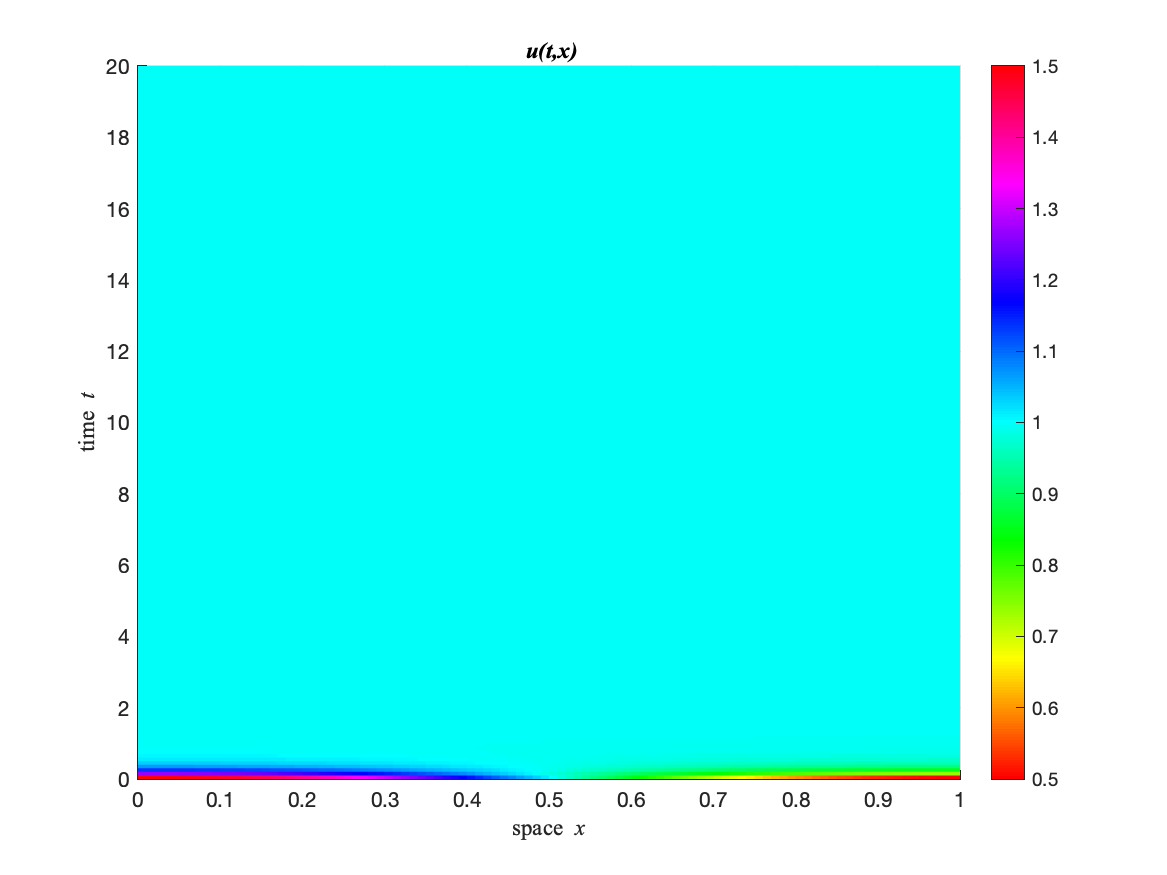}}\label{para-elli-chi-5-u0-1+0.5cos-u}}
%%%\subfigure[]{
%%%\resizebox*{0.24\linewidth}{!}{\includegraphics{para-elli-chi-5-u0-1+0.5cos-v-new}}%\label{para-elli-chi-11.98-u0-1+0.5cospix-v}
%%%\resizebox*{0.24\linewidth}{!}{\includegraphics{para-elli-chi-5-u0-1+0.5cos-v-2D}}\label{para-elli-chi-5-u0-1+0.5cos-v}}
%\subfigure[]{
%\resizebox*{0.24\linewidth}{!}{\includegraphics{para-elli-chi-5-u0-1-0.5cos-u-new}}
%\resizebox*{0.24\linewidth}{!}{\includegraphics{para-elli-chi-5-u0-1-0.5cos-u-2D}}
%\label{para-elli-chi-5-u0-1-0.5cos-u}}
%\subfigure[]{
%\resizebox*{0.24\linewidth}{!}{\includegraphics{para-elli-chi-5-u0-1-0.5cos-v-new}}
%\resizebox*{0.24\linewidth}{!}{\includegraphics{para-elli-chi-5-u0-1-0.5cos-v-2D}
%\label{para-elli-chi-5-u0-1-0.5cos-v}}
\caption{(a) limit profile, (b)  evolution of  $u(t,x;u_0)$  with $\chi=5$, $a=b=\mu=\nu=L=1$ and initial function $u_0=1+0.5\cos(\pi x)$ %((a) and (b)) %or initial function $u_0=1-0.5cos(\pi x)% %((c) and (d)).
}
\label{chi-5}
\end{center}
\end{figure}

Next, we  take % $\chi=10$ and
$\chi=11.96$, which {is} very close to $\chi^{*}$. The same phenomenon is observed as well
(see %Figure \ref{chi-10} and
Figure \ref{chi-11.96}). To see numerically whether the positive constant solution $(1,1)$ is globally stable, we also choose initial functions $u_0=1\pm 0.5\cos(\pi x)\pm 0.1\cos(2\pi x)$. All the other parameters remain the same. We observe that the numerical solution eventually converges to $(1,1)$ for each initial function.

\begin{figure}[!ht]
\begin{center}
\subfigure[]{
\resizebox*{0.36\linewidth}{!}{\includegraphics{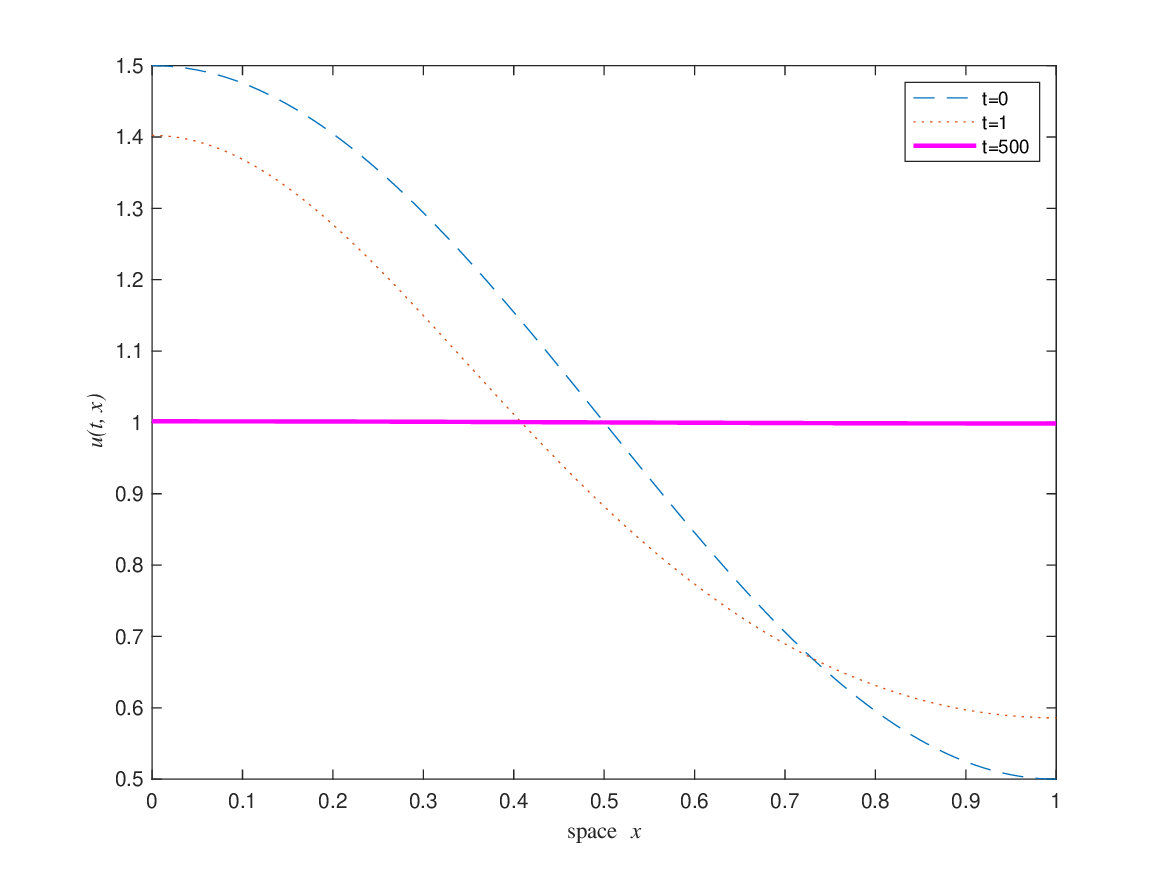}}
}%\label{para-elli-chi-11.98-u0-1+0.5cospix-u}
\subfigure[]{\resizebox*{0.36\linewidth}{!}{\includegraphics{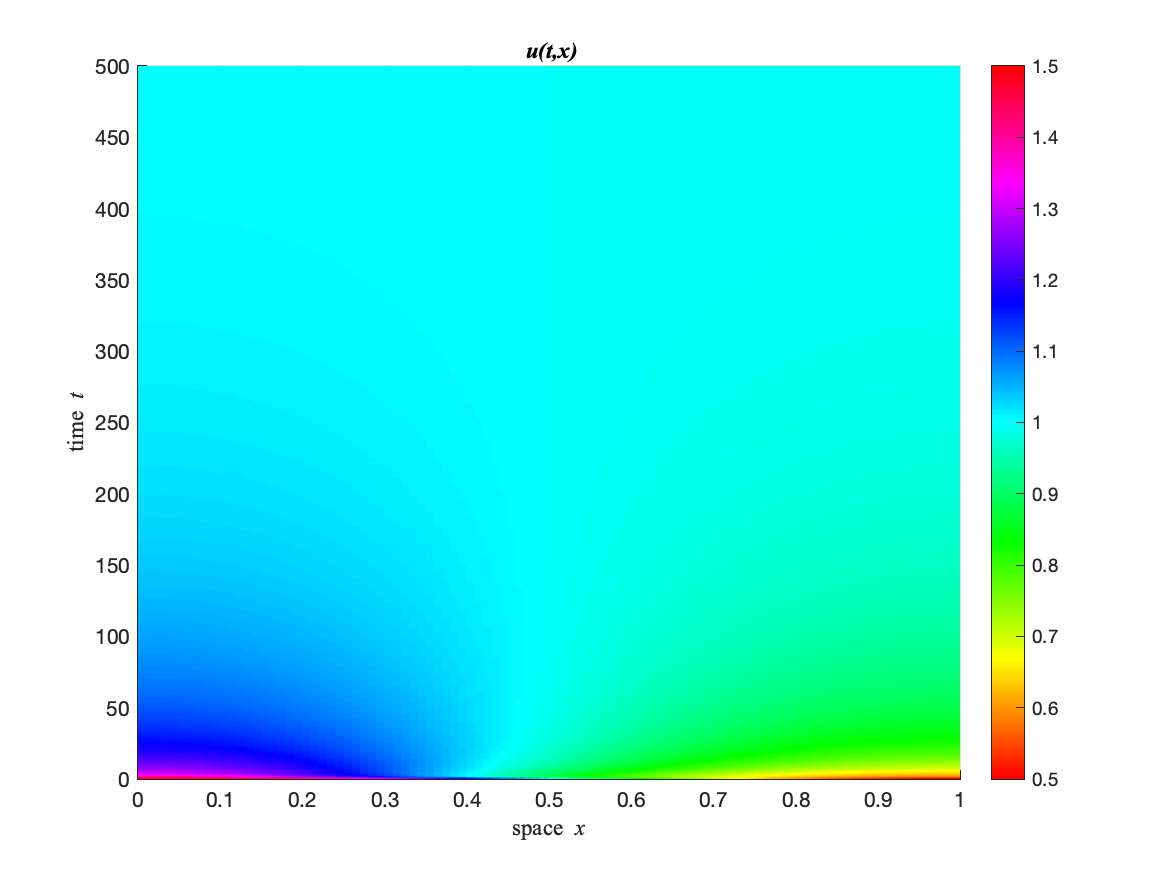}}\label{para-elli-chi-11.96-u0-1+0.5cos-u}}
%%%\subfigure[]{
%%%\resizebox*{0.24\linewidth}{!}{\includegraphics{para-elli-chi-11.96-u0-1+0.5cos-v-new}}%\label{para-elli-chi-11.98-u0-1+0.5cospix-v}
%%%\resizebox*{0.24\linewidth}{!}{\includegraphics{para-elli-chi-11.96-u0-1+0.5cos-v-2D}}\label{para-elli-chi-11.96-u0-1+0.5cos-v}}
%\subfigure[]{
%\resizebox*{0.24\linewidth}{!}{\includegraphics{para-elli-chi-11.96-u0-1-0.5cos-u-new}}
%\resizebox*{0.24\linewidth}{!}{\includegraphics{para-elli-chi-11.96-u0-1-0.5cos-u-2D}}
%\label{para-elli-chi-11.96-u0-1-0.5cos-u}}
%\subfigure[]{
%\resizebox*{0.24\linewidth}{!}{\includegraphics{para-elli-chi-11.96-u0-1-0.5cos-v-new}}
%\resizebox*{0.24\linewidth}{!}{\includegraphics{para-elli-chi-11.96-u0-1-0.5cos-v-2D}}
%\label{para-elli-chi-11.96-u0-1-0.5cos-v}}
\caption{(a) limit profile, (b) evolution of $u(t,x;u_0)$  with $\chi=11.96$, $a=b=\mu=\nu=L=1$ and initial function $u_0=1+0.5\cos(\pi x)$ %((a) and (b)) or initial function $u_0=1-0.5cos(\pi x)$ ((c) and (d)).
}
\label{chi-11.96}
\end{center}
\end{figure}

\noindent {\bf Observations from Experiment 1.} It is known that the constant solution $(1,1)$ is locally stable when $0<\chi<\chi^*$, is unstable when $\chi>\chi^*$,  and super-critical pitchfork bifurcations occurs when $\chi$ passes through $\chi^*$.   It is observed from  the experiment 1 that the constant solution $(1,1)$ is also  stable with respect to large perturbations when $0<\chi<\chi^*$. We conjecture that
the constant solution $(1,1)$ is globally stable when $0<\chi<\chi^*$.

\smallskip

\smallskip

\noindent{\bf Numerical Experiment 2.} In this numerical experiment,  we explore  the global  bifurcation of the constant solution $(1,1)$.  First of all, we choose $\chi=11.98$,  which is slightly larger than the first bifurcation value. Let $u_0=1+0.5\cos(\pi x)$. We observe that
 the numerical solution of $(u(t,x;u_0), v(t,x;u_0))$ changes very little when $t$ large enough   and converges  to a  nonconstant stationary solution $(u_1(x),v_1(x))$, which is close to the constant solution $(1,1)$  and    corresponds to the analytical nonconstant stationary solution described in the first formula of \eqref{upper-branch-eq} (see Figure \ref{chi-11.98} for  the profile of the numerical solution
 at $t=360$, which is close to the profile of the nonconstant stationary solution).  This  implies that, when $\chi=11.98$,  there exist two nonconstant stationary solutions $(u_1(x), v_1(x))$ and $(u_2(x), v_2(x))$, where  $u_2(x)=u_1(1-x)$, $v_2(x)=v_1(1-x)$,
 a fact that can be seen from the formulas \eqref{upper-branch-eq} and \eqref{lower-branch-eq}.

\begin{figure}[!ht]
\begin{center}
\subfigure[]{
\resizebox*{0.36\linewidth}{!}{\includegraphics{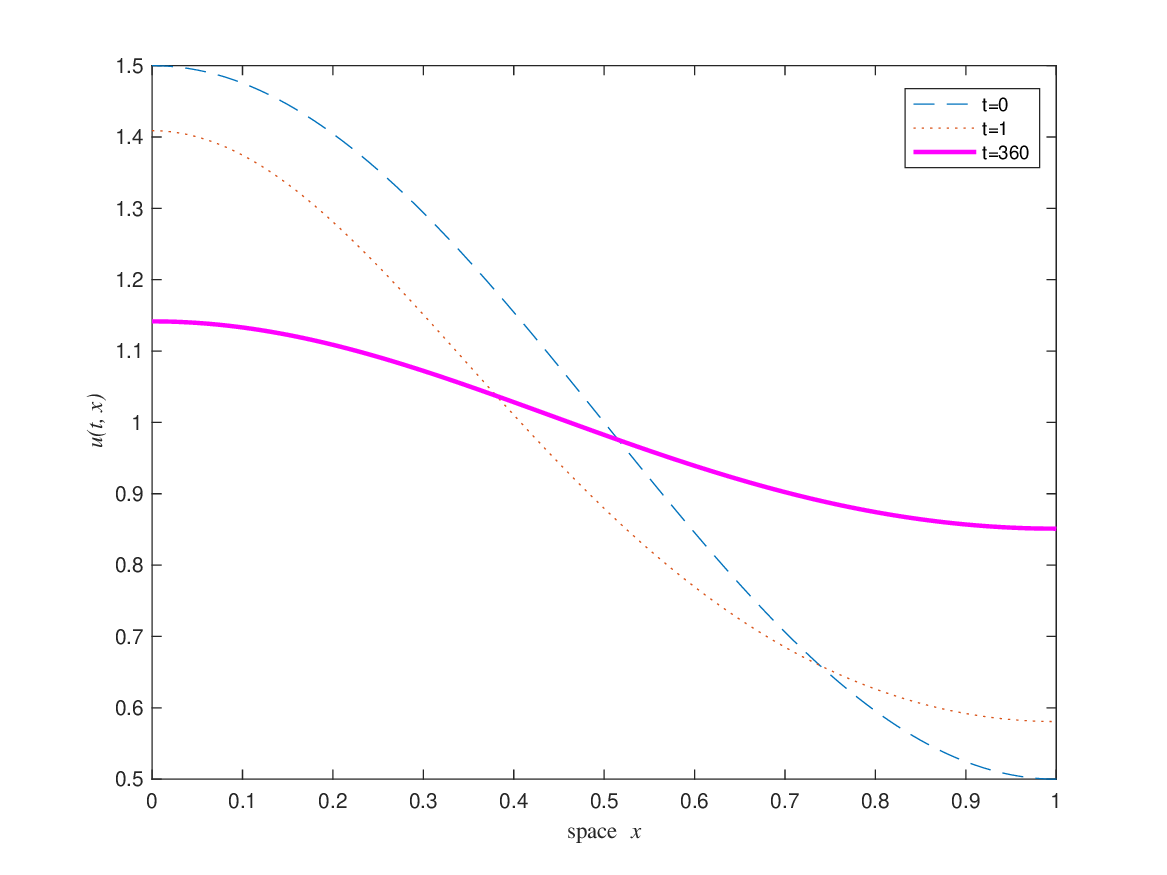}}
}%\label{para-elli-chi-11.98-u0-1+0.5cospix-u}
\subfigure[]{\resizebox*{0.36\linewidth}{!}{\includegraphics{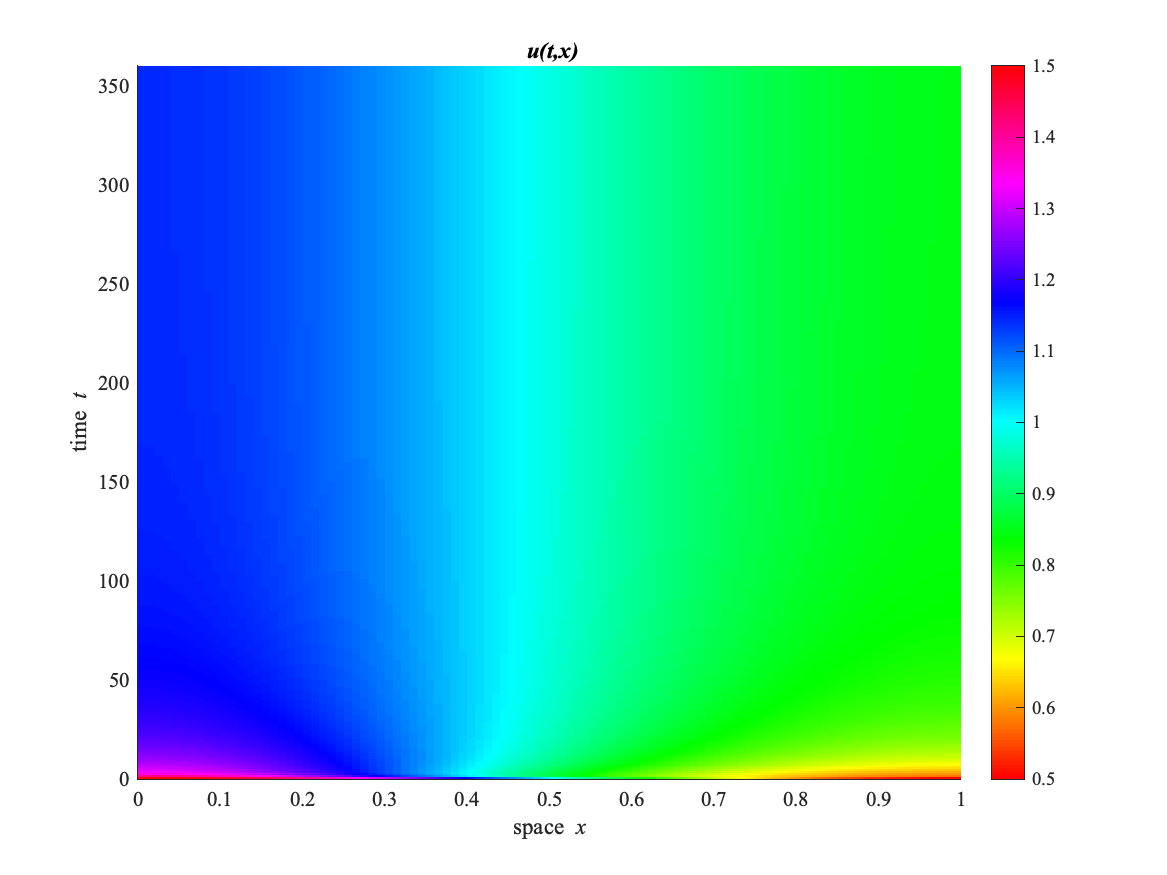}}%\label{para-elli-chi-11.98-u0-1+0.5cospix-u}}
}
%%%\subfigure[]{
%%%\resizebox*{0.24\linewidth}{!}{\includegraphics{para-elli-chi-11.98-u0-1+0.5cos-v-new}}%\label{para-elli-chi-11.98-u0-1+0.5cospix-v}
%%%\resizebox*{0.24\linewidth}{!}{\includegraphics{para-elli-chi-11.98-u0-1+0.5cospix-v-2D}}\label{para-elli-chi-11.98-u0-1+0.5cospix-v}}
%\subfigure[]{
%\resizebox*{0.24\linewidth}{!}{\includegraphics{para-elli-chi-11.98-u0-1-0.5cospix-u}}
%\resizebox*{0.24\linewidth}{!}{\includegraphics{para-elli-chi-11.98-u0-1-0.5cospix-u-2D}}%\label{para-elli-chi-11.98-u0-1-0.5cos-u}}
%\subfigure[]{
%\resizebox*{0.24\linewidth}{!}{\includegraphics{para-elli-chi-11.98-u0-1-0.5cos-v-new}}
%\resizebox*{0.24\linewidth}{!}{\includegraphics{para-elli-chi-11.98-u0-1-0.5cospix-v-2D}}%\label{para-elli-chi-11.96-u0-1-0.5cos-v}}
\caption{(a) limit profile, (b)  evolution of $u(t,x;u_0)$  with $\chi=11.98$, $a=b=\mu=\nu=L=1$ and initial function $u_0=1+0.5\cos(\pi x)$ %((a) and (b)) or initial function $u_0=1-0.5cos(\pi x)$ ((c) and (d)).
}
\label{chi-11.98}
\end{center}
\end{figure}

Next, we increase the value of $\chi$. Let $\chi=20$, which is not close to the bifurcation value $\chi^*\approx 11.9709$.  Let $u_0=1+0.5\cos(\pi x)$. All the other parameters remain the same. We observe that as time evolves, the numerical solution of {$(u(t,x;u_0), v(t,x;u_0))$} converges to a {nonconstant} stationary solution, the $u$-component of which  has a spike near the boundary $x=0$  (see Figure \ref{chi-20}), but the $v$-component does not develop spikes (see Figure \ref{chi-20-1}).

\begin{figure}[!ht]
\begin{center}
\subfigure[]{
\resizebox*{0.36\linewidth}{!}{\includegraphics{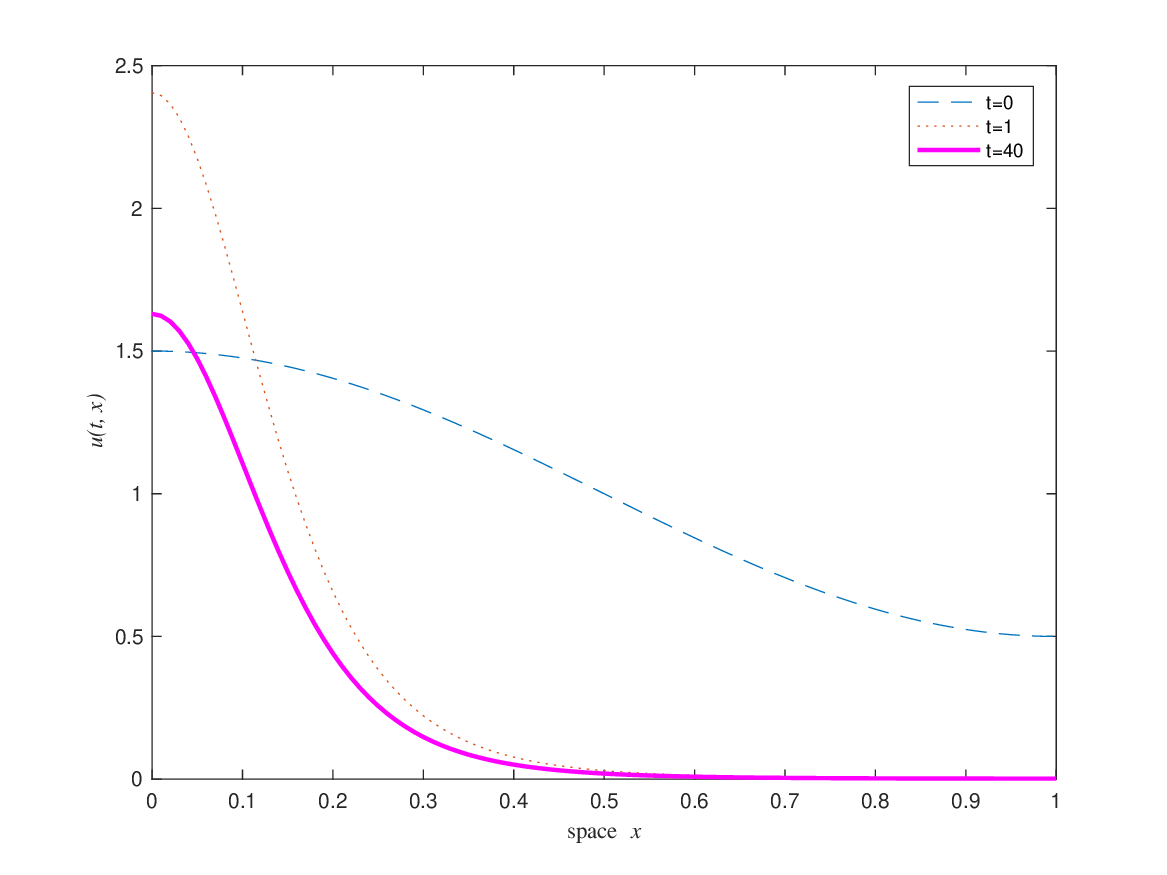}}
}
\subfigure[]{
\resizebox*{0.36\linewidth}{!}{\includegraphics{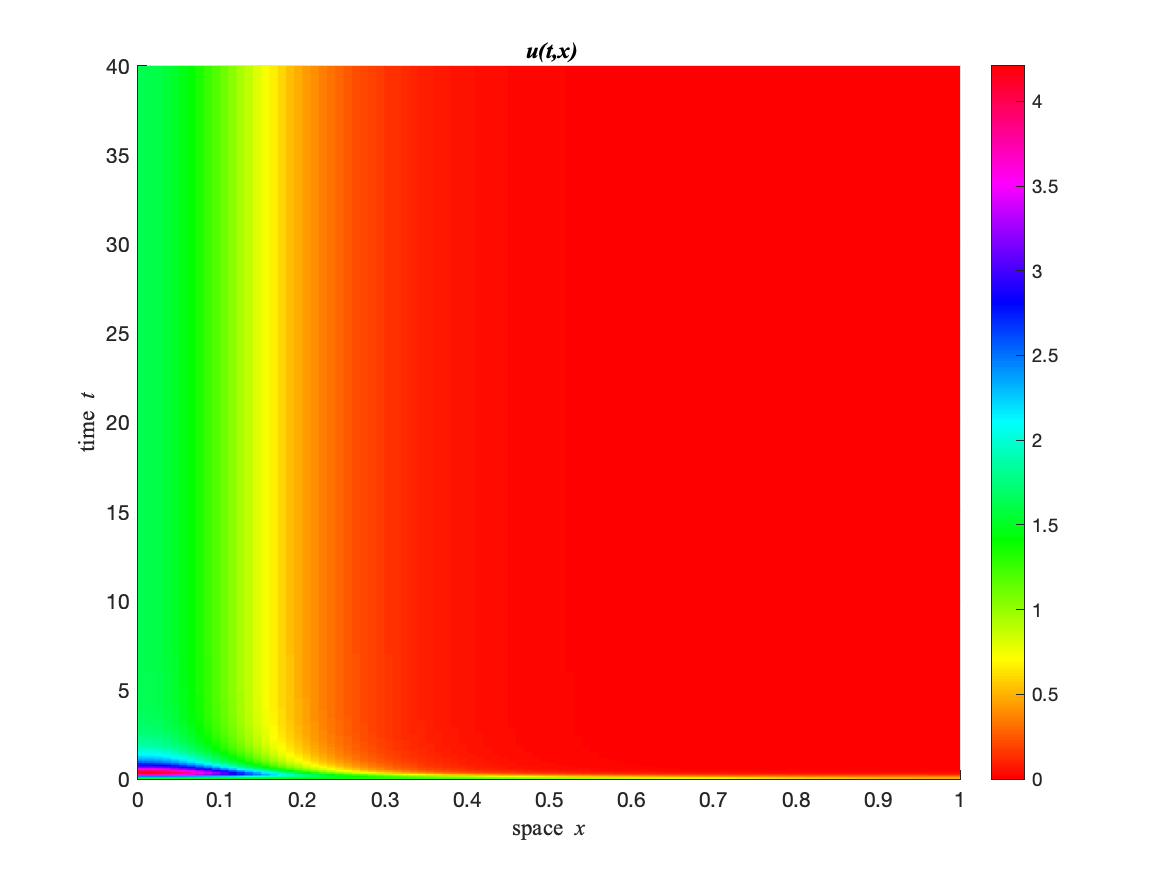}}\label{para-elli-chi-20-u0-1+0.5cospix-u}}
%%%\subfigure[]{
%%%\resizebox*{0.24\linewidth}{!}{\includegraphics{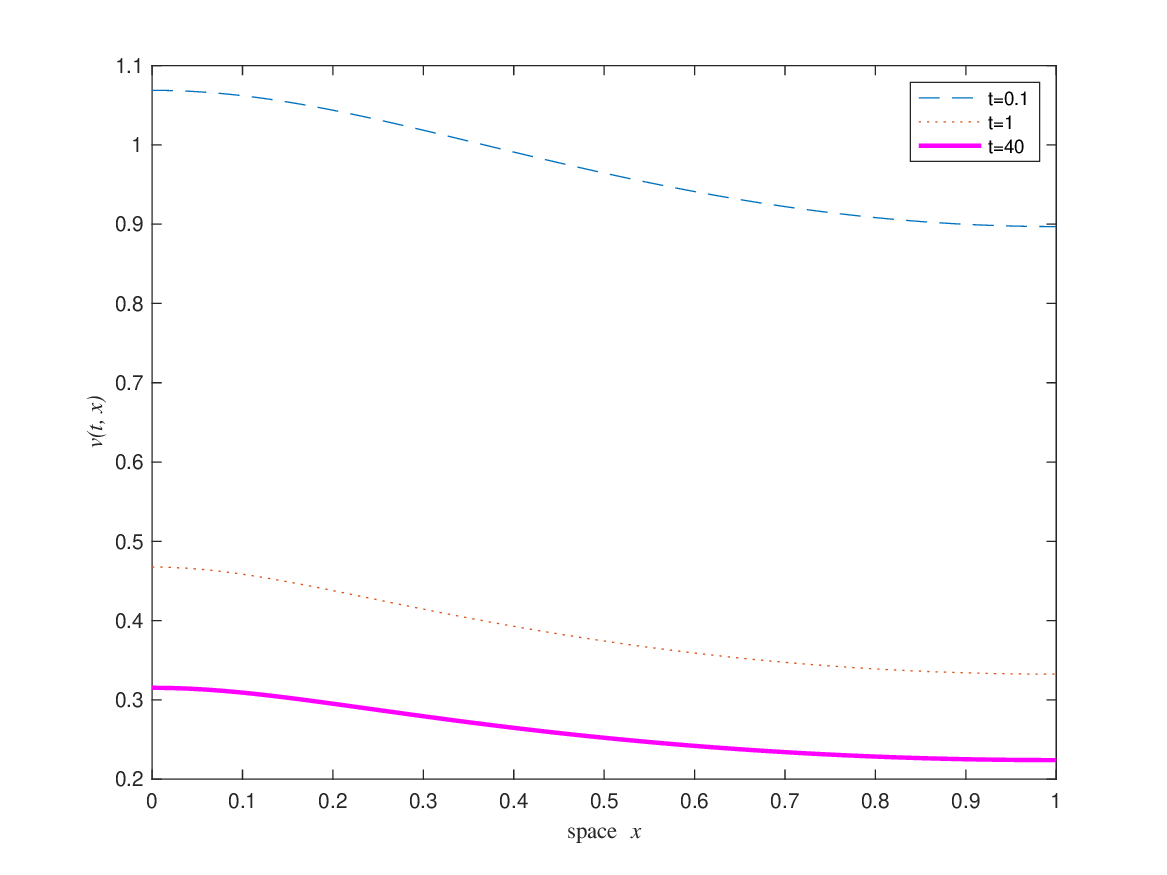}}
%%%\resizebox*{0.24\linewidth}{!}{\includegraphics{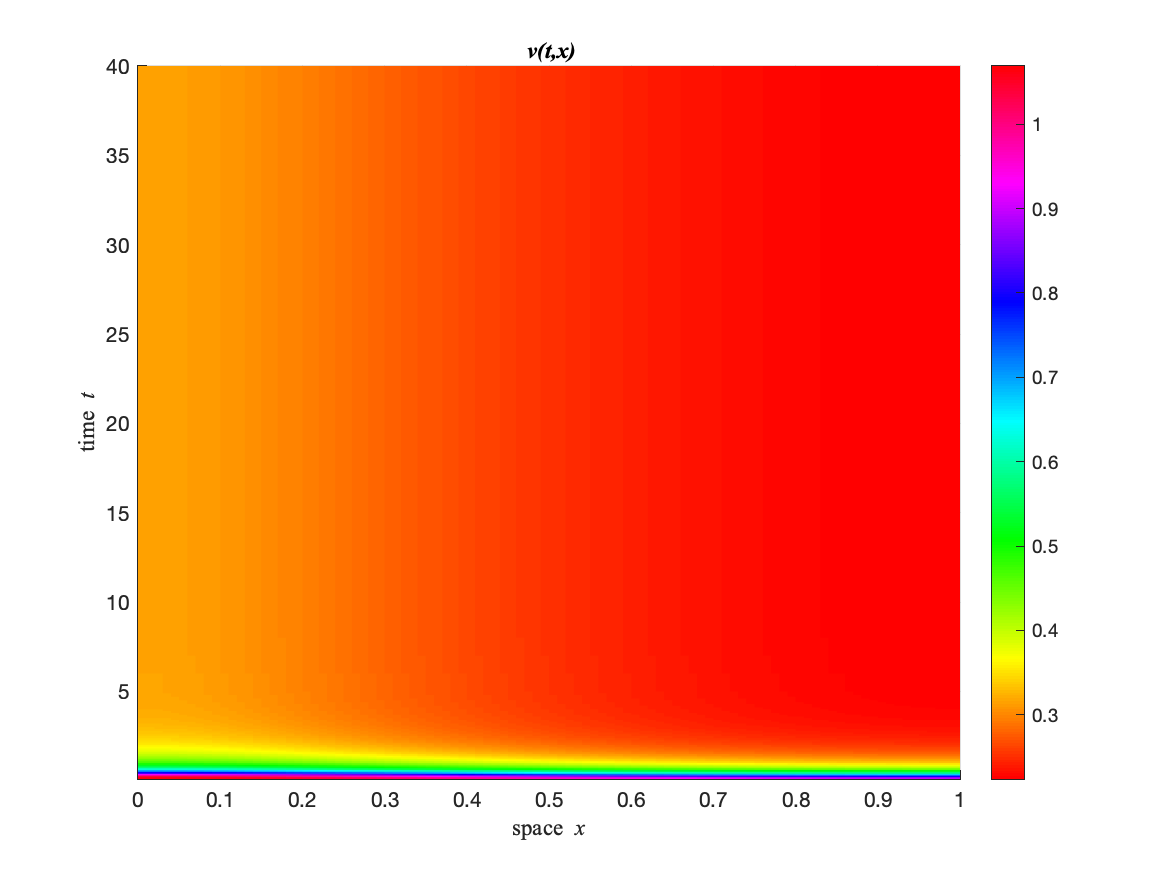}}\label{para-elli-chi-20-u0-1+0.5cospix-v}}
%\subfigure[]{
%\resizebox*{0.24\linewidth}{!}{\includegraphics{para-elli-chi-20-u0-1-0.5cospix-u}}
%\resizebox*{0.24\linewidth}{!}{\includegraphics{para-elli-chi-20-u0-1-0.5cospix-u-2D}}
%\label{para-elli-chi-20-u0-1-0.5cospix-u}}%}%\label{para-elli-chi-11.96-u0-1-0.5cos-u}}
%\subfigure[]{
%\resizebox*{0.24\linewidth}{!}{\includegraphics{para-elli-chi-20-u0-1-0.5cospix-v}}
%\resizebox*{0.24\linewidth}{!}{\includegraphics{para-elli-chi-20-u0-1-0.5cospix-v-2D}}
%\label{para-elli-chi-20-u0-1-0.5cospix-v}}%\label{para-elli-chi-11.96-u0-1-0.5cos-v}}
\caption{(a) limit profile, (b)  evolution of $u(t,x;u_0)$  with $\chi=20$, $a=b=\mu=\nu=L=1$ and initial function $u_0=1+0.5\cos(\pi x)$ % ((a) and (b)) or initial function $u_0=1-0.5cos(\pi x)$ ((c) and (d)).
}
\label{chi-20}
\end{center}
\end{figure}

\begin{figure}[!ht]
\begin{center}
%%%\subfigure[]{
%%%\resizebox*{0.24\linewidth}{!}{\includegraphics{para-elli-chi-20-u0-1+0.5cospix-u}}
%%%\resizebox*{0.24\linewidth}{!}{\includegraphics{para-elli-chi-20-u0-1+0.5cospix-u-2D}}\label{para-elli-chi-20-u0-1+0.5cospix-u}}
\subfigure[]{
\resizebox*{0.36\linewidth}{!}{\includegraphics{}}
}
\subfigure[]{
\resizebox*{0.36\linewidth}{!}{\includegraphics{}}\label{para-elli-chi-20-u0-1+0.5cospix-v}}
%\subfigure[]{
%\resizebox*{0.24\linewidth}{!}{\includegraphics{para-elli-chi-20-u0-1-0.5cospix-u}}
%\resizebox*{0.24\linewidth}{!}{\includegraphics{para-elli-chi-20-u0-1-0.5cospix-u-2D}}
%\label{para-elli-chi-20-u0-1-0.5cospix-u}}%}%\label{para-elli-chi-11.96-u0-1-0.5cos-u}}
%\subfigure[]{
%\resizebox*{0.24\linewidth}{!}{\includegraphics{para-elli-chi-20-u0-1-0.5cospix-v}}
%\resizebox*{0.24\linewidth}{!}{\includegraphics{para-elli-chi-20-u0-1-0.5cospix-v-2D}}
%\label{para-elli-chi-20-u0-1-0.5cospix-v}}%\label{para-elli-chi-11.96-u0-1-0.5cos-v}}
\caption{(a) limit profile, (b)  evolution  of $v(t,x;u_0)$  with $\chi=20$, $a=b=\mu=\nu=L=1$ and initial function $u_0=1+0.5\cos(\pi x)$ % ((a) and (b)) or initial function $u_0=1-0.5cos(\pi x)$ ((c) and (d)).
}
\label{chi-20-1}
\end{center}
\end{figure}

We further increase the value of $\chi$.  Let $\chi=40$ and take $u_0=1+ 0.5\cos(\pi x)$. The same phenomenon is observed (see Figure \ref{chi-40}). In particular, we observe that when $\chi$ becomes larger, the $u$-component of the numerical nonconstant stationary solution concentrates more towards the boundary $x=0$ (see Figure \ref{chi-40}),  which is consistent with Theorem {\ref{spiky-solu-thm} (3)}. By symmetry, the solution of \eqref{main-eq0} with initial function $u_0=1-0.5\cos(\pi x)$ converges to a nonconstant stationary solution,
which has a {spike} near the boundary $x=1$ (see Figure \ref{chi-40-1}).

\begin{figure}[!ht]
\begin{center}
\subfigure[]{
\resizebox*{0.36\linewidth}{!}{\includegraphics{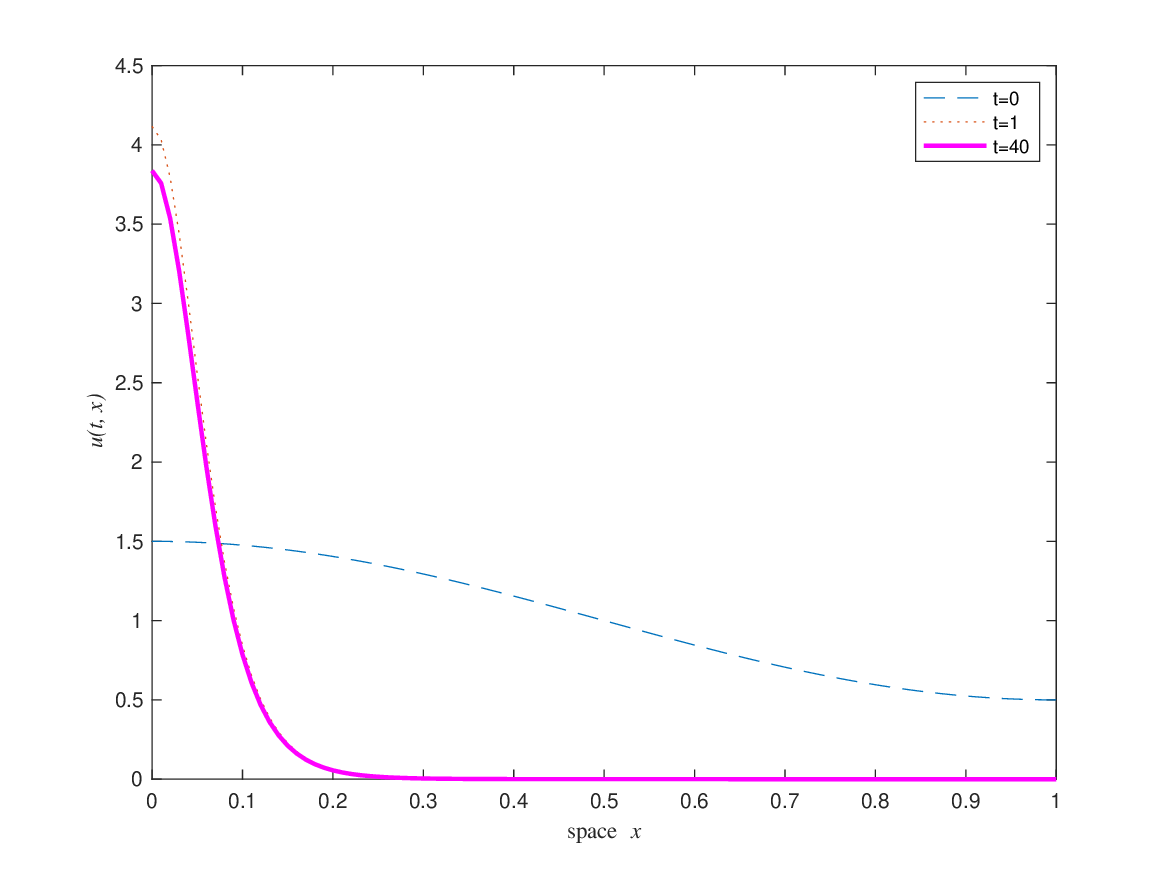}}
}\subfigure[]{
\resizebox*{0.36\linewidth}{!}{\includegraphics{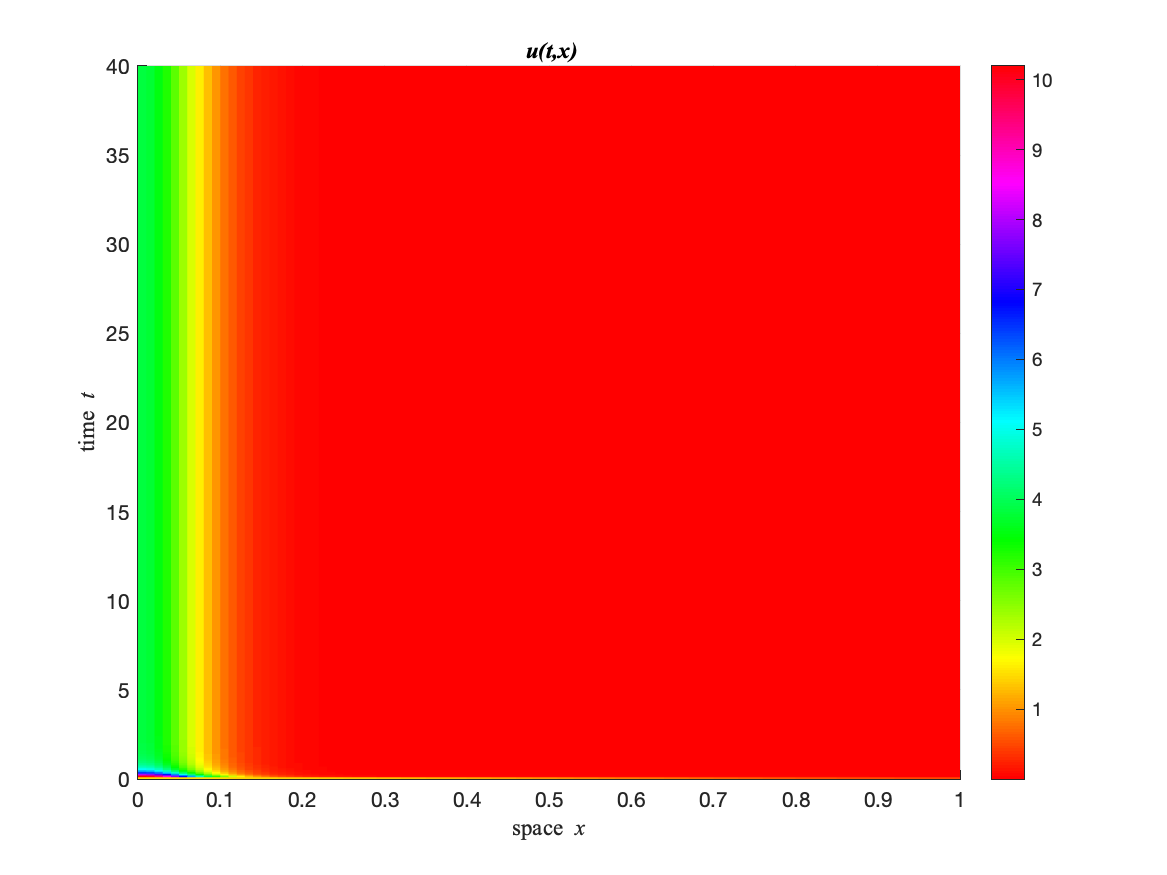}}}
%%%\subfigure[]{
%%%\resizebox*{0.24\linewidth}{!}{\includegraphics{para-elli-chi-40-u0-1+0.5cospix-v}}
%%%\resizebox*{0.24\linewidth}{!}{\includegraphics{para-elli-chi-40-u0-1+0.5cospix-v-2D}}}
\caption{(a) limit profile, (b)  evolution of $u(t,x;u_0)$  with $\chi=40$, $a=b=\mu=\nu=L=1$ and initial function $u_0=1+0.5\cos(\pi x)$}
\label{chi-40}
\end{center}
\end{figure}

\begin{figure}[!ht]
\begin{center}
\subfigure[]{
\resizebox*{0.36\linewidth}{!}{\includegraphics{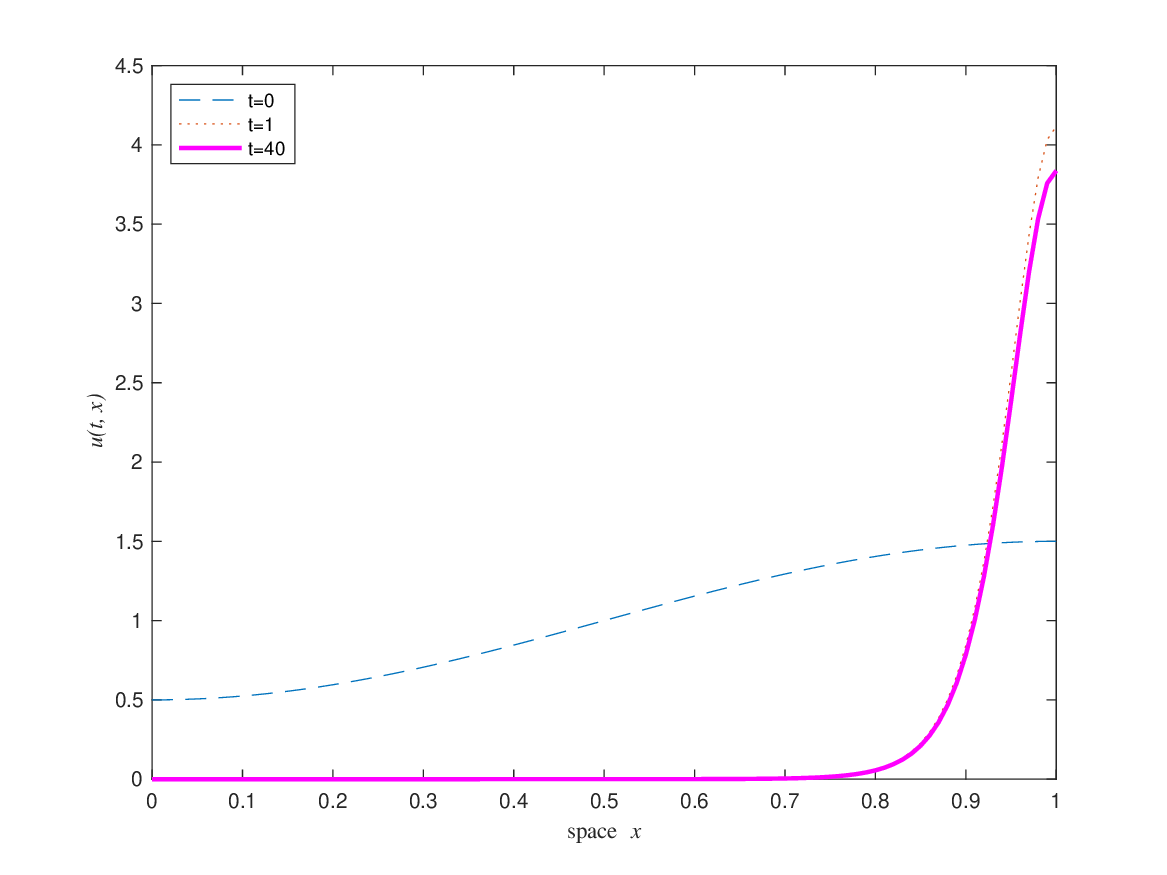}}
}\subfigure[]{
\resizebox*{0.36\linewidth}{!}{\includegraphics{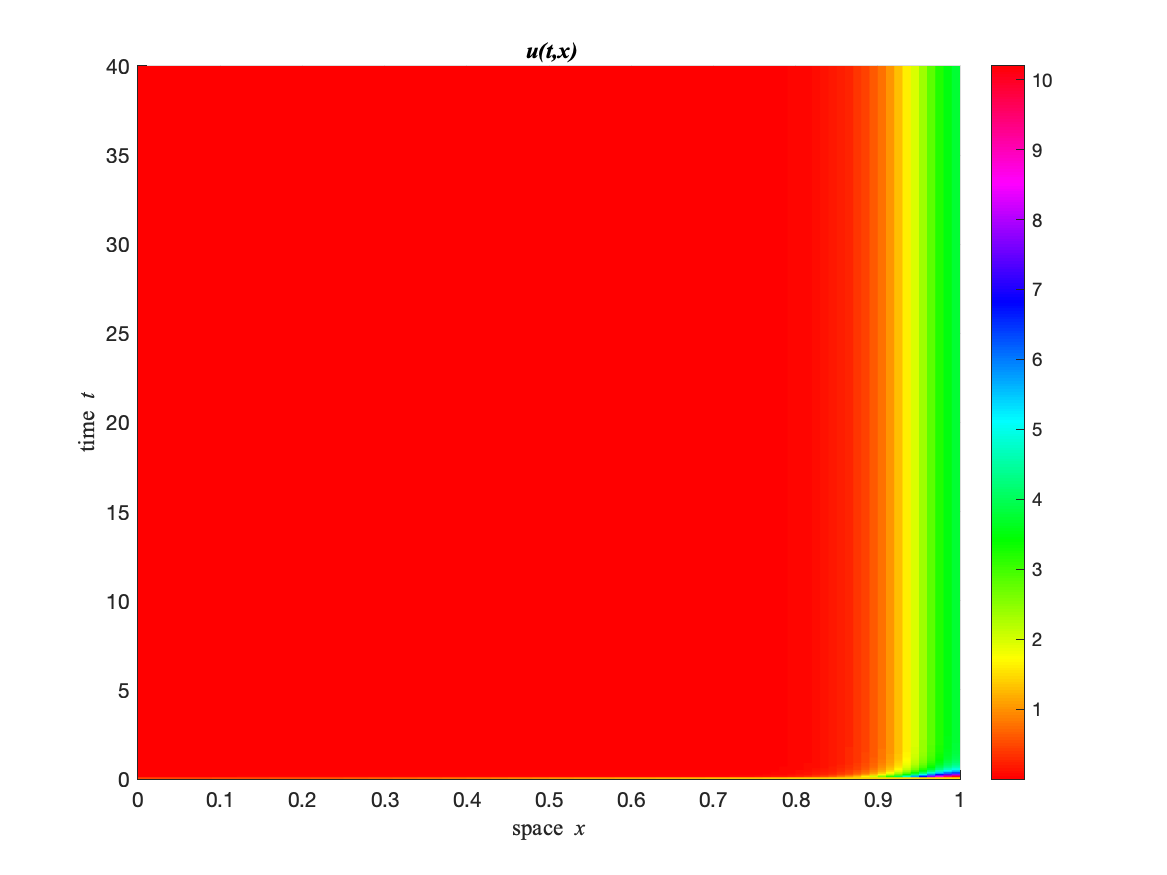}}}%}%\label{para-elli-chi-11.96-u0-1-0.5cos-u}}
%%%\subfigure[]{
%%%\resizebox*{0.24\linewidth}{!}{\includegraphics{para-elli-chi-40-u0-1-0.5cospix-v}}
%%%\resizebox*{0.24\linewidth}{!}{\includegraphics{para-elli-chi-40-u0-1-0.5cospix-v-2D}}}%\label{para-elli-chi-11.96-u0-1-0.5cos-v}}
\caption{(a) limit profile, (b)  evolution of $u(t,x;u_0)$   with $\chi=40$, $a=b=\mu=\nu=L=1$ and  initial function $u_0=1-0.5\cos(\pi x)$}
\label{chi-40-1}
\end{center}
\end{figure}

\smallskip

\noindent {\bf Observations from Experiment 2.}
For each $\chi$ in the experiment 2,
the same phenomenon is observed for the numerical solution with  the initial function $u_0=1+0.5\cos(\pi x)\pm 0.1\cos(2\pi x)$,
 that is,  the numerical  solutions  with initial functions
$u_0(x)=1+0.5\cos(\pi x)$ and $u_0(x)=1+0.5\cos(\pi x)\pm 0.1 \cos(2\pi x)$ converge
to the same nonconstant stationary solution.
 This indicates the nonconstant stationary solution is stable. According to the theoretical results, super-critical pitchfork  bifurcation occurs when $\chi$ passes through $\chi^{*}$ and  the  bifurcation solution is then locally stable for $\chi$ near $\chi^*$. Our numerical simulation confirms this result.
The numerical simulation also indicates that  this local bifurcation branch extends to $\chi=\infty$ and the bifurcation solutions are locally stable. Moreover, as $\chi$ increases,
the $u$-component of the bifurcation solution develops a spike near the boundary $x=0$ or $x=1$, but the $v$-component does not develop any spikes. In fact,
it is proved in {Theorem \ref{property-bif-solu-thm}} that for any stationary solutions of \eqref{main-eq0},
$v$ and $v_x$ stay bounded as $\chi$ increases, which implies that the $v$-component of the bifurcation solution does not develop spikes.

\subsection{Numerical simulations for the case $a=b=\mu=\nu=1$ and $L=6$}
\label{l-6-sec}

In this subsection, we discuss the numerical simulations for the case
$a=b=\mu=\nu=1$ and $L=6$. In this case, it is known that $\chi^*=\chi_2^*\approx 4.0085$;
$(\frac{a}{b},\frac{\nu}{\mu}\frac{a}{b})=(1,1)$ is locally  asymptotically stable when
$0<\chi<\chi^*$; and when $\chi$ passes through $\chi^*$, sub-critical pitchfork bifurcation occurs.
Throughout this subsection, $a=b=\mu=\nu=1$ and $L=6$.

\smallskip

\noindent{\bf Numerical Experiment 3.} In this numerical experiment, we   investigate the global  stability of the constant solution $(1,1)$.
 First of all, let $\chi=2$ and initial function $u_0=1\pm 0.5\cos(\frac{\pi x}{3})$. We observe that as time goes by, the numerical solution of $(u(t,x;u_0), v(t,x;u_0))$ converges to $(1,1)$ (see Figures \ref{chi-2} and \ref{chi-2-1}).

\begin{figure}[!ht]
\begin{center}
\subfigure[]{
\resizebox*{0.36\linewidth}{!}{\includegraphics{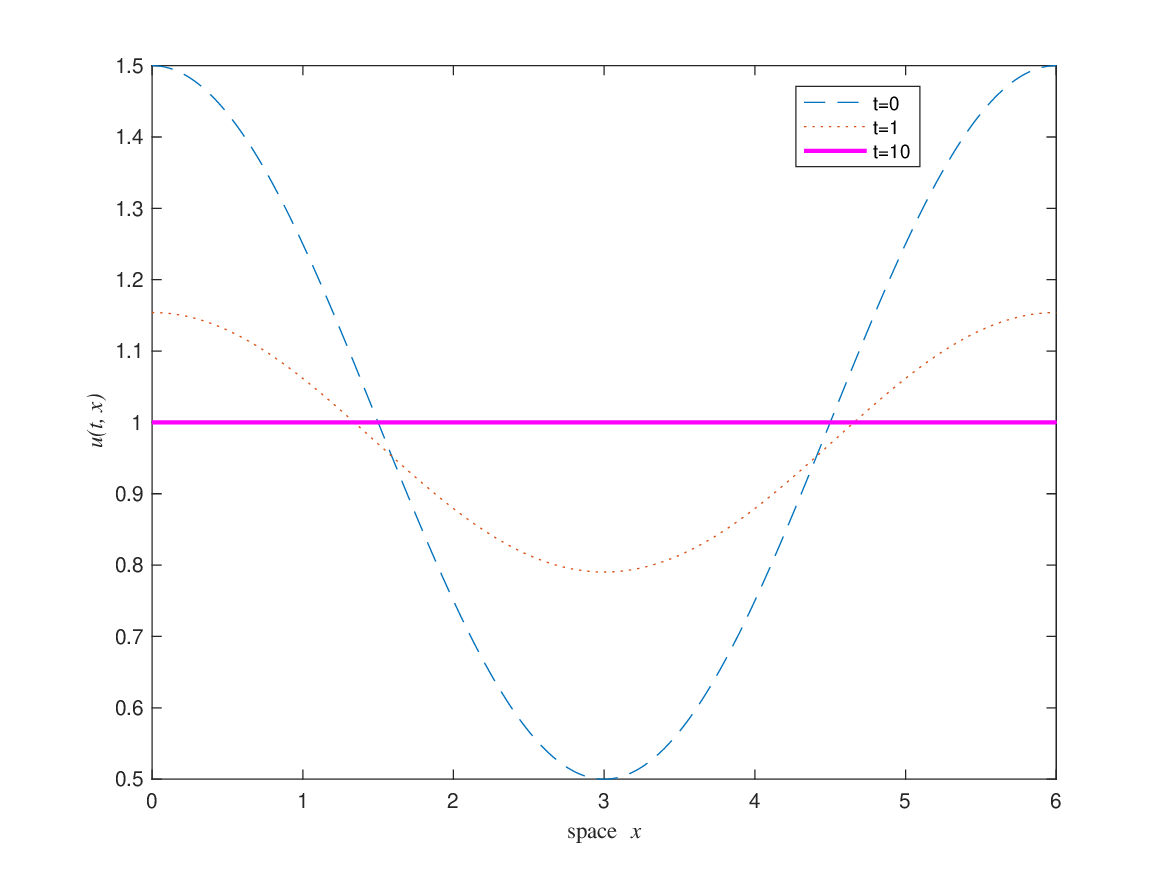}}
}
\subfigure[]{
\resizebox*{0.36\linewidth}{!}{\includegraphics{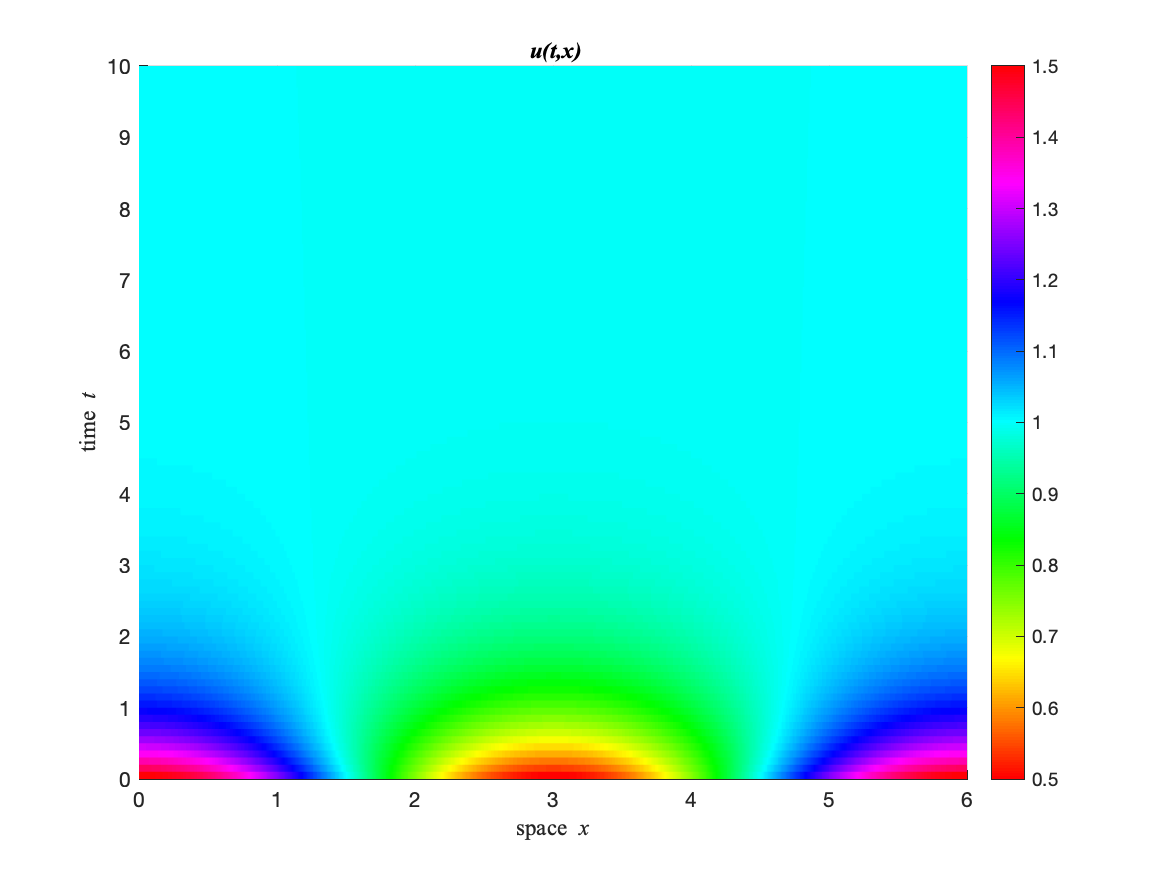}}\label{chi-2-L-6-u0-1+0.5cos0.3pix-u}}
%%%\subfigure[]{
%%%\resizebox*{0.24\linewidth}{!}{\includegraphics{chi-2-L-6-u0-1+0.5cos0.3pix-v}}
%%%\resizebox*{0.24\linewidth}{!}{\includegraphics{chi-2-L-6-u0-1+0.5cos0.3pix-v-2D}}\label{chi-2-L-6-u0-1+0.5cos0.3pix-v}}
%%%\subfigure[]{
%%%\resizebox*{0.24\linewidth}{!}{\includegraphics{chi-2-L-6-u0-1-0.5cos0.3pix-v}}
%%%\resizebox*{0.24\linewidth}{!}{\includegraphics{chi-2-L-6-u0-1-0.5cos0.3pix-v-2D}}\label{chi-2-L-6-u0-1-0.5cos0.3pix-v}}
\caption{(a) limit profile, (b)  evolution of $u(t,x;u_0)$  with $\chi=2$, $a=b=\mu=\nu=1$, $L=6$ and initial function $u_0=1+0.5\cos(\frac{\pi x}{3})$}
\label{chi-2}
\end{center}
\end{figure}

\begin{figure}[!ht]
\begin{center}
%%%\subfigure[]{
%%%\resizebox*{0.24\linewidth}{!}{\includegraphics{chi-2-L-6-u0-1+0.5cos0.3pix-v}}
%%%\resizebox*{0.24\linewidth}{!}{\includegraphics{chi-2-L-6-u0-1+0.5cos0.3pix-v-2D}}\label{chi-2-L-6-u0-1+0.5cos0.3pix-v}}
\subfigure[]{
\resizebox*{0.36\linewidth}{!}{\includegraphics{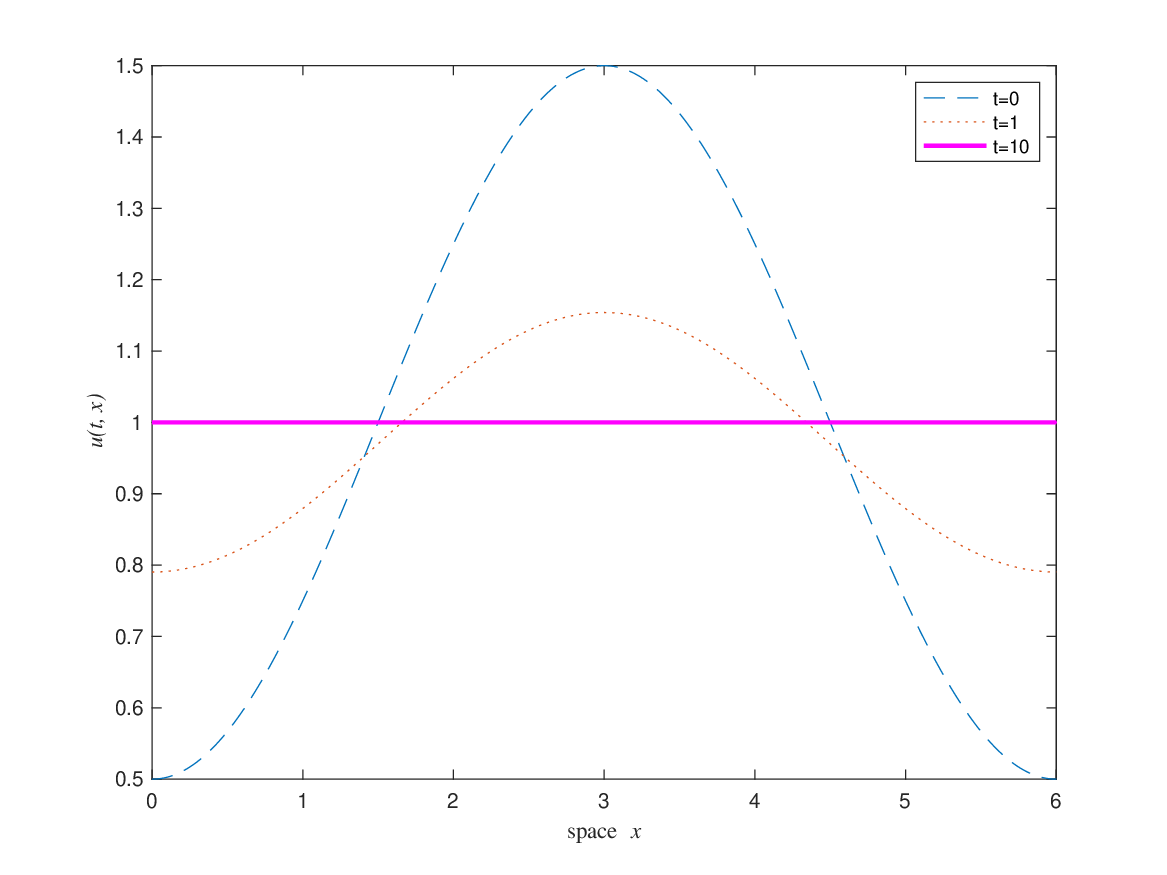}}
}
\subfigure[]{\resizebox*{0.36\linewidth}{!}{\includegraphics{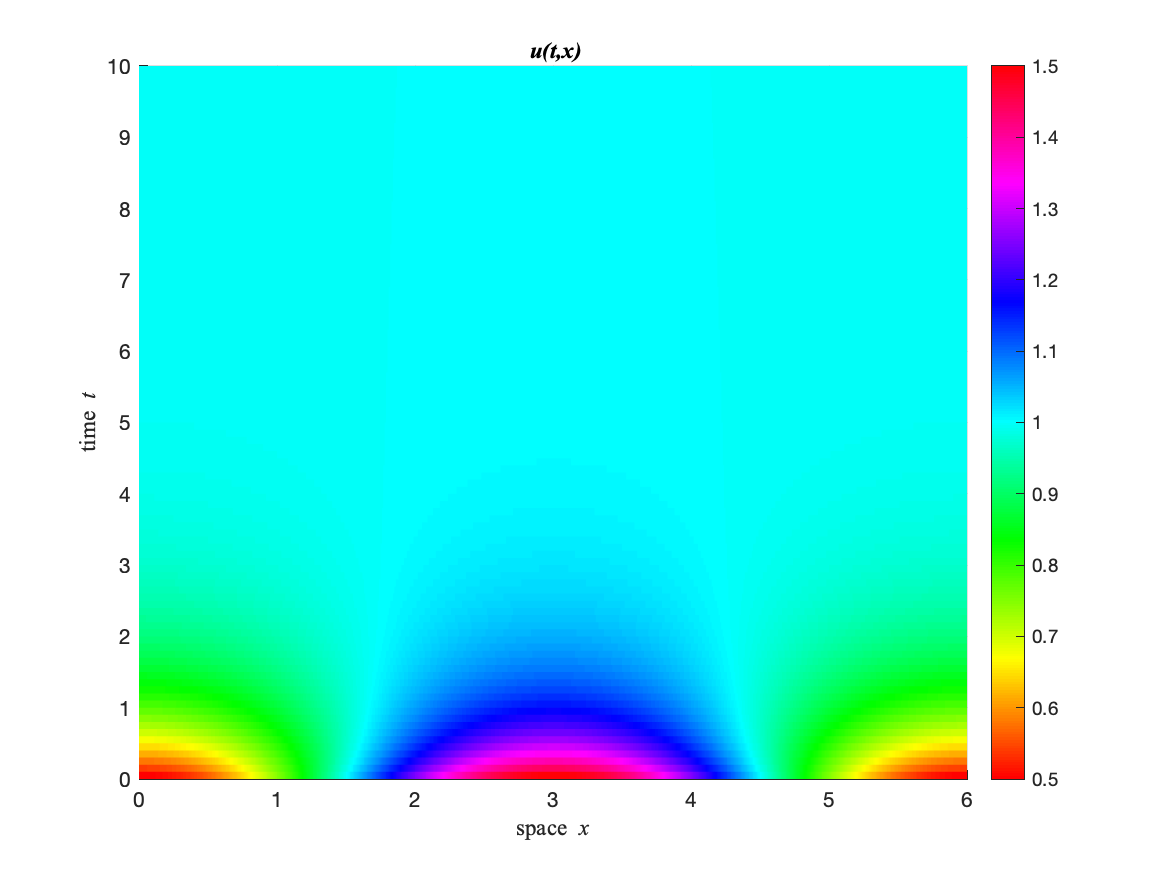}}\label{chi-2-L-6-u0-1-0.5cos0.3pix-u}}
%%%\subfigure[]{
%%%\resizebox*{0.24\linewidth}{!}{\includegraphics{chi-2-L-6-u0-1-0.5cos0.3pix-v}}
%%%\resizebox*{0.24\linewidth}{!}{\includegraphics{chi-2-L-6-u0-1-0.5cos0.3pix-v-2D}}\label{chi-2-L-6-u0-1-0.5cos0.3pix-v}}
\caption{(a) limit profile, (b) evolution of $u(t,x;u_0)$  with $\chi=2$, $a=b=\mu=\nu=1$, $L=6$ and  initial function $u_0=1-0.5\cos(\frac{\pi x}{3})$}
\label{chi-2-1}
\end{center}
\end{figure}

{Next, let $\chi=3.95$, which is slightly smaller than $\chi^*\approx 4.0085$.
Let $u_0=1\pm 0.1\cos(\frac{\pi x}{3})$. We observe that the numerical solution of $(u(t,x;u_0), v(t,x;u_0))$ changes very little when time is large enough and {converges} to the constant stationary solution $(1,1)$ (see Figures \ref{chi-3.95-1} and \ref{chi-3.95-2}).

%\begin{figure}[!ht]
%\begin{center}
%\subfigure[]{
%\resizebox*{0.24\linewidth}{!}{\includegraphics{chi-4-L-6-u0-1+0.5cos0.3pix-u}}
%\resizebox*{0.24\linewidth}{!}{\includegraphics{chi-4-L-6-u0-1+0.5cos0.3pix-u-2D}}\label{chi-4-L-6-u0-1+0.5cos0.3pix-u}}
%\subfigure[]{
%\resizebox*{0.24\linewidth}{!}{\includegraphics{chi-4-L-6-u0-1+0.5cos0.3pix-v}}
%\resizebox*{0.24\linewidth}{!}{\includegraphics{chi-4-L-6-u0-1+0.5cos0.3pix-v-2D}}\label{chi-4-L-6-u0-1+0.5cos0.3pix-v}}
%\subfigure[]{
%\resizebox*{0.24\linewidth}{!}{\includegraphics{chi-4-L-6-u0-1-0.5cos0.3pix-u}}
%\resizebox*{0.24\linewidth}{!}{\includegraphics{chi-4-L-6-u0-1-0.5cos0.3pix-u-2D}}\label{chi-4-L-6-u0-1-0.5cos0.3pix-u}}
%\subfigure[]{
%\resizebox*{0.24\linewidth}{!}{\includegraphics{chi-4-L-6-u0-1-0.5cos0.3pix-v}}
%\resizebox*{0.24\linewidth}{!}{\includegraphics{chi-4-L-6-u0-1-0.5cos0.3pix-v-2D}}\label{chi-4-L-6-u0-1-0.5cos0.3pix-v}}
%\caption{Formation of the stable nonconstant stationary solution with $\chi=4$, $L=6$ and initial function $u_0=1+0.5\cos(\frac{\pi x}{3})$ ((a) and (b)) or initial function $u_0=1-0.5\cos(\frac{\pi x}{3})$ ((c) and (d)).}
%\label{chi-4}
%\end{center}
%\end{figure}

\begin{figure}[!ht]
\begin{center}
\subfigure[]{
\resizebox*{0.36\linewidth}{!}{\includegraphics{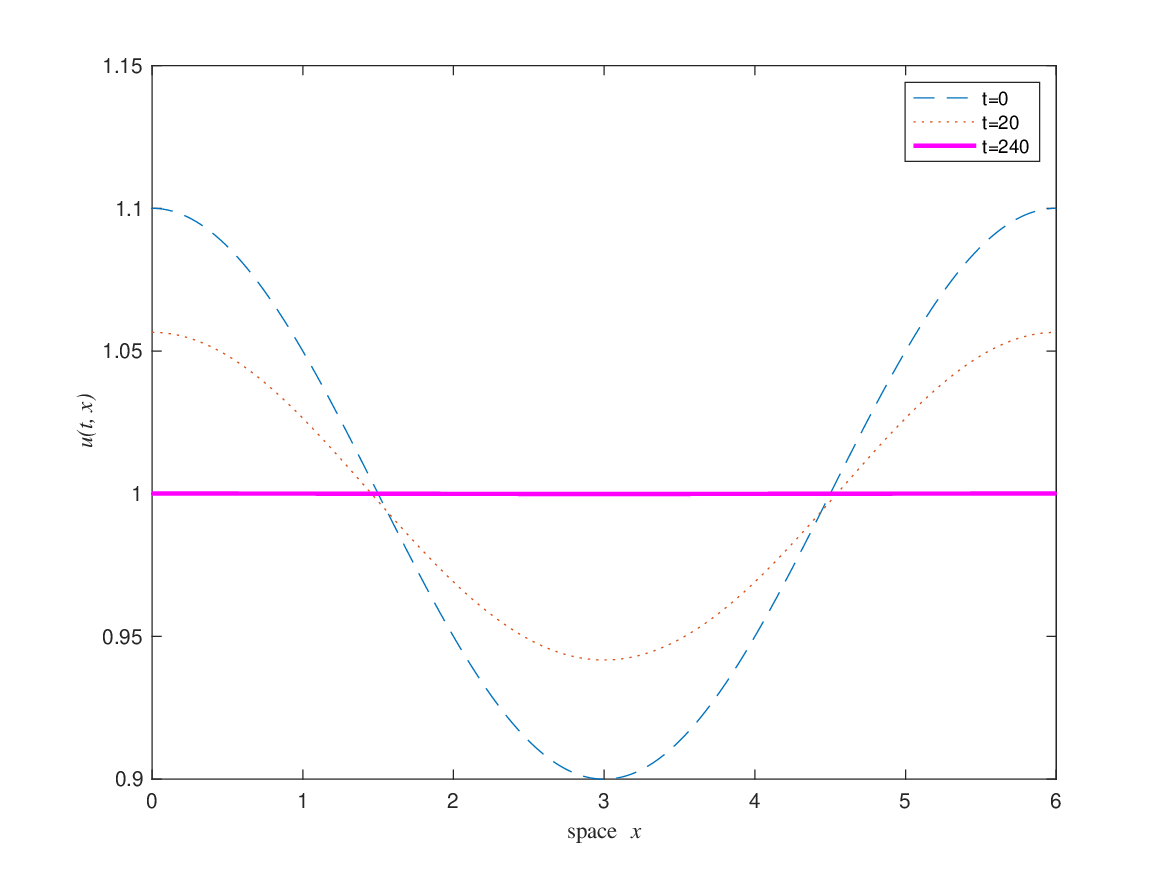}}
}
\subfigure[]{
\resizebox*{0.36\linewidth}{!}{\includegraphics{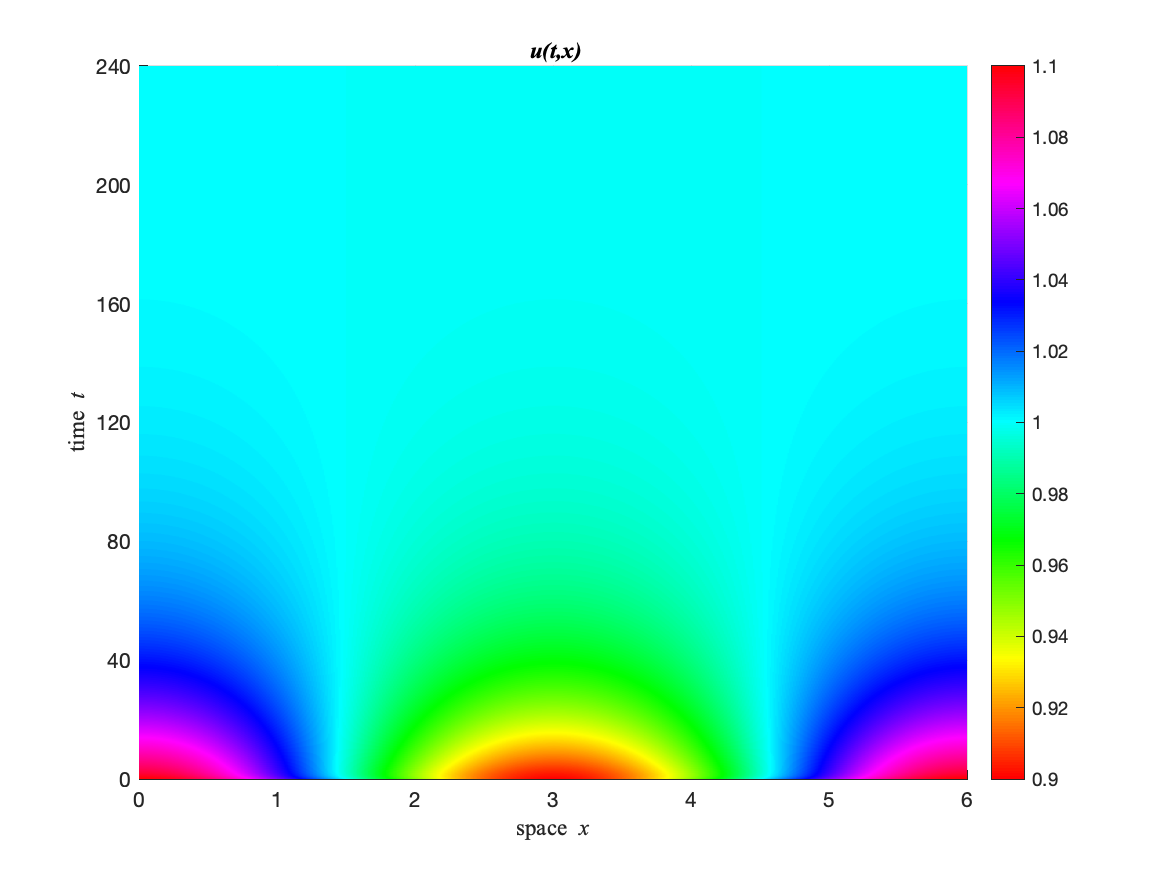}}
\label{chi-3.95-L-6-u0-1+0.1cos0.3pix-u}}
%%%\subfigure[]{
%%%\resizebox*{0.24\linewidth}{!}{\includegraphics{chi-3.95-L-6-u0-1+0.1cos0.3pix-v}}
%%%\resizebox*{0.24\linewidth}{!}{\includegraphics{chi-3.95-L-6-u0-1+0.1cos0.3pix-v-2D}}\label{chi-3.95-L-6-u0-1+0.1cos0.3pix-v}}
\caption{(a) limit profile, (b)  evolution of $u(t,x;u_0)$  with $\chi=3.95$,  $a=b=\mu=\nu=1$, $L=6$ and initial function $u_0=1+0.1\cos(\frac{\pi x}{3})$}
\label{chi-3.95-1}
\end{center}
\end{figure}

\begin{figure}[!ht]
\begin{center}
\subfigure[]{
\resizebox*{0.36\linewidth}{!}{\includegraphics{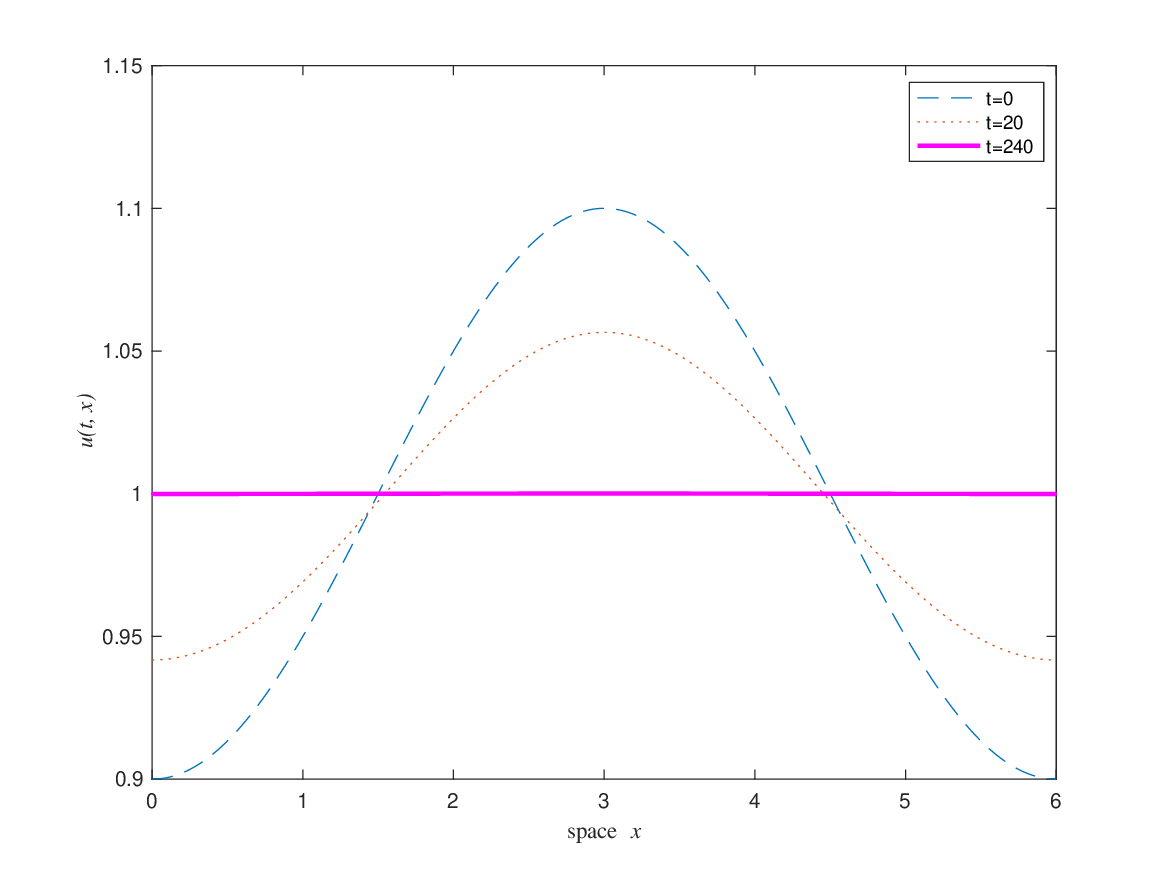}}
}
\subfigure[]{\resizebox*{0.36\linewidth}{!}{\includegraphics{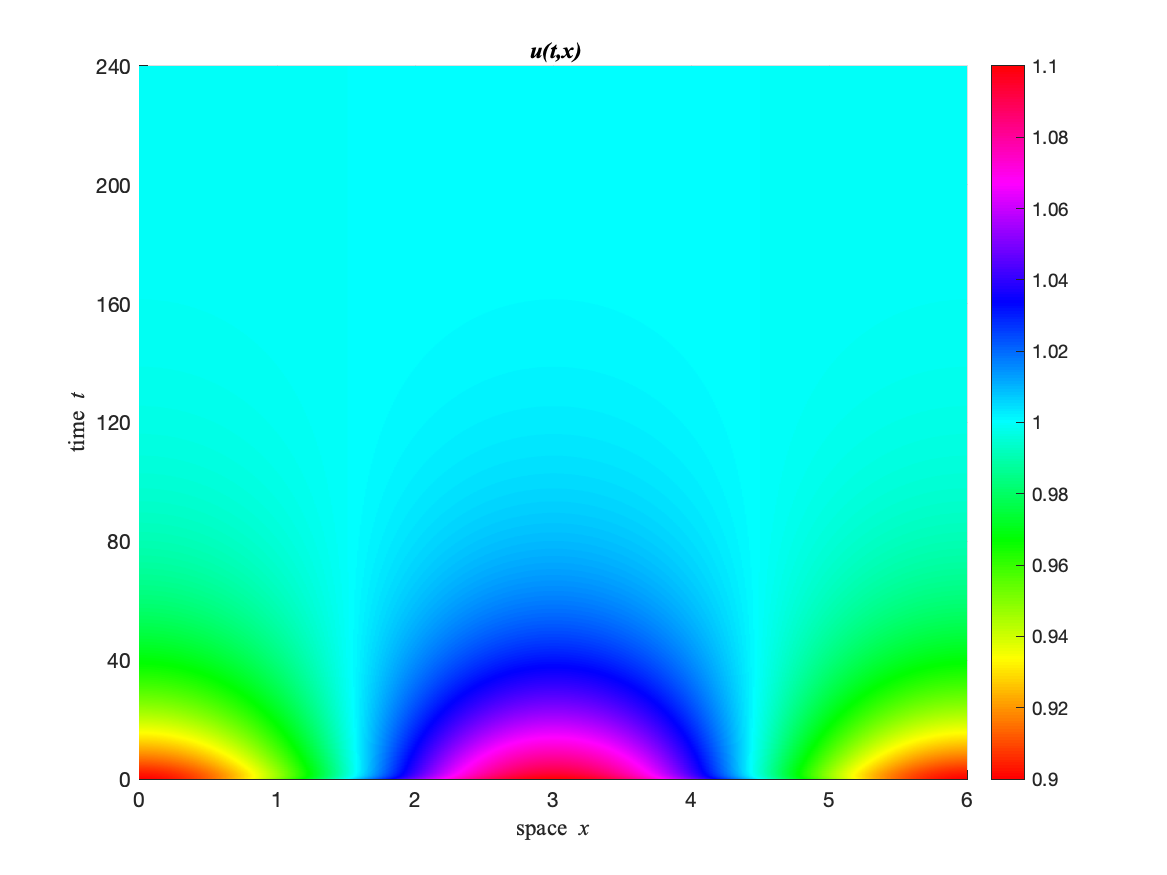}}\label{chi-3.95-L-6-u0-1-0.1cos0.3pix-u}}
%%%\subfigure[]{
%%%\resizebox*{0.24\linewidth}{!}{\includegraphics{chi-3.95-L-6-u0-1-0.1cos0.3pix-v}}
%%%\resizebox*{0.24\linewidth}{!}{\includegraphics{chi-3.95-L-6-u0-1-0.1cos0.3pix-v-2D}}\label{chi-3.95-L-6-u0-1-0.1cos0.3pix-v}}
\caption{(a) limit profile, (b)  evolution of $u(t,x;u_0)$  with $\chi=3.95$,  $a=b=\mu=\nu=1$, $L=6$ and initial function $u_0=1-0.1\cos(\frac{\pi x}{3})$}
\label{chi-3.95-2}
\end{center}
\end{figure}

For $\chi=3.95$, let  $u_0=1\pm 0.5\cos(\frac{\pi x}{3})$.  We observe that the numerical solution of $(u(t,x;u_0)$, $v(t,x;u_0))$ changes very little when time is large enough and {converges} to nonconstant stationary solutions, which are not so close to the constant stationary solution $(1,1)$ (see Figures \ref{chi-3.95-3} and \ref{chi-3.95-4}).

\begin{figure}[!ht]
\begin{center}
\subfigure[]{
\resizebox*{0.36\linewidth}{!}{\includegraphics{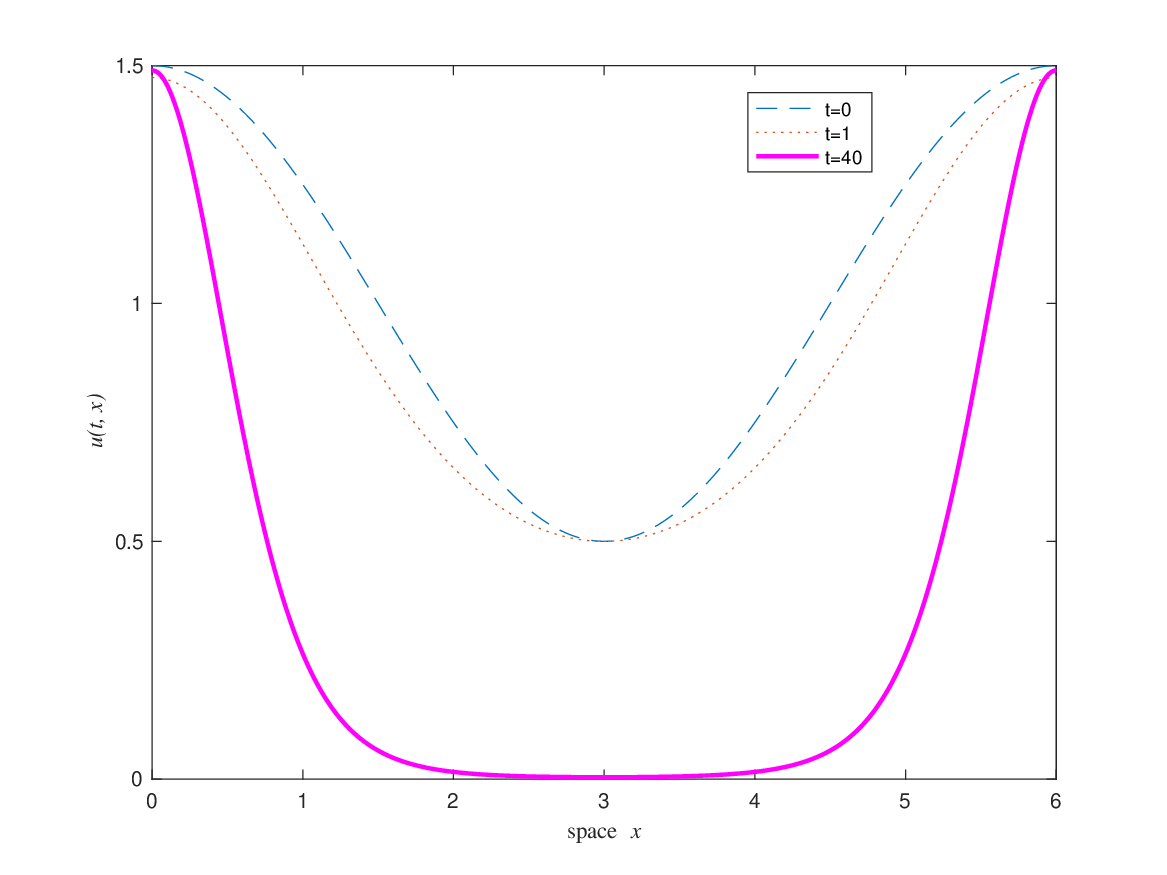}}
}
\subfigure[]{
\resizebox*{0.36\linewidth}{!}{\includegraphics{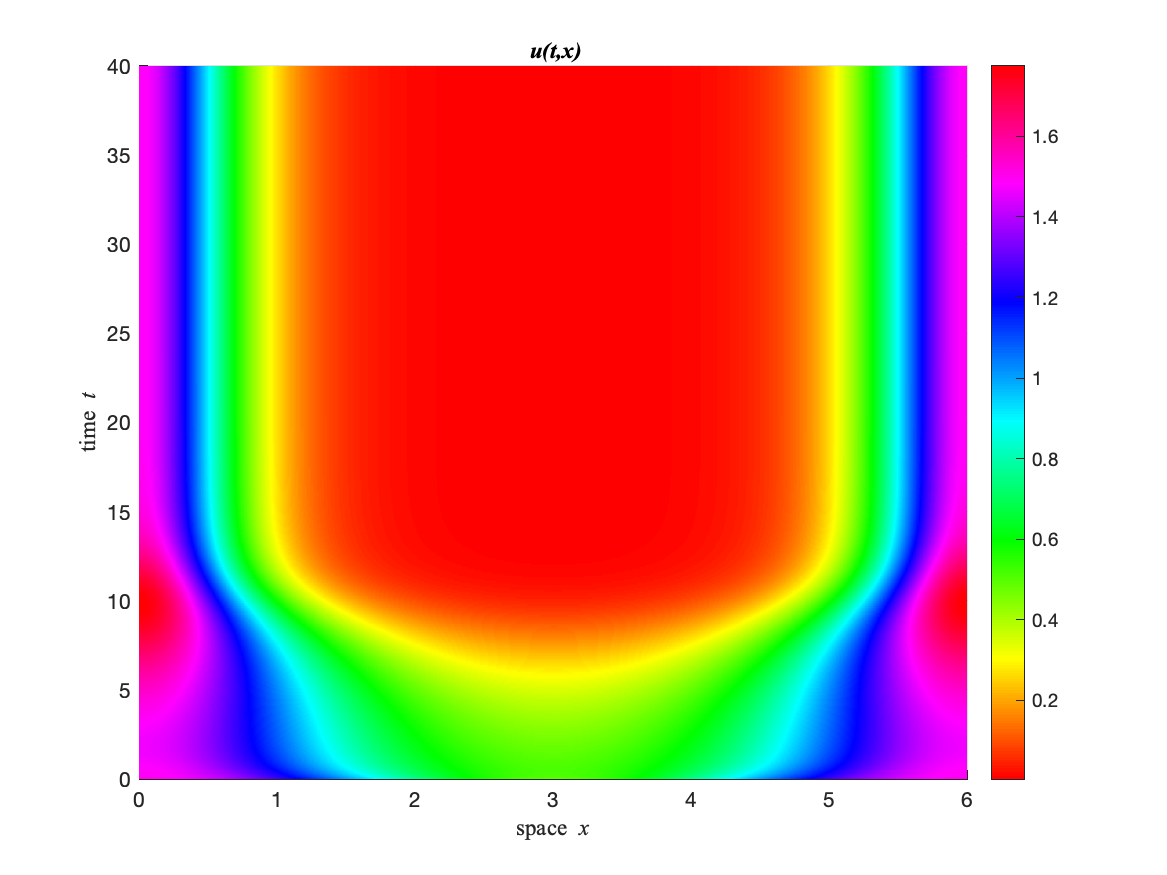}}\label{chi-3.95-L-6-u0-1+0.5cos0.3pix-u}}
%%%\subfigure[]{
%%%\resizebox*{0.24\linewidth}{!}{\includegraphics{chi-3.95-L-6-u0-1+0.5cos0.3pix-v}}
%%%\resizebox*{0.24\linewidth}{!}{\includegraphics{chi-3.95-L-6-u0-1+0.5cos0.3pix-v-2D}}\label{chi-3.95-L-6-u0-1+0.5cos0.3pix-v}}
\caption{(a) limit profile, (b) evolution of $u(t,x;u_0)$  with $\chi=3.95$, $a=b=\mu=\nu=1$,  $L=6$ and initial function $u_0=1+0.5\cos(\frac{\pi x}{3})$}
\label{chi-3.95-3}
\end{center}
\end{figure}

\begin{figure}[!ht]
\begin{center}
\subfigure[]{
\resizebox*{0.36\linewidth}{!}{\includegraphics{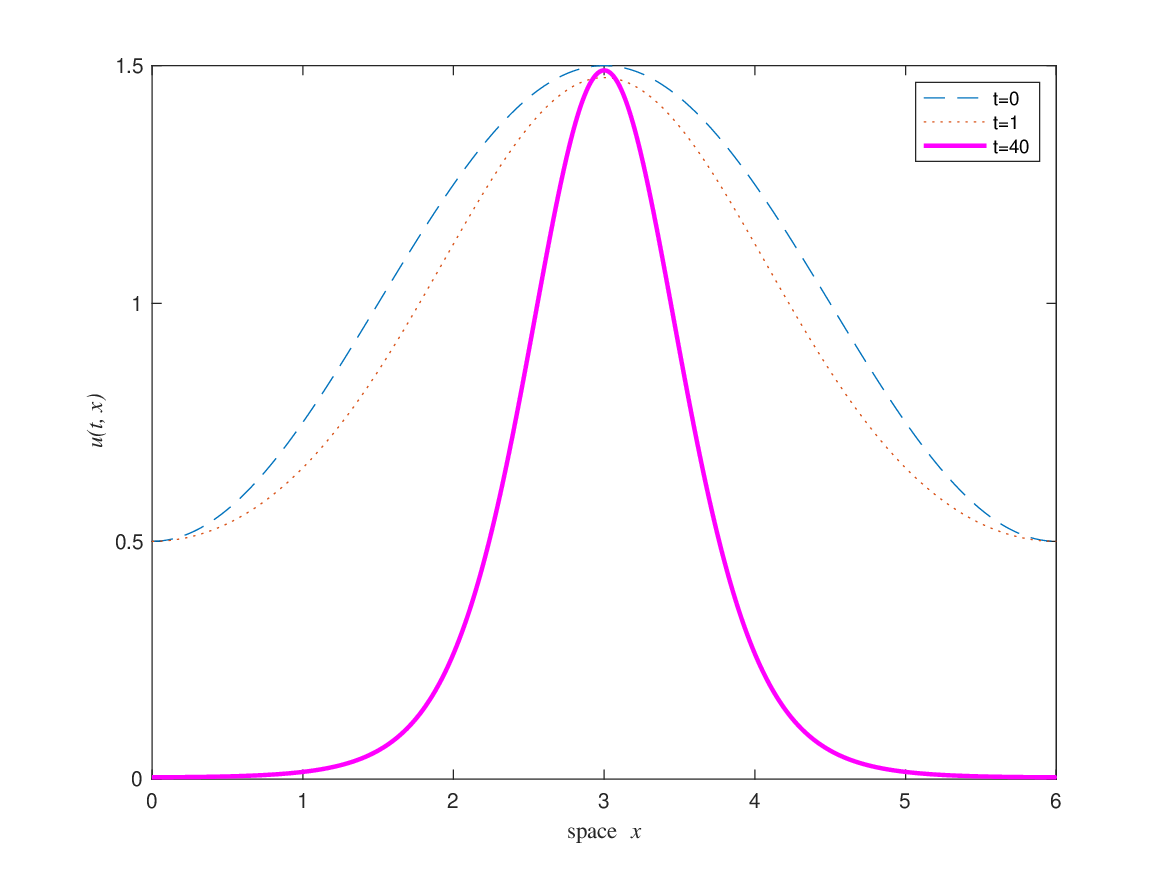}}
}
\subfigure[]{\resizebox*{0.36\linewidth}{!}{\includegraphics{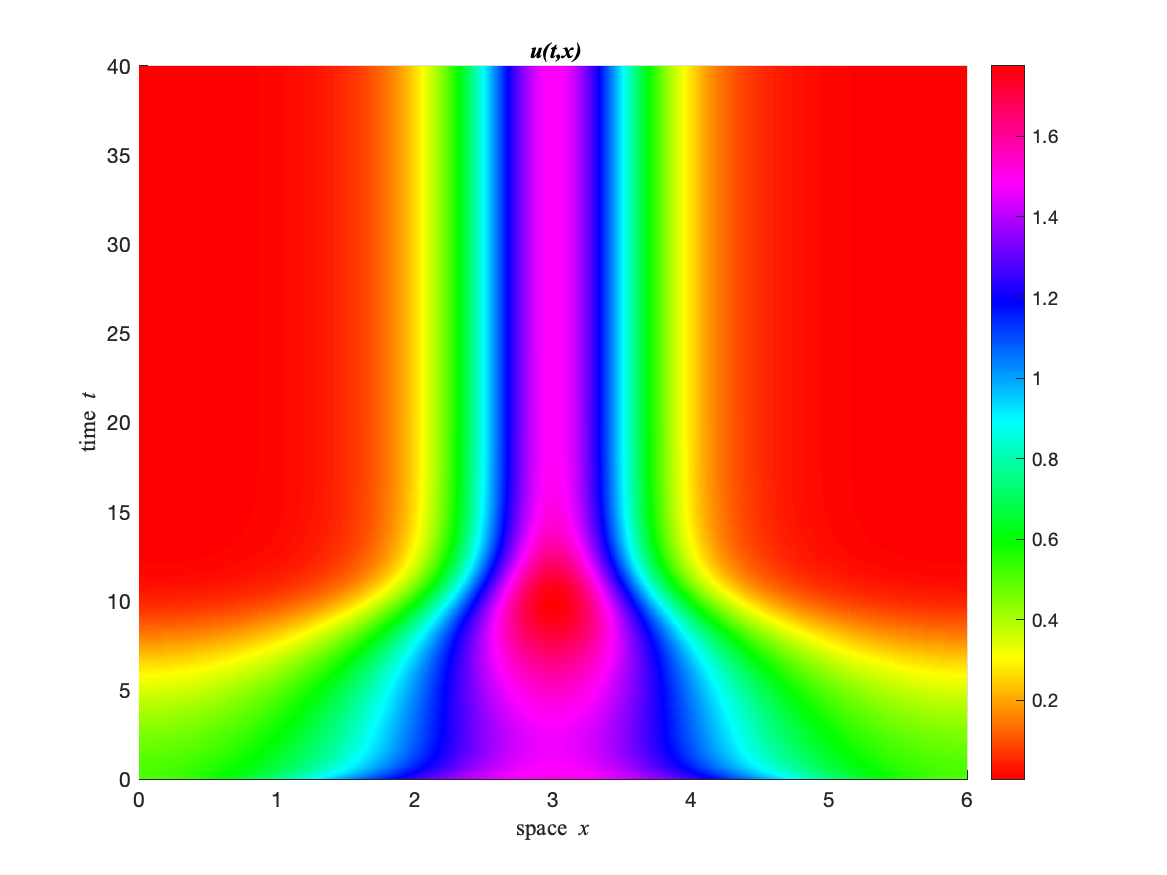}}\label{chi-3.95-L-6-u0-1-0.5cos0.3pix-u}}
%%%\subfigure[]{
%%%\resizebox*{0.24\linewidth}{!}{\includegraphics{chi-3.95-L-6-u0-1-0.5cos0.3pix-v}}
%%%\resizebox*{0.24\linewidth}{!}{\includegraphics{chi-3.95-L-6-u0-1-0.5cos0.3pix-v-2D}}\label{chi-3.95-L-6-u0-1-0.5cos0.3pix-v}}
\caption{(a) limit profile, (b)  evolution of $u(t,x;u_0)$  with $\chi=3.95$, $a=b=\mu=\nu=1$,  $L=6$ and initial function $u_0=1-0.5\cos(\frac{\pi x}{3})$}
\label{chi-3.95-4}
\end{center}
\end{figure}

\smallskip

\noindent{\bf  Observations from  Experiment 3.} It is known that when $0<\chi<\chi^*$, the constant solution $(1,1)$ is locally stable;
when $\chi>\chi^*$, $(1,1)$ is unstable, and sub-critical pitchfork bifurcation occurs when
$\chi$ passes through $\chi^*$,  that is, there are two unstable nonconstant stationary solutions bifurcating from $(1,1)$ for $\chi<\chi^*$ and $\chi$ is near $\chi^*$. It is observed from the experiment 3 that when $\chi<\chi^*$ and $\chi$ is near $\chi^*$,
the constant stationary solution $(1,1)$ is not globally stable and
there are other locally stable nonconstant stationary solutions, which are on the extension of
the local pitchfork bifurcation branch (see more in the observation of experiment 4 in the following).

\smallskip

\noindent {\bf Numerical Experiment 4.} In this experiment, we investigate the global bifurcation of the constant solution $(1,1)$.
First,  let $\chi=4.01$ which is slightly larger than the first bifurcation value.
Let $u_0=1\pm 0.5\cos(\frac{\pi x}{3})$.  As in the case that $\chi={3.95}$, we observe that the numerical solution of $(u(t,x;u_0)$, $v(t,x;u_0))$ changes very little when time is large enough and converge to nonconstant stationary solutions, which are not so close to the constant stationary solution $(1,1)$
 (see Figures \ref{chi-4.01} and \ref{chi-4.01-1}).

\begin{figure}[!ht]
\begin{center}
\subfigure[]{
\resizebox*{0.36\linewidth}{!}{\includegraphics{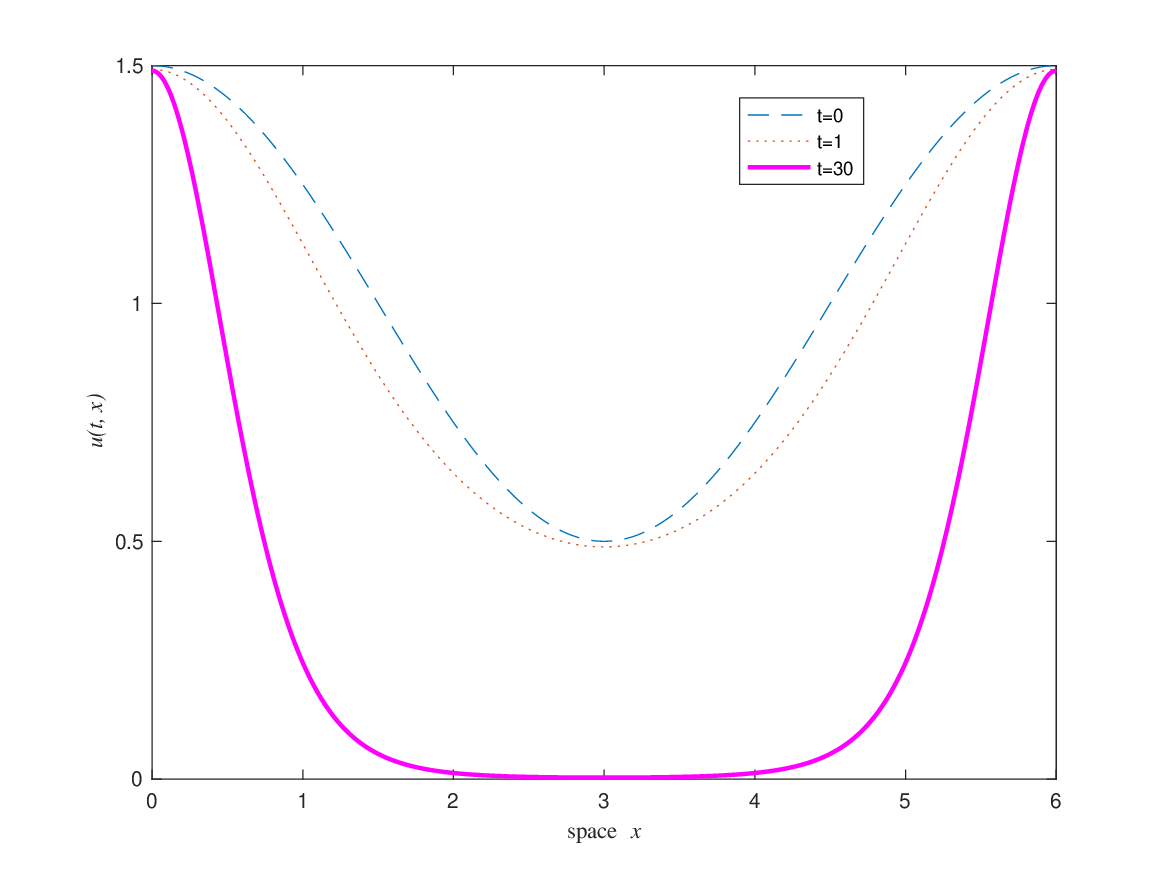}}
}
\subfigure[]{
\resizebox*{0.36\linewidth}{!}{\includegraphics{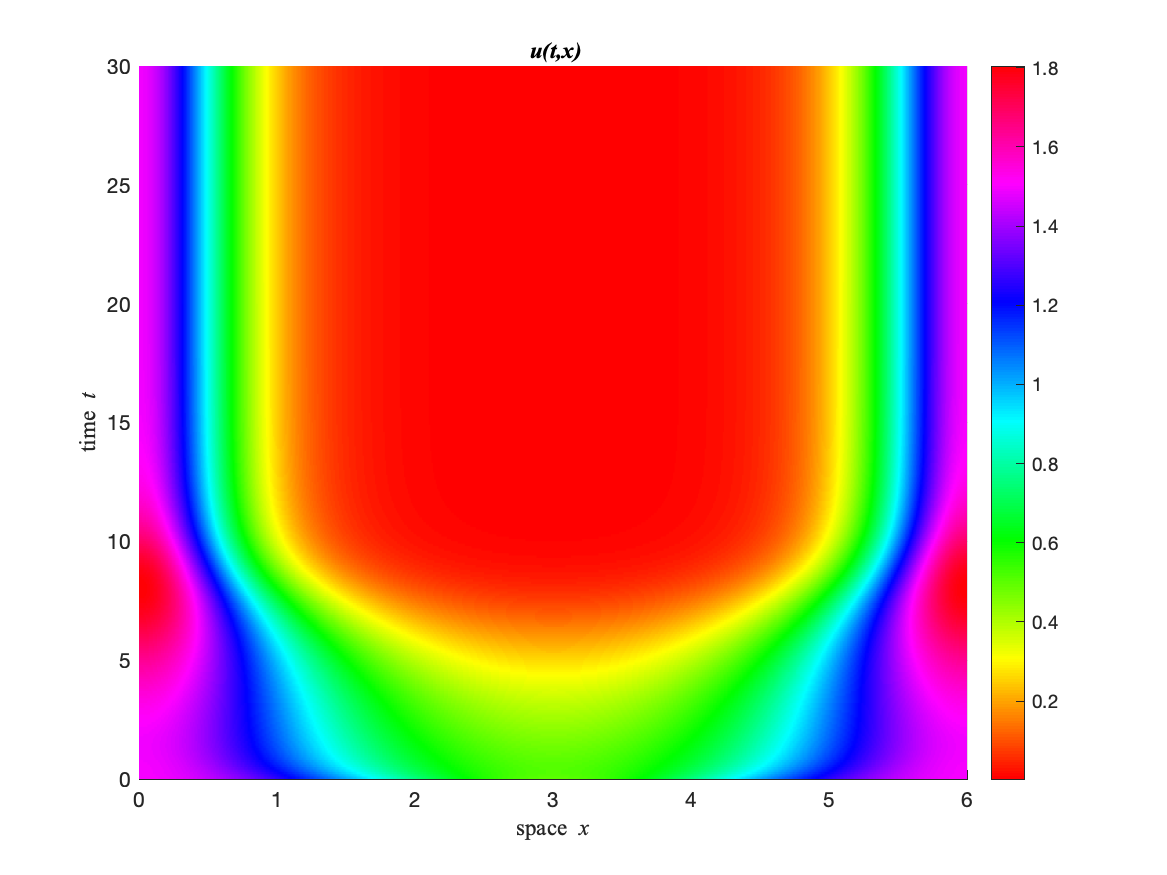}}\label{chi-4.01-L-6-u0-1+0.5cos0.3pix-u}}
%%%\subfigure[]{
%%%\resizebox*{0.24\linewidth}{!}{\includegraphics{chi-4.01-L-6-u0-1+0.5cos0.3pix-v}}
%%%\resizebox*{0.24\linewidth}{!}{\includegraphics{chi-4.01-L-6-u0-1+0.5cos0.3pix-v-2D}}\label{chi-4.01-L-6-u0-1+0.5cos0.3pix-v}}
%%%\subfigure[]{
%%%\resizebox*{0.24\linewidth}{!}{\includegraphics{chi-4.01-L-6-u0-1-0.5cos0.3pix-v}}
%%%\resizebox*{0.24\linewidth}{!}{\includegraphics{chi-4.01-L-6-u0-1-0.5cos0.3pix-v-2D}}\label{chi-4.01-L-6-u0-1-0.5cos0.3pix-v}}

%\subfigure[]{
%\resizebox*{0.24\linewidth}{!}{\includegraphics{para-elli-chi-40-u0-1-0.5cospix-u}}
%\resizebox*{0.24\linewidth}{!}{\includegraphics{para-elli-chi-40-u0-1-0.5cospix-u-2D}}}%}%\label{para-elli-chi-11.96-u0-1-0.5cos-u}}
%\subfigure[]{
%\resizebox*{0.24\linewidth}{!}{\includegraphics{para-elli-chi-40-u0-1-0.5cospix-v}}
%\resizebox*{0.24\linewidth}{!}{\includegraphics{para-elli-chi-40-u0-1-0.5cospix-v-2D}}}%\label{para-elli-chi-11.96-u0-1-0.5cos-v}}
\caption{(a) limit profile, (b) evolution of $u(t,x;u_0)$  with $\chi=4.01$, $a=b=\mu=\nu=1$, $L=6$ and initial function $u_0=1+0.5\cos(\frac{\pi x}{3})$
}
\label{chi-4.01}
\end{center}
\end{figure}

\begin{figure}[!ht]
\begin{center}
%%%\subfigure[]{
%%%\resizebox*{0.24\linewidth}{!}{\includegraphics{chi-4.01-L-6-u0-1+0.5cos0.3pix-v}}
%%%\resizebox*{0.24\linewidth}{!}{\includegraphics{chi-4.01-L-6-u0-1+0.5cos0.3pix-v-2D}}\label{chi-4.01-L-6-u0-1+0.5cos0.3pix-v}}
\subfigure[]{
\resizebox*{0.36\linewidth}{!}{\includegraphics{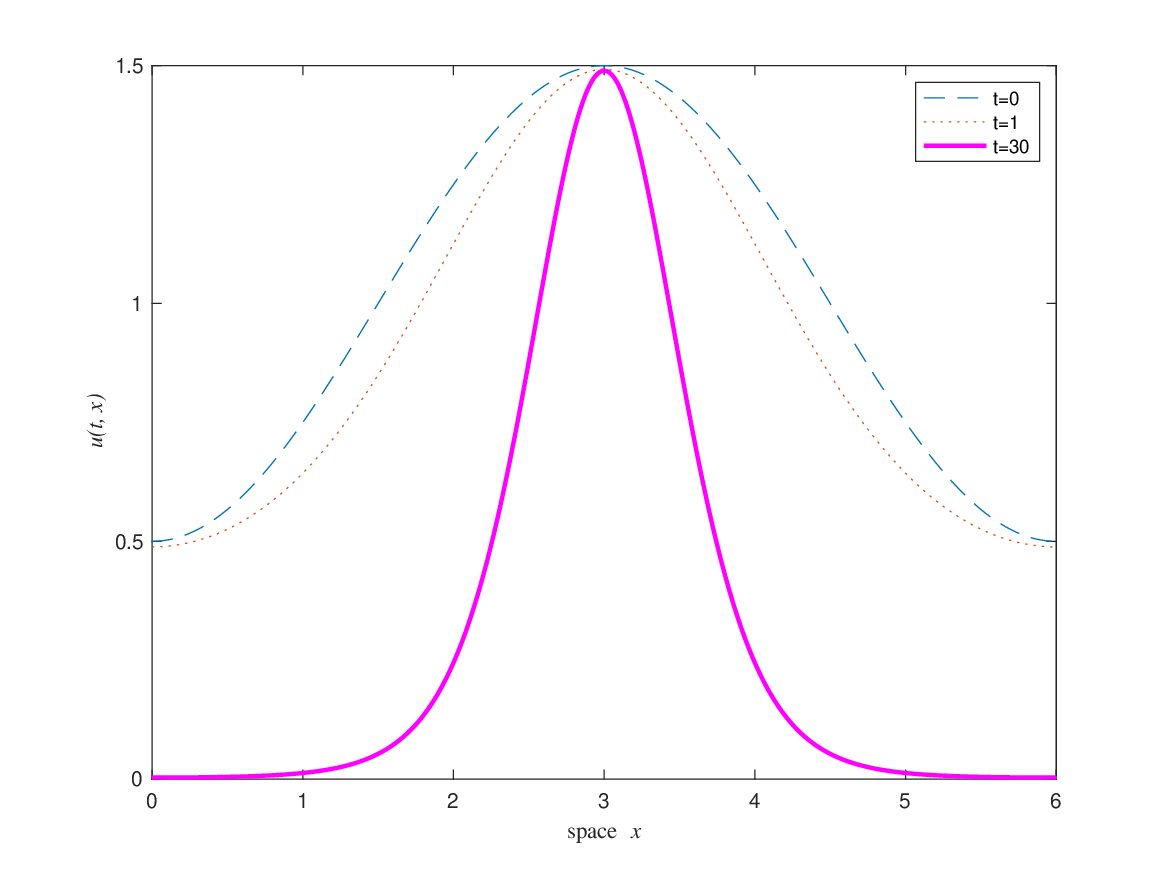}}
}
\subfigure[]{\resizebox*{0.36\linewidth}{!}{\includegraphics{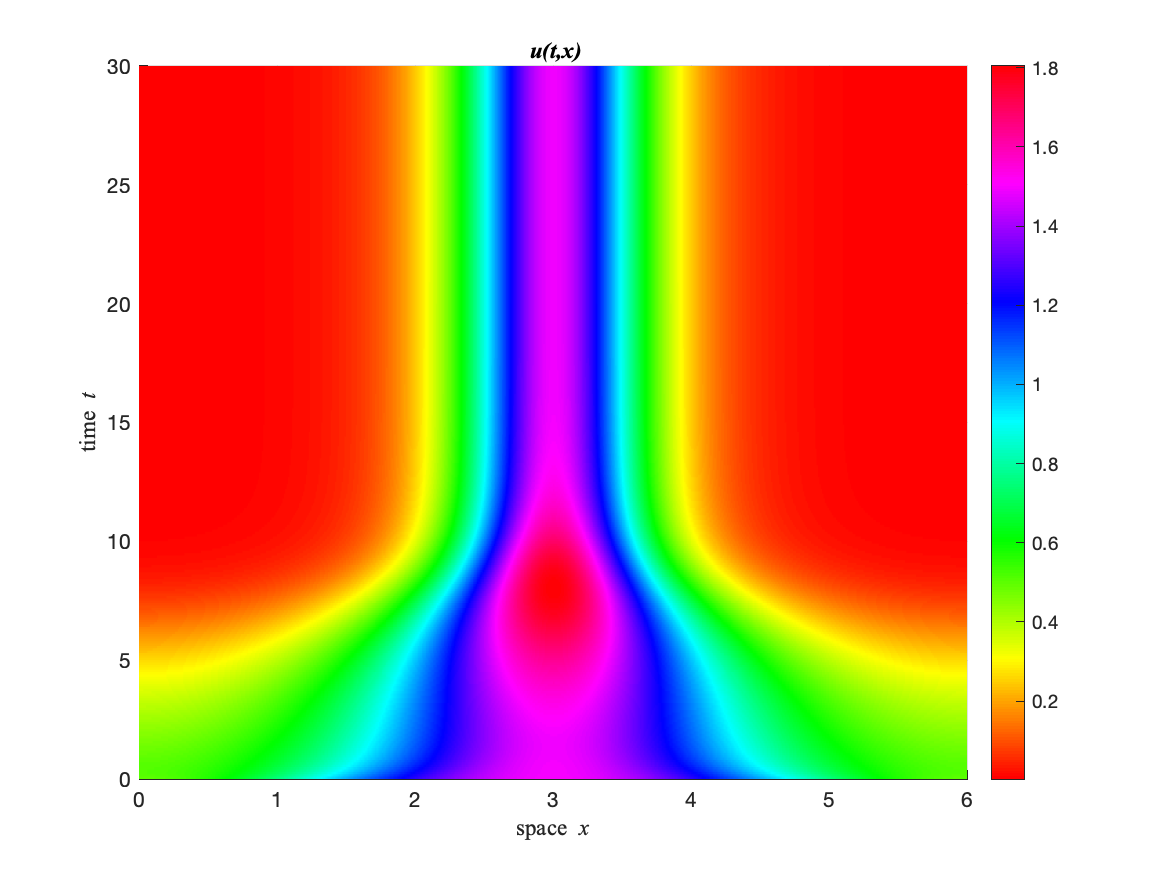}}\label{chi-4.01-L-6-u0-1-0.5cos0.3pix-u}}
%%%\subfigure[]{
%%%\resizebox*{0.24\linewidth}{!}{\includegraphics{chi-4.01-L-6-u0-1-0.5cos0.3pix-v}}
%%%\resizebox*{0.24\linewidth}{!}{\includegraphics{chi-4.01-L-6-u0-1-0.5cos0.3pix-v-2D}}\label{chi-4.01-L-6-u0-1-0.5cos0.3pix-v}}

%\subfigure[]{
%\resizebox*{0.24\linewidth}{!}{\includegraphics{para-elli-chi-40-u0-1-0.5cospix-u}}
%\resizebox*{0.24\linewidth}{!}{\includegraphics{para-elli-chi-40-u0-1-0.5cospix-u-2D}}}%}%\label{para-elli-chi-11.96-u0-1-0.5cos-u}}
%\subfigure[]{
%\resizebox*{0.24\linewidth}{!}{\includegraphics{para-elli-chi-40-u0-1-0.5cospix-v}}
%\resizebox*{0.24\linewidth}{!}{\includegraphics{para-elli-chi-40-u0-1-0.5cospix-v-2D}}}%\label{para-elli-chi-11.96-u0-1-0.5cos-v}}
\caption{(a) limit profile, (b)  evolution of $u(t,x;u_0)$  with $\chi=4.01$, $a=b=\mu=\nu=1$, $L=6$ and  initial function $u_0=1-0.5\cos(\frac{\pi x}{3})$
}
\label{chi-4.01-1}
\end{center}
\end{figure}

%Let $\chi=4.01$ which is slightly larger than the first bifurcation value.
%We observe that as time evolves, the numerical solution of $(u(t,x;u_0), v(t,x;u_0))$ changes very little (see Figure \ref{chi-4.01-L-6-u0-1+0.5cos0.3pix-u} and Figure \ref{chi-4.01-L-6-u0-1+0.5cos0.3pix-v}). This implies the convergence of the numerical solution towards the numerical nonconstant stationary solution (see the profile of numerical solution at $t=30$ for the profile of the numerical nonconstant stationary solution at Figure \ref{chi-4.01-L-6-u0-1+0.5cos0.3pix-u} and Figure \ref{chi-4.01-L-6-u0-1+0.5cos0.3pix-v}). Such solution corresponds to the analytical nonconstant stationary solution described in the second formula of \eqref{upper-branch-eq}. Next, let $u_0=1-0.5cos(\frac{\pi x}{3})$. We observe the convergence of the numerical solution towards another numerical nonconstant stationary solution (see the profile of numerical solution at $t=30$ for the profile of numerical nonconstant stationary solution at Figure \ref{chi-4.01-L-6-u0-1-0.5cos0.3pix-u} and Figure \ref{chi-4.01-L-6-u0-1-0.5cos0.3pix-v}) which corresponds to the analytical nonconstant stationary solution described in the second formula of \eqref{lower-branch-eq}.

Next, we increase the value of $\chi$. Let $\chi=10$, which is not close to the bifurcation value $\chi^*$.
  Let $u_0=1+0.5\cos(\frac{\pi x}{3})$. All the other parameters remain the same. We observe that as time evolves, the numerical solution of $u(t,x;u_0)$ converges to a double boundary spike solution (see Figure \ref{chi-10-L-6}). The profile of the numerical solution at time $t=10$ can be viewed as the profile of the numerical double boundary spike solution. Let $u_0=1-0.5\cos(\frac{\pi x}{3})$. We observe that the numerical solution of $u(t,x;u_0)$ converges to a single interior spike solution (see Figure \ref{chi-10-L-6-1}).

\begin{figure}[!ht]
\begin{center}
\subfigure[]{
\resizebox*{0.36\linewidth}{!}{\includegraphics{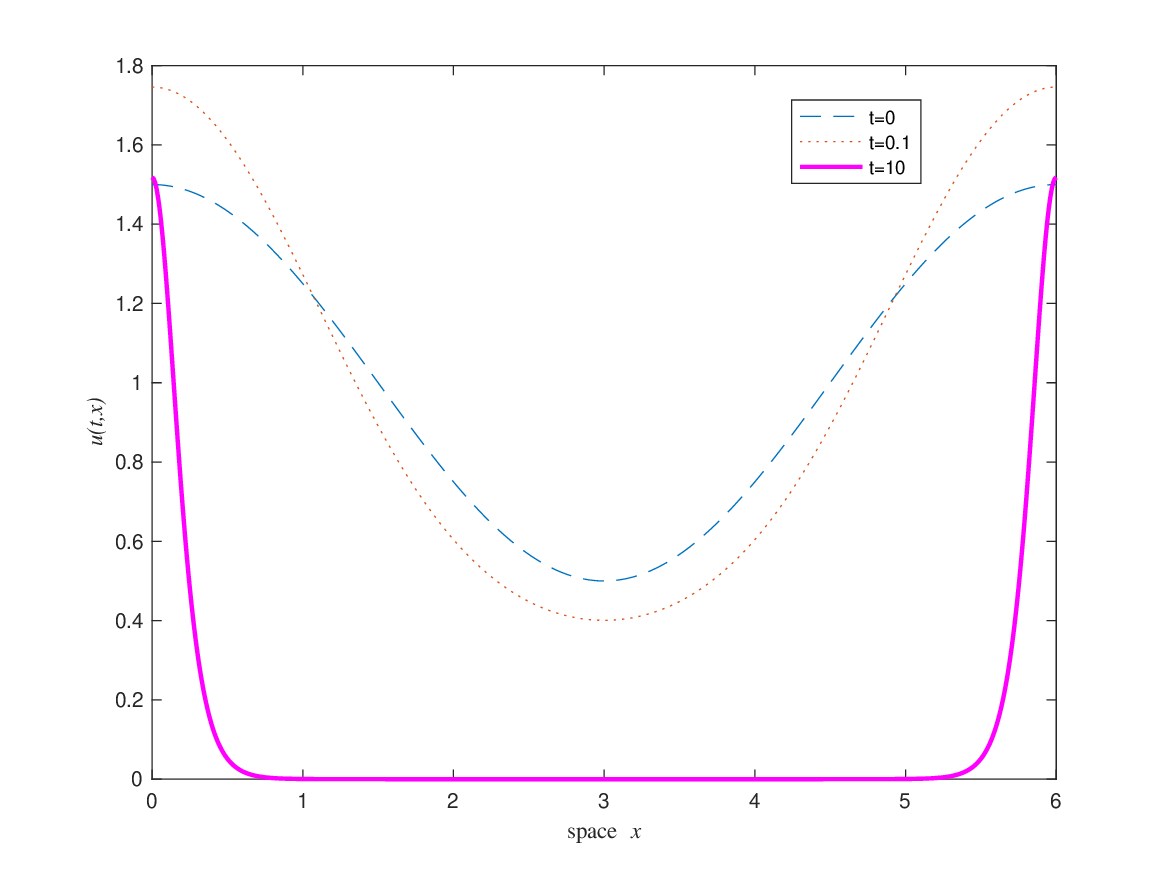}}
}
\subfigure[]{
\resizebox*{0.36\linewidth}{!}{\includegraphics{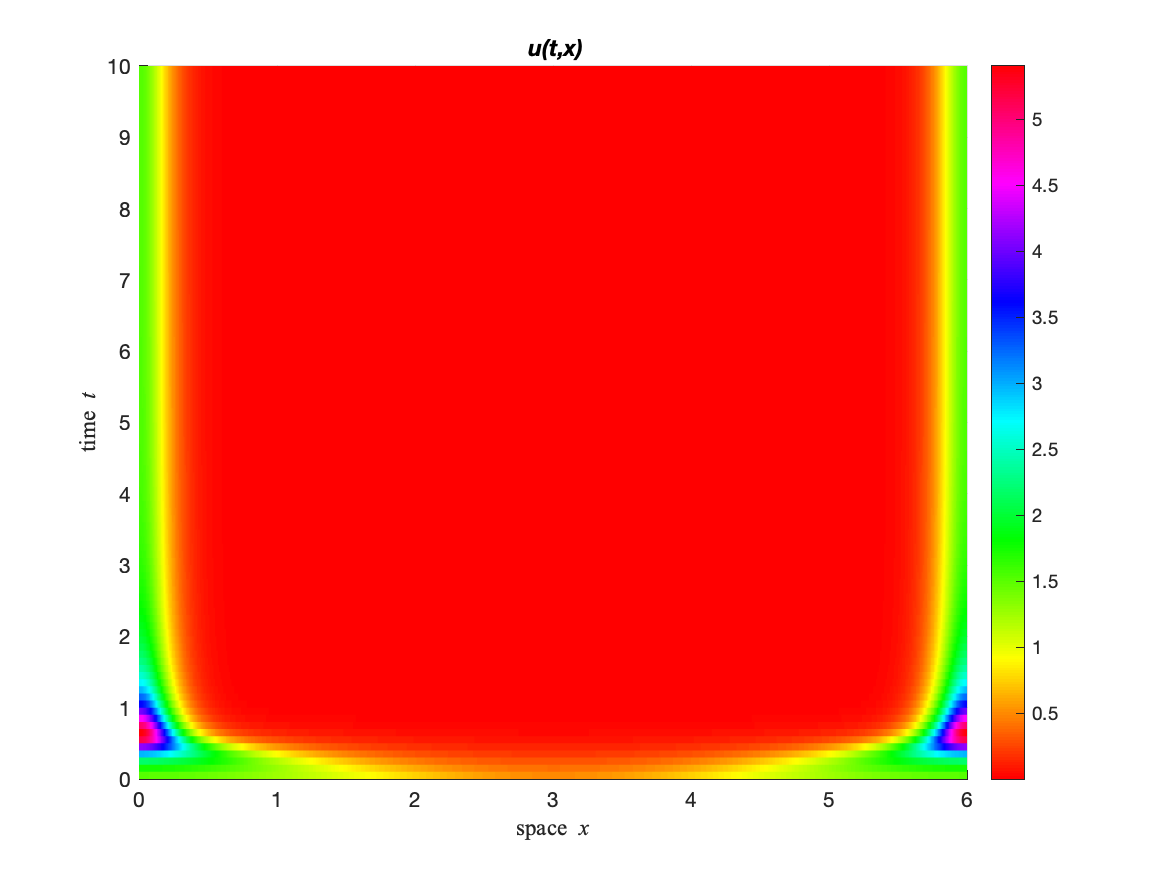}}\label{chi-10-L-6-u0-1+0.5cos0.3pix-u}}
%%%\subfigure[]{
%%%\resizebox*{0.24\linewidth}{!}{\includegraphics{chi-10-L-6-u0-1+0.5cos0.3pix-v}}
%%%\resizebox*{0.24\linewidth}{!}{\includegraphics{chi-10-L-6-u0-1+0.5cos0.3pix-v-2D}}\label{chi-10-L-6-u0-1+0.5cos0.3pix-v}}
%%%\subfigure[]{
%%%\resizebox*{0.24\linewidth}{!}{\includegraphics{chi-10-L-6-u0-1-0.5cos0.3pix-v}}
%%%\resizebox*{0.24\linewidth}{!}{\includegraphics{chi-10-L-6-u0-1-0.5cos0.3pix-v-2D}}\label{chi-10-L-6-u0-1-0.5cos0.3pix-v}}
\caption{(a) limit profile, (b)  evolution of $u(t,x;u_0)$  with $\chi=10$, $a=b=\mu=\nu=1$, $L=6$ and initial function $u_0=1+0.5\cos(\frac{\pi x}{3})$}
\label{chi-10-L-6}
\end{center}
\end{figure}

\begin{figure}[!ht]
\begin{center}
%%%\subfigure[]{
%%%\resizebox*{0.24\linewidth}{!}{\includegraphics{chi-10-L-6-u0-1+0.5cos0.3pix-v}}
%%%\resizebox*{0.24\linewidth}{!}{\includegraphics{chi-10-L-6-u0-1+0.5cos0.3pix-v-2D}}\label{chi-10-L-6-u0-1+0.5cos0.3pix-v}}
\subfigure[]{
\resizebox*{0.36\linewidth}{!}{\includegraphics{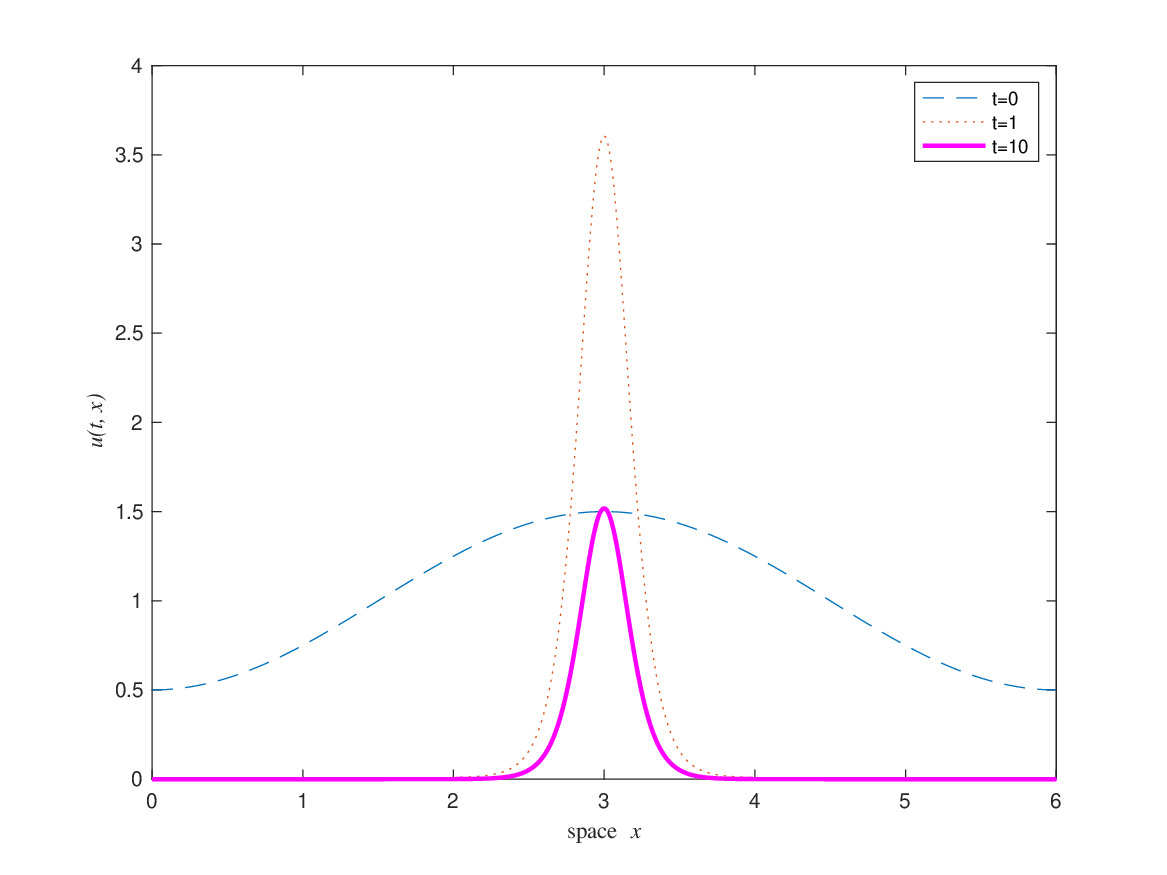}}
}
\subfigure[]{
\resizebox*{0.36\linewidth}{!}{\includegraphics{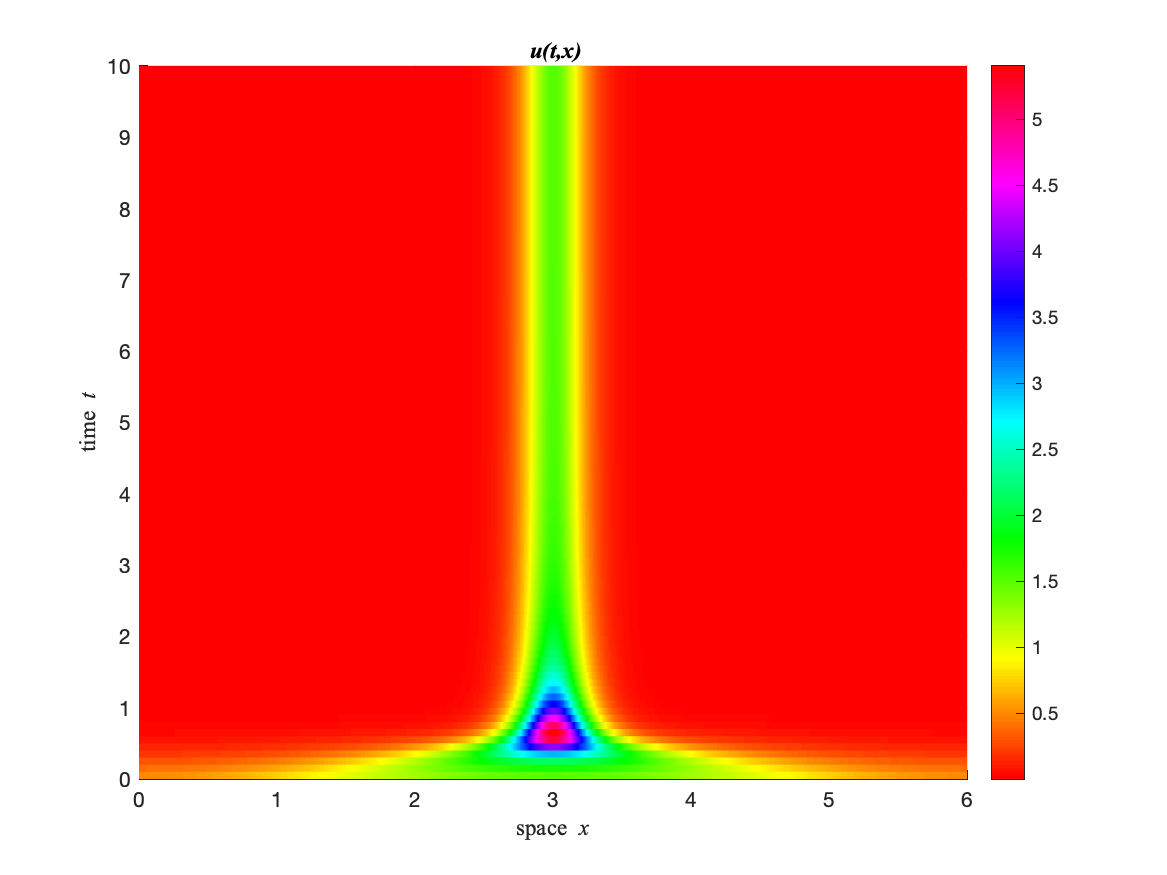}}\label{chi-10-L-6-u0-1-0.5cos0.3pix-u}}
%%%\subfigure[]{
%%%\resizebox*{0.24\linewidth}{!}{\includegraphics{chi-10-L-6-u0-1-0.5cos0.3pix-v}}
%%%\resizebox*{0.24\linewidth}{!}{\includegraphics{chi-10-L-6-u0-1-0.5cos0.3pix-v-2D}}\label{chi-10-L-6-u0-1-0.5cos0.3pix-v}}
\caption{(a) limit profile, (b) evolution of $u(t,x;u_0)$   with $\chi=10$, $a=b=\mu=\nu=1$, $L=6$ and initial function  $u_0=1-0.5\cos(\frac{\pi x}{3})$}
\label{chi-10-L-6-1}
\end{center}
\end{figure}

We further increase the value of $\chi$.  Let  $\chi=20$ and take $u_0=1\pm 0.5\cos(\frac{\pi x}{3})$. The same phenomenon as in the case $\chi=10$ is observed (see Figures \ref{chi-20-L-6} and \ref{chi-20-L-6-1}). In particular, we observe that when $\chi$ becomes larger, the numerical nonconstant stationary solution concentrates more towards both boundaries when $u_0=1+0.5\cos(\frac{\pi x}{3})$ and towards the middle of the location when $u_0=1-0.5\cos(\frac{\pi x}{3})$.

\begin{figure}[!ht]
\begin{center}
\subfigure[]{
\resizebox*{0.36\linewidth}{!}{\includegraphics{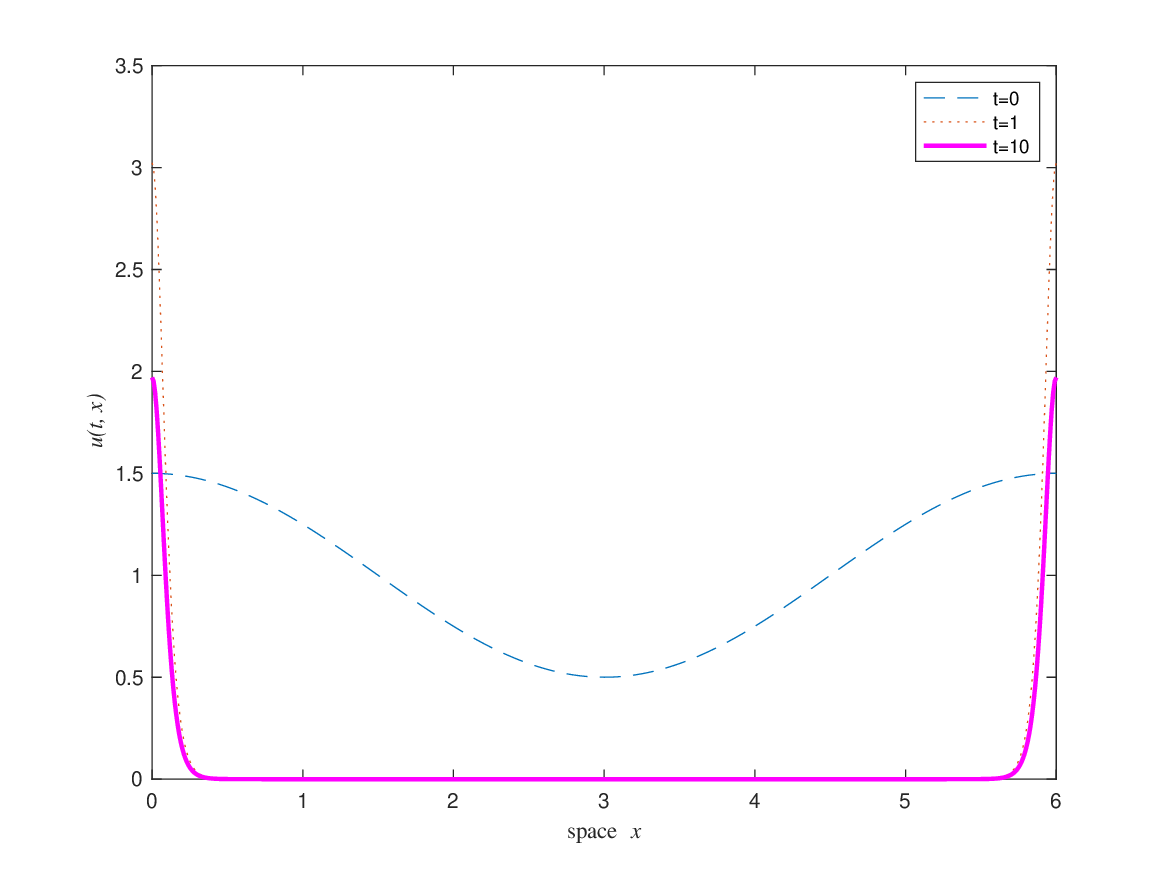}}
}
\subfigure[]{
\resizebox*{0.36\linewidth}{!}{\includegraphics{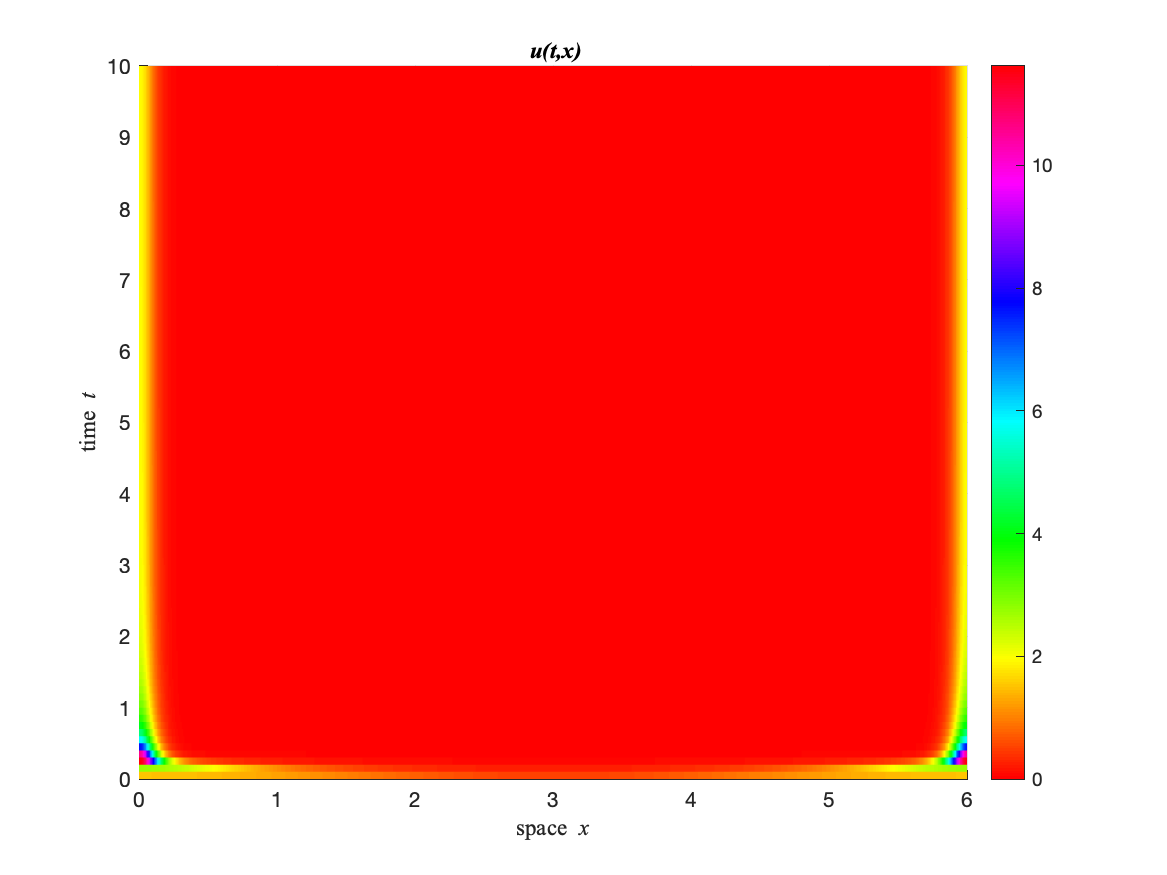}}\label{chi-20-L-6-u0-1+0.5cos0.3pix-u}}
%%%\subfigure[]{
%%%\resizebox*{0.24\linewidth}{!}{\includegraphics{chi-20-L-6-u0-1+0.5cos0.3pix-v}}
%%%\resizebox*{0.24\linewidth}{!}{\includegraphics{chi-20-L-6-u0-1+0.5cos0.3pix-v-2D}}\label{chi-20-L-6-u0-1+0.5cos0.3pix-v}}
%%%\subfigure[]{
%%%\resizebox*{0.24\linewidth}{!}{\includegraphics{chi-20-L-6-u0-1-0.5cos0.3pix-v}}
%%%\resizebox*{0.24\linewidth}{!}{\includegraphics{chi-20-L-6-u0-1-0.5cos0.3pix-v-2D}}\label{chi-20-L-6-u0-1-0.5cos0.3pix-v}}
\caption{(a) limit profile, (b)  evolution of $u(t,x;u_0)$   with $\chi=20$, $a=b=\mu=\nu=1$, $L=6$ and initial function $u_0=1+0.5\cos(\frac{\pi x}{3})$}
\label{chi-20-L-6}
\end{center}
\end{figure}

\begin{figure}[!ht]
\begin{center}
%%%\subfigure[]{
%%%\resizebox*{0.24\linewidth}{!}{\includegraphics{chi-20-L-6-u0-1+0.5cos0.3pix-v}}
%%%\resizebox*{0.24\linewidth}{!}{\includegraphics{chi-20-L-6-u0-1+0.5cos0.3pix-v-2D}}\label{chi-20-L-6-u0-1+0.5cos0.3pix-v}}
\subfigure[]{
\resizebox*{0.36\linewidth}{!}{\includegraphics{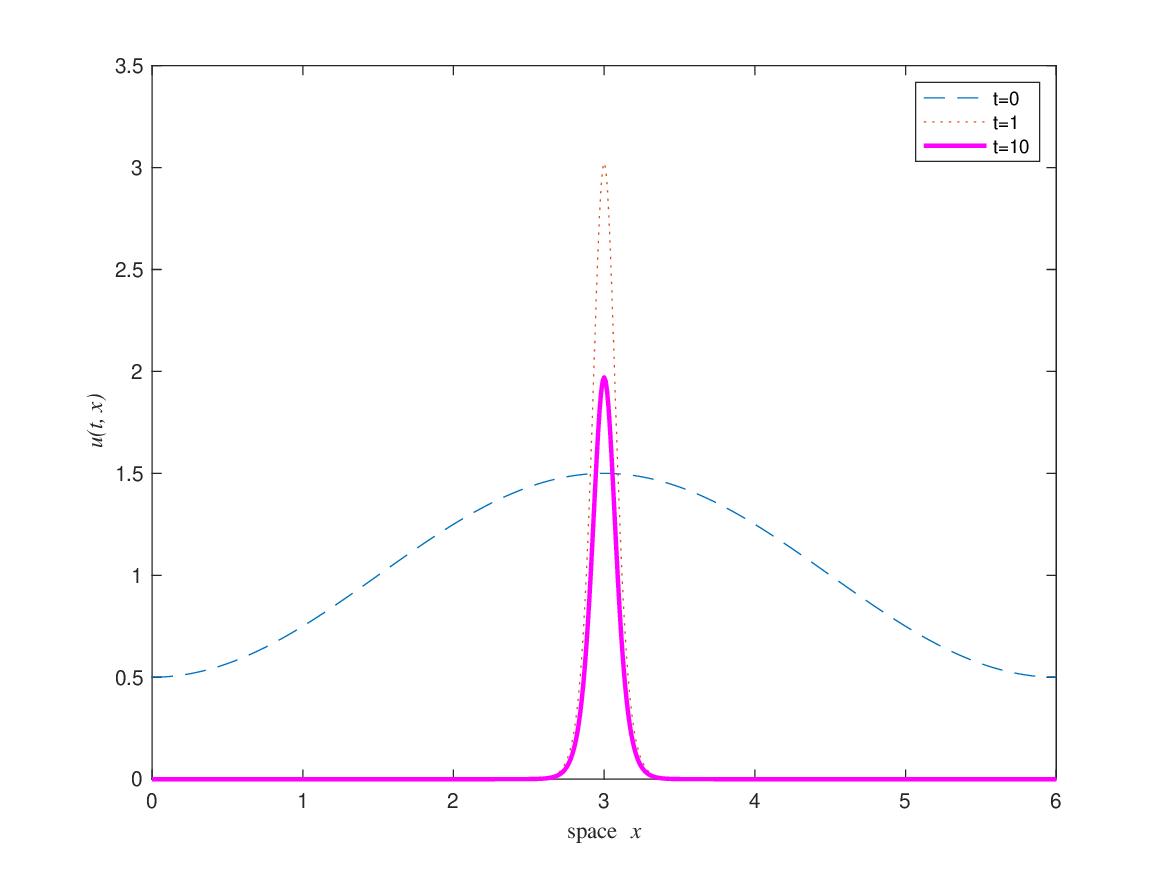}}
}
\subfigure[]{
\resizebox*{0.36\linewidth}{!}{\includegraphics{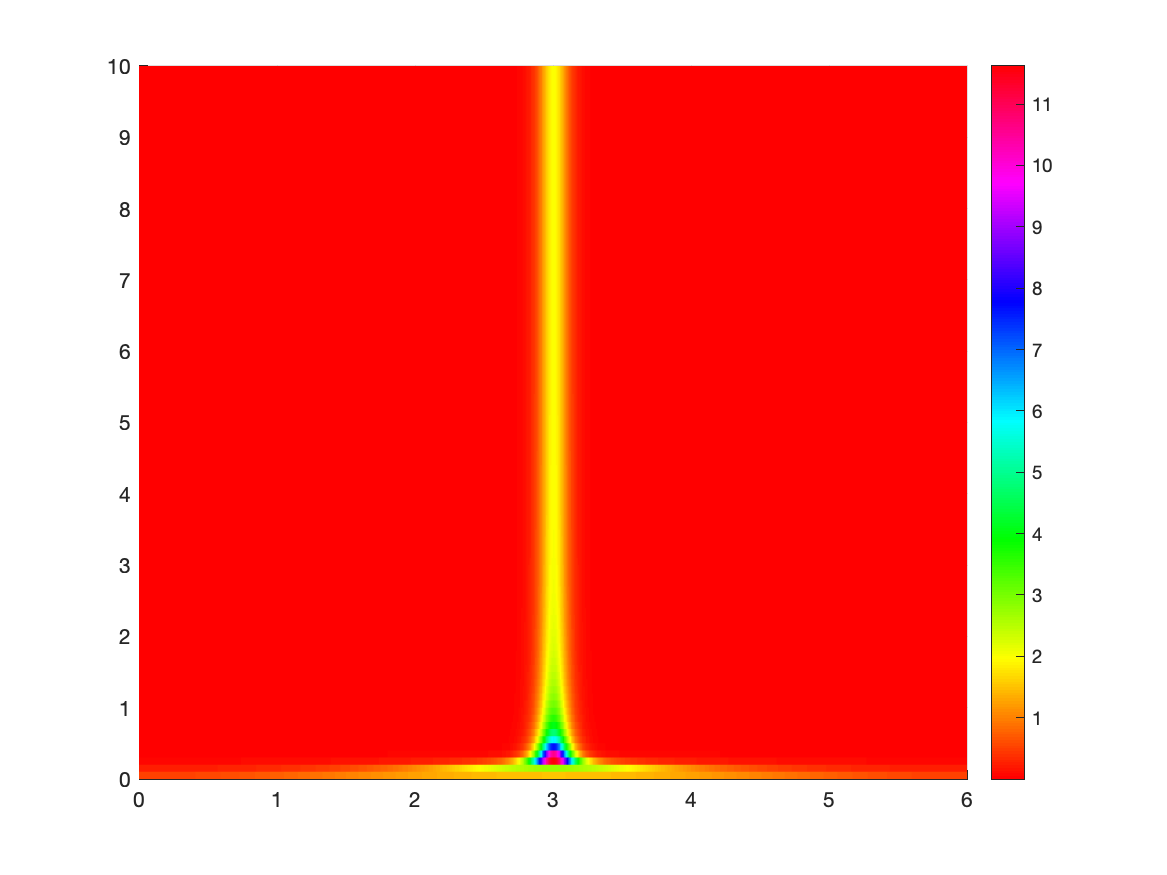}}\label{chi-20-L-6-u0-1-0.5cos0.3pix-u}}
%%%\subfigure[]{
%%%\resizebox*{0.24\linewidth}{!}{\includegraphics{chi-20-L-6-u0-1-0.5cos0.3pix-v}}
%%%\resizebox*{0.24\linewidth}{!}{\includegraphics{chi-20-L-6-u0-1-0.5cos0.3pix-v-2D}}\label{chi-20-L-6-u0-1-0.5cos0.3pix-v}}
\caption{(a) limit profile, (b) evolution of $u(t,x;u_0)$  with $\chi=20$, $a=b=\mu=\nu=1$, $L=6$ and initial function  $u_0=1-0.5\cos(\frac{\pi x}{3})$}
\label{chi-20-L-6-1}
\end{center}
\end{figure}

\noindent {\bf Observation from experiment 4.} As it is mentioned in the observation of experiment 3,   sub-critical pitchfork bifurcation occurs when
$\chi$ passes through $\chi^*$,  that is, there are two unstable nonconstant stationary solutions bifurcating from $(1,1)$ for $\chi<\chi^*$ and $\chi$ is near $\chi^*$. It is observed from the experiment 4  that
the local pitchfork bifurcation branch extends to $\chi=\infty$ and the bifurcation solutions
on the extended branch are locally stable when $\chi>\chi^*$. Moreover, when $\chi$ increases, the $u$-component of the bifurcation solutions either develops spikes
near both boundary points $x=0$ and $x=1$, or develops an interior spike.

\subsection{Numerical simulations for the case $a=2$, $b=\nu=1$, $\mu=3$, $L=6$}
\label{a-2-l-6-sec}

%{\rd (W.S. modify this subsection as two previous subsections. Note that
%for $\tilde u_0=2-0.5\cos(\frac{\pi x}{2})$ and $u_0=2+\cos(\frac{\pi x}{2})$,
%we have $\tilde u_0(x)=u_0(6-x)$. Hence $u(t,x;\tilde u_0)=u(t,6-x;u_0)$. So you many only compute the solutions with $u_0=2+0.5\cos(\frac{\pi x}{2})$.)}

In this subsection, we discuss the numerical simulations we carried out for the case that
$a=2$, $b=\nu=1$, $\mu=3$,  and $L=6$. In this case, it is known that $\chi^*=\chi_3^*\approx 3.2997$;
$(\frac{a}{b},\frac{\nu}{\mu}\frac{a}{b})=(2, \frac{2}{3})$ is locally  asymptotically stable when
$0<\chi<\chi^*$; and when $\chi$ passes through $\chi^*$,
subcritical pitchfork bifurcation occurs.
When $\chi\approx \chi^*$, similar dynamical scenarios  as in the case $a=b=\mu=\nu=1$ and $L=6$ are observed numerically. To be more precise,
 it is observed numerically  that when $\chi<\chi^*$ and $\chi$ is near $\chi^*$,
the constant stationary solution $(2,\frac{2}{3})$ is not globally stable and
there are other locally stable nonconstant stationary solutions, which are on the extension of
the local pitchfork bifurcation branch.  Moreover, in this case, it is  observed numerically that, when $\chi$ increases, the $u$-component of the bifurcation solution develops both boundary and interior spikes.

For example,   let $\chi=3.3$ which is slightly larger than {the bifurcation value $\chi^*$}.
Let $u_0=2\pm 0.5\cos(\frac{\pi x}{2})$.  We observe that for each initial function, the numerical solution of $(u(t,x;u_0)$, $v(t,x;u_0))$ changes very little when time is large enough and converges to nonconstant stationary solution, which is not so close to the constant stationary solution $(2,\frac{2}{3})$
 (see Figure \ref{chi-3.3-L-6-u0-2+0.5cos0.5pix}).

\begin{figure}[!ht]
\begin{center}
\subfigure[]{
\resizebox*{0.36\linewidth}{!}{\includegraphics{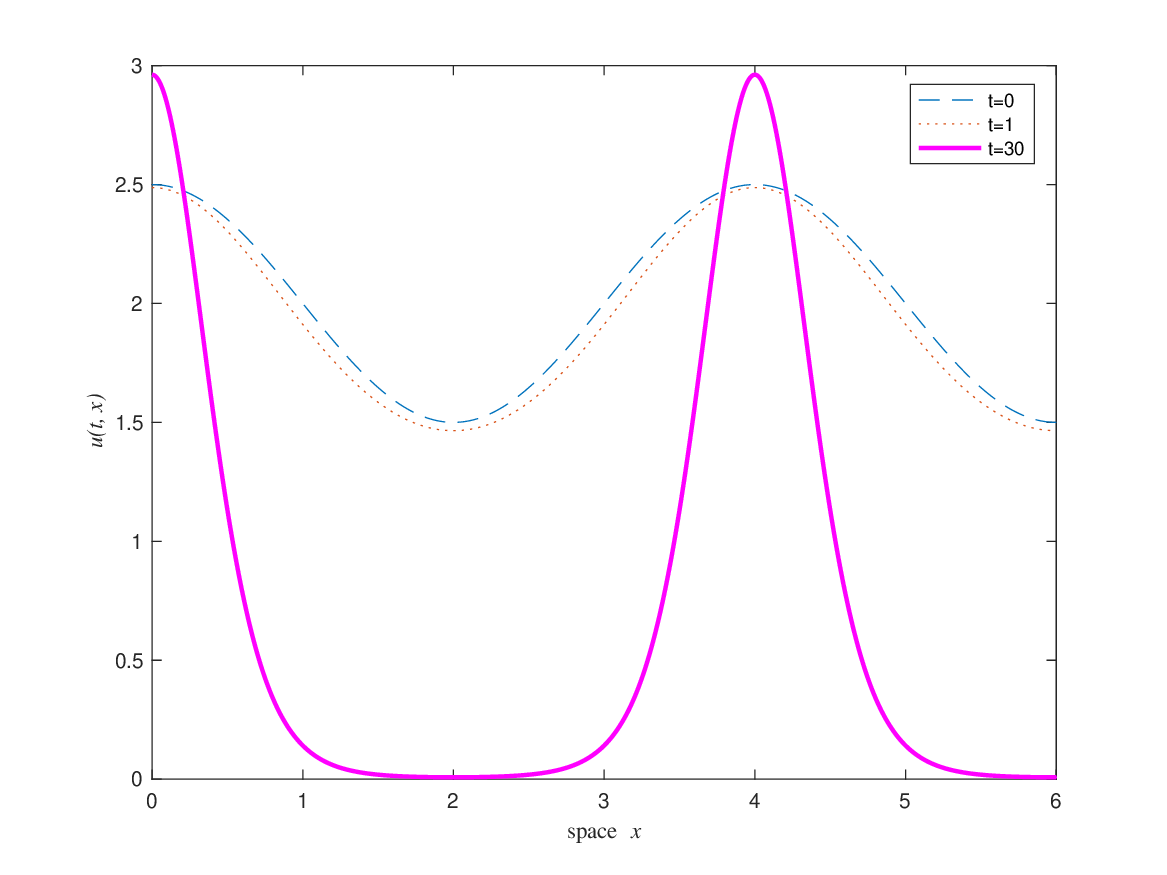}}
}
\subfigure[]{
\resizebox*{0.36\linewidth}{!}{\includegraphics{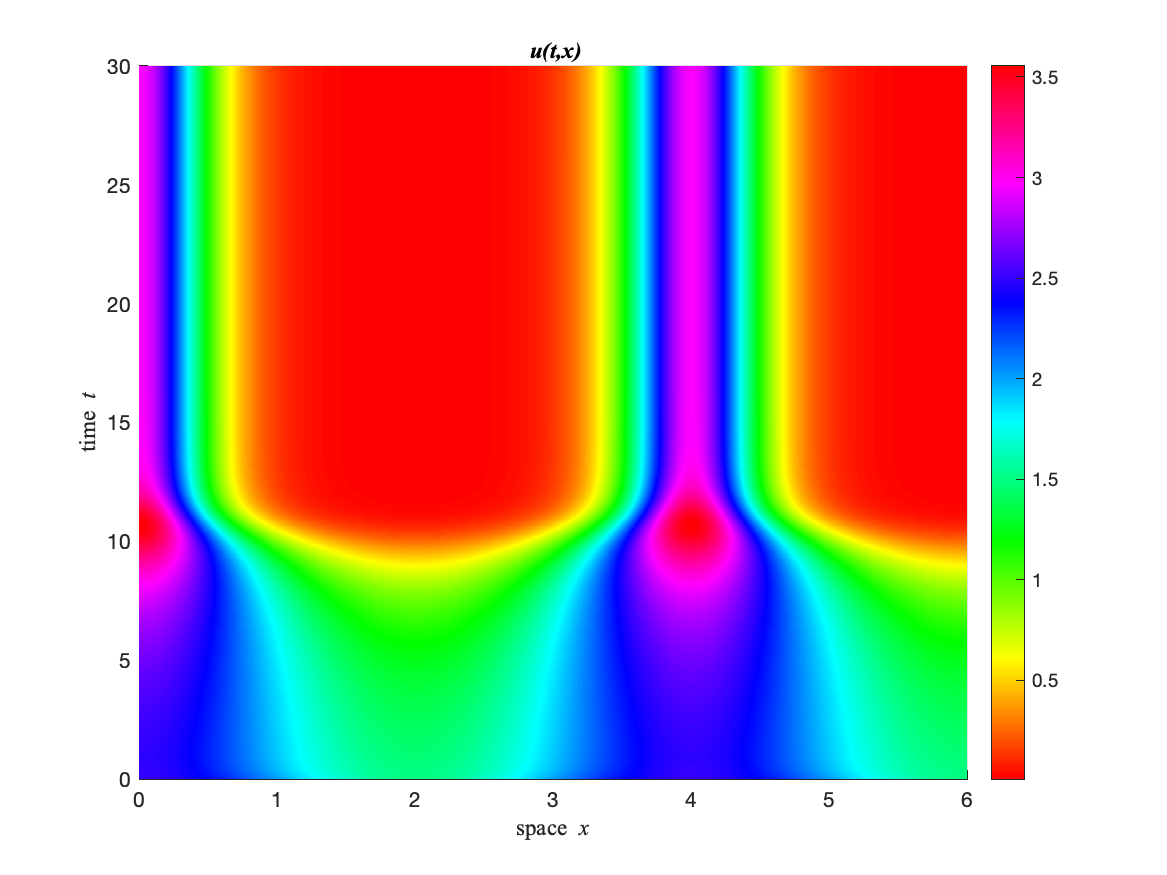}}\label{chi-10-L-6-u0-2+0.5cos0.5pix-u}}
%%%\subfigure[]{
%%%\resizebox*{0.24\linewidth}{!}{\includegraphics{chi-3.3-L-6-u0-2+0.5cos0.5pix-v}}
%%%\resizebox*{0.24\linewidth}{!}{\includegraphics{chi-3.3-L-6-u0-2+0.5cos0.5pix-v-2D}}\label{chi-10-L-6-u0-2+0.5cos0.5pix-v}}
%\subfigure[]{
%\resizebox*{0.24\linewidth}{!}{\includegraphics{chi-10-L-6-u0-2-0.5cos0.5pix-u}}
%\resizebox*{0.24\linewidth}{!}{\includegraphics{chi-10-L-6-u0-2-0.5cos0.5pix-u-2D}}\label{chi-10-L-6-u0-2-0.5cos0.5pix-u}}
%\subfigure[]{
%\resizebox*{0.24\linewidth}{!}{\includegraphics{chi-10-L-6-u0-2-0.5cos0.5pix-v}}
%\resizebox*{0.24\linewidth}{!}{\includegraphics{chi-10-L-6-u0-2-0.5cos0.5pix-v-2D}}\label{chi-10-L-6-u0-2-0.5cos0.5pix-v}}
\caption{(a) limit profile, (b) evolution of $u(t,x;u_0)$ with $\chi=3.3$,  $a=2$, $b=\nu=1$, $\mu=3$, $L=6$ and initial function $u_0=2+0.5\cos(\frac{\pi x}{2})$}
\label{chi-3.3-L-6-u0-2+0.5cos0.5pix}
\end{center}
\end{figure}

 Let $\chi=10$, which is not close to the bifurcation value $\chi^*$.
  Let $u_0=2+0.5\cos(\frac{\pi x}{2})$. All the other parameters remain the same. We observe that as time evolves, the numerical solution of $u(t,x;u_0)$ converges to a spike solution which has boundary spike at $x=0$ and interior spike at $x=4$ (see
Figure \ref{chi-10-L-6-u0-2+0.5cos0.5pix}).

\begin{figure}[!ht]
\begin{center}
\subfigure[]{
\resizebox*{0.36\linewidth}{!}{\includegraphics{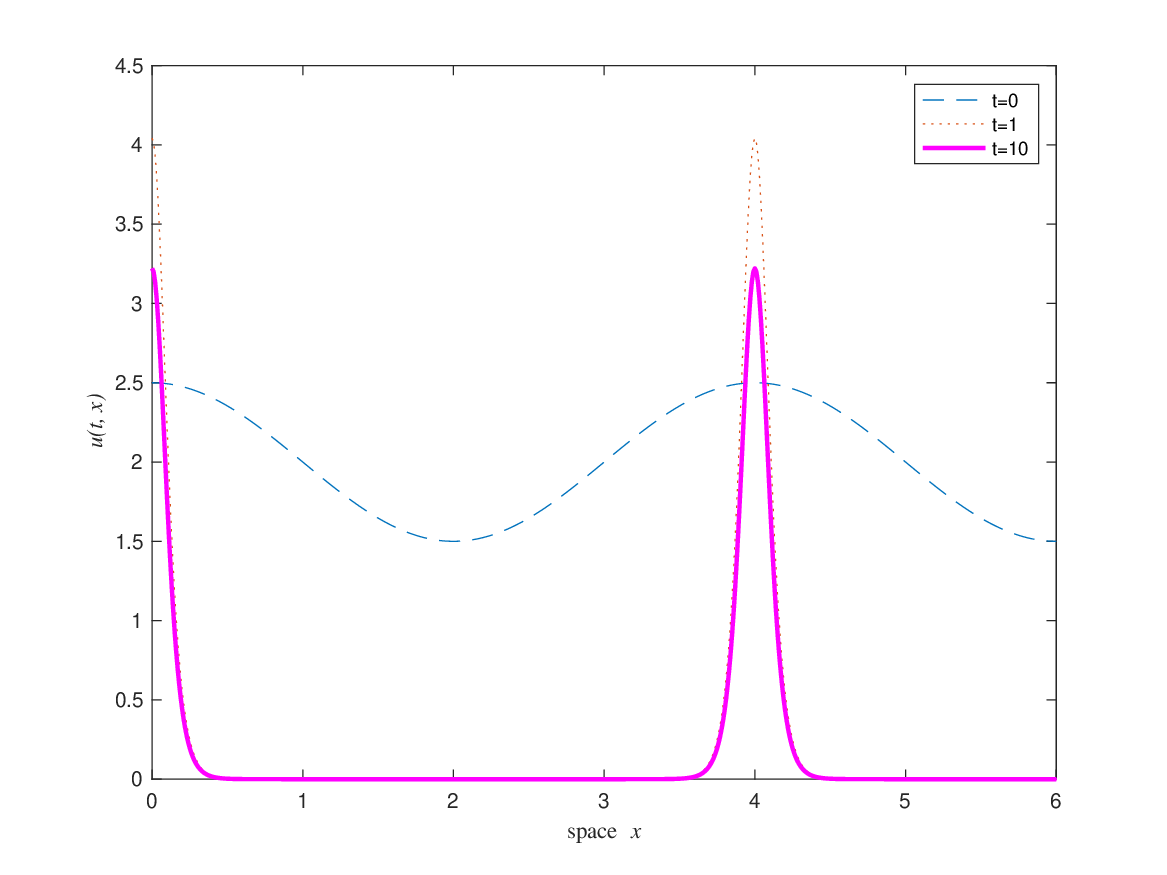}}
}
\subfigure[]{
\resizebox*{0.36\linewidth}{!}{\includegraphics{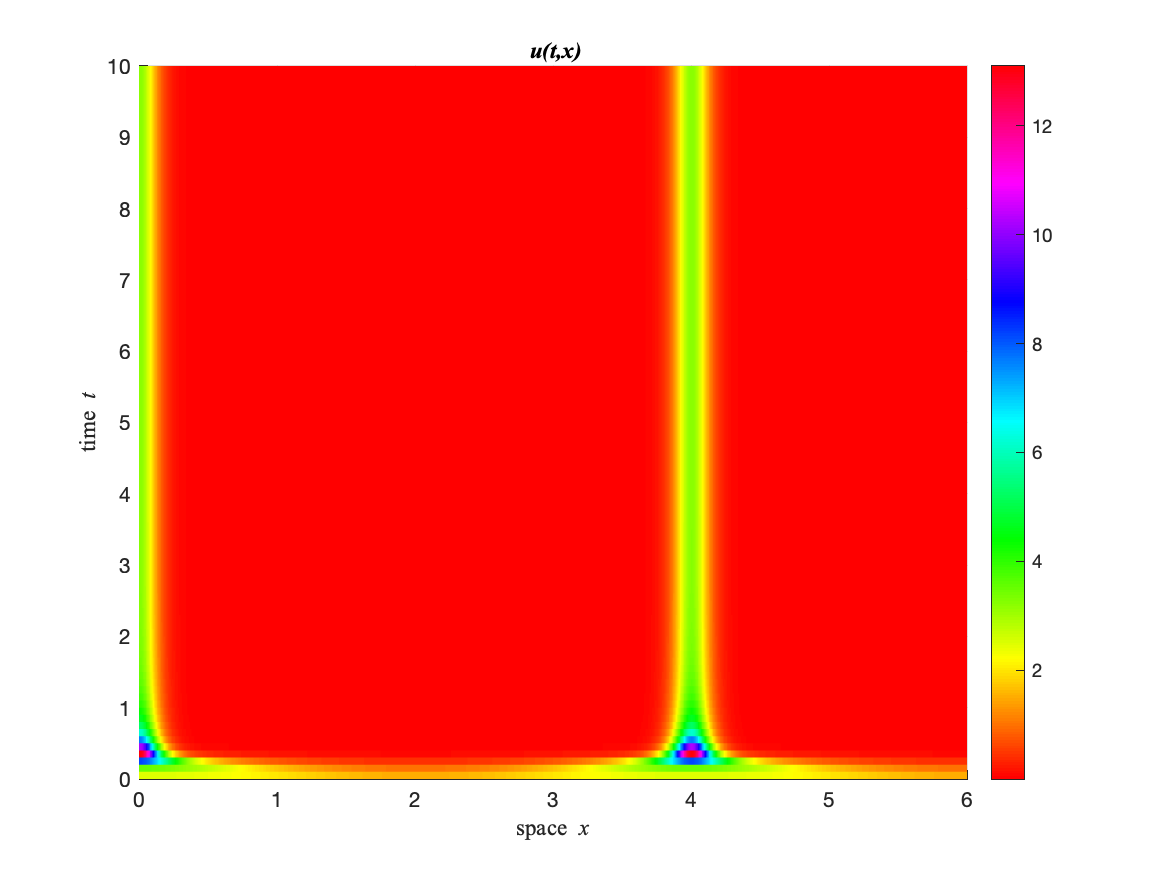}}\label{chi-10-L-6-u0-2+0.5cos0.5pix-u}}
%%%\subfigure[]{
%%%\resizebox*{0.24\linewidth}{!}{\includegraphics{chi-10-L-6-u0-2+0.5cos0.5pix-v}}
%%%\resizebox*{0.24\linewidth}{!}{\includegraphics{chi-10-L-6-u0-2+0.5cos0.5pix-v-2D}}\label{chi-10-L-6-u0-2+0.5cos0.5pix-v}}
%\subfigure[]{
%\resizebox*{0.24\linewidth}{!}{\includegraphics{chi-10-L-6-u0-2-0.5cos0.5pix-u}}
%\resizebox*{0.24\linewidth}{!}{\includegraphics{chi-10-L-6-u0-2-0.5cos0.5pix-u-2D}}\label{chi-10-L-6-u0-2-0.5cos0.5pix-u}}
%\subfigure[]{
%\resizebox*{0.24\linewidth}{!}{\includegraphics{chi-10-L-6-u0-2-0.5cos0.5pix-v}}
%\resizebox*{0.24\linewidth}{!}{\includegraphics{chi-10-L-6-u0-2-0.5cos0.5pix-v-2D}}\label{chi-10-L-6-u0-2-0.5cos0.5pix-v}}
\caption{(a) limit profile, (b) evolution of $u(t,x;u_0)$ with $\chi=10$,  $a=2$, $b=\nu=1$, $\mu=3$,   $L=6$ and initial function $u_0=2+0.5\cos(\frac{\pi x}{2})$. %((a) and (b)) or initial function $u_0=2-0.5\cos(\frac{\pi x}{2})$ ((c) and (d)).
}
\label{chi-10-L-6-u0-2+0.5cos0.5pix}
\end{center}
\end{figure}

  Let  $\chi=20$ and take $u_0=2+ 0.5\cos(\frac{\pi x}{2})$. The same phenomenon as in the case $\chi=10$ is observed  (see Figure \ref{chi-20-L-6-00}).
  In particular, we observe that when $\chi$ becomes larger, the numerical nonconstant stationary solution concentrates more towards the boundary at $x=0$ and the interior at $x=4$.

\begin{figure}[!ht]
\begin{center}
\subfigure[]{
\resizebox*{0.36\linewidth}{!}{\includegraphics{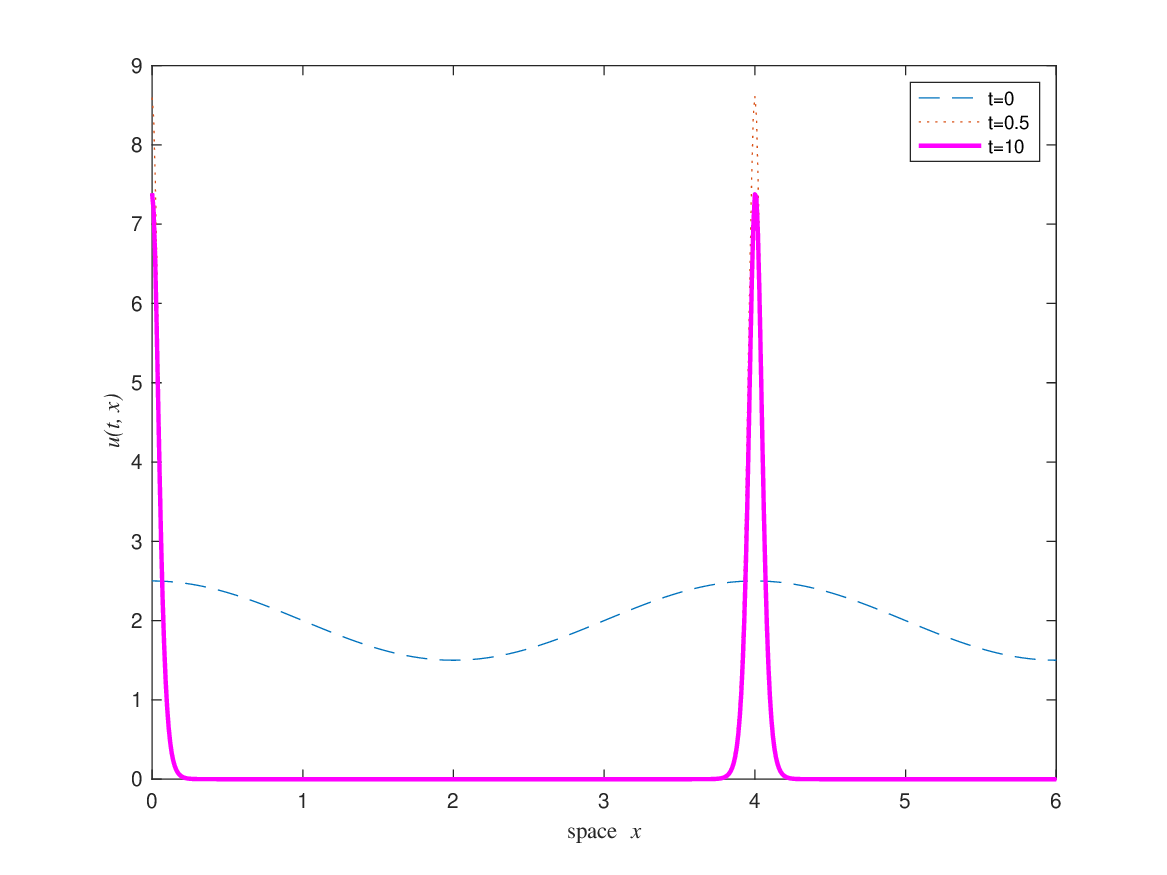}}
}
\subfigure[]{
\resizebox*{0.36\linewidth}{!}{\includegraphics{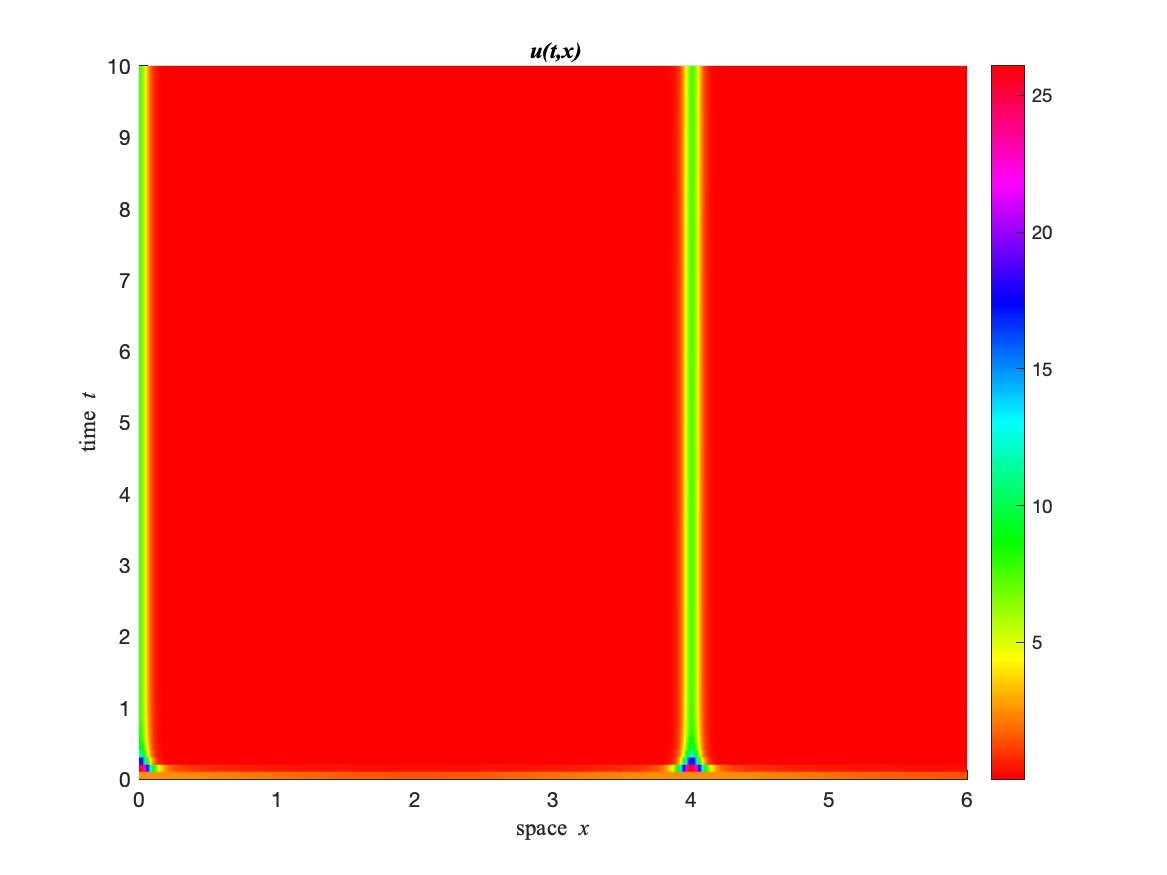}}
}
%%%\label{chi-20-L-6-u0-2+0.5cos0.5pix-u}}
%%%\subfigure[]{
%%%\resizebox*{0.24\linewidth}{!}{\includegraphics{chi-20-L-6-u0-2+0.5cos0.5pix-v}}
%%%\resizebox*{0.24\linewidth}{!}{\includegraphics{chi-20-L-6-u0-2+0.5cos0.5pix-v-2D}}\label{chi-20-L-6-u0-2+0.5cos0.5pix-v}}
\caption{(a) limit profile, (b) evolution of $u(t,x;u_0)$   with $\chi=20$,  $a=2$, $b=\nu=1$, $\mu=3$,  $L=6$ and initial function $u_0=2+0.5\cos(\frac{\pi x}{2})$.}
\label{chi-20-L-6-00}
\end{center}
\end{figure}

\end{document}